\newtheorem{Theorem}{Theorem}[section]
\newtheorem{Lemma}[Theorem]{Lemma}
\newtheorem{Definition}[Theorem]{Definition}
\newtheorem{Example}[Theorem]{Example}
\newtheorem{Remark}[Theorem]{Remark}
\begin{document}
\title{T-systems, Y-systems, and cluster algebras:\\
Tamely laced case}

\author{Tomoki Nakanishi}

\address{
 Graduate School of Mathematics, Nagoya University,
Nagoya, 464-8604, Japan\\
E-mail: nakanisi@math.nagoya-u.ac.jp}

\address{Dedicated to Professor Tetsuji Miwa on his 60th birthday}

\begin{abstract}
The T-systems and Y-systems are
classes of algebraic relations
originally associated with quantum affine algebras
and Yangians.
Recently they were generalized to
quantum affinizations
of quantum Kac-Moody algebras
associated with a wide class of
generalized Cartan matrices which
we say tamely laced.
Furthermore, in the simply laced case,
and also in the nonsimply laced case of finite type,
they were identified with
relations arising from cluster algebras.
In this note
we generalize such an  identification
to  any tamely laced Cartan matrices,
especially to the nonsimply laced ones
of nonfinite type.
\end{abstract}

\keywords{T-systems; Y-systems;  quantum groups; cluster algebras}

\bodymatter

\section{Introduction}

The T-systems and Y-systems appear in various aspects
for integrable
systems.
Originally, the T-systems are systems of relations
among the Kirillov-Reshetikhin modules
in the Grothendieck rings of modules over
quantum affine algebras and Yangians.
The T and Y-systems are related to each other by certain changes
of variables.
See, for example, Ref.~\refcite{Inoue09} and references therein
for more information and background.

Let $I=\{1,\dots,r\}$
and let $C=(C_{ij})_{i,j \in I}$ be a 
{\em (generalized) Cartan matrix\/} in
Ref.~\refcite{Kac90};
namely, it satisfies
$C_{ij}\in \mathbb{Z}$, $C_{ii}=2$, $C_{ij}\leq 0$ for 
any $i\neq j$, and $C_{ij}=0$ if and only if $C_{ji}=0$.
We assume that $C$ is {\em symmetrizable\/},
i.e., there is a diagonal matrix $D=\mathrm{diag}
(d_1,\dots,d_r)$ with $d_i\in \mathbb{N}:=\mathbb{Z}_{> 0}$
such that $B=DC$ is symmetric.
We always assume that there is no common divisor
for $d_1,\dots,d_r$ except for 1.
Following Ref.~\refcite{Kuniba09}, we say that a
Cartan matrix $C$ is {\em tamely laced\/} if
it is symmetrizable and satisfies the following condition
due to Hernandez\cite{Hernandez07a}:
\begin{align}
\label{eq:Ccond1}
\mbox{If $C_{ij}< -1$, then $d_i=-C_{ji}=1$.}
\end{align}

Recently, the T-systems were generalized by Hernandez \cite{Hernandez07a}
to the quantum affinizations 
of the quantum Kac-Moody algebras
associated with {\em tamely laced\/} Cartan matrices.
Subsequently, the corresponding Y-systems were also introduced by
Kuniba, Suzuki, and the author \cite{Kuniba09}.

Remarkably, these T and Y-systems are identified with
(a part of) relations among the variables
for {\em cluster algebras}\cite{Fomin02,Fomin07},
which are a class of commutative algebras
closely related to the representation theory of quivers.
For the T and Y-systems associated with
{\em simply laced\/} Cartan matrices  of {\em finite type},
this identification is a topic intensively studied by various authors
recently with several reasons (periodicity, categorification,
positivity, dilogarithm identities, etc.)
 \cite{Fomin03b,Fomin07,DiFrancesco09a,Keller08,
Inoue09,Hernandez09,Nakajima09,DiFrancesco09b,Nakanishi09,
Keller10,Inoue10a,
Inoue10b}.
In Ref.~\refcite{Kuniba09}, 
such an identification
 was generalized to the {\em simply laced\/}
 Cartan matrices.
In Refs.~\refcite{Inoue10a} and \refcite{Inoue10b},
it was  also extended  to the {\em nonsimply laced\/} Cartan matrices
of {\em finite type}.

In this note we present a generalization of the above identification
to  {\em any tamely laced\/} Cartan matrices,
especially to the {\em nonsimply laced ones of
nonfinite type},
thereby justifying Sec.~6.5 of Ref.~\refcite{Kuniba09}
which announced that such a generalization is possible.
Basically it is a straightforward extension of the simply laced ones
\cite{Kuniba09} and the nonsimply laced ones of finite type
\cite{Inoue10a,Inoue10b}, but it is necessarily more complicated.
At this time we do not have any immediate application of such a
generalization.
However, we believe that this is a necessary step
toward further study of the intriguing interplay
of two worlds --- the representation theories of 
quantum groups and quivers --- through cluster algebras.

\section{T and Y-systems}

In this section we recall  the definitions
of (restricted) T and Y-systems.
See Ref.~\refcite{Kuniba09} for more detail.

With a tamely laced Cartan matrix $C$,
we  associate a {\em Dynkin diagram\/} $X(C)$ in the standard way:
For any pair $i\neq j \in I$ with $C_{ij}<0$,
the vertices $i$ and $j$ are connected by
 $\max\{|C_{ij}|,|C_{ji}|\}$ lines,
and the lines are equipped with an arrow from $j$ to $i$
 if $C_{ij}<-1$.
Note that the condition \eqref{eq:Ccond1} means

\begin{itemize}
\item[(i)] the vertices $i$ and $j$ are not connected,
if $d_i,d_j>1$ and $d_i\neq d_j$,

\item[(ii)]
the vertices $i$ and $j$ are connected by $d_i$ lines with
 an arrow from $i$ to $j$
or not connected, if $d_i>1$ and $d_j=1$,

\item[(iii)]
the vertices $i$ and $j$ are connected by a single line
or not connected, if $d_i=d_j$.
\end{itemize}

As usual,
 we say that a  Cartan matrix $C$ is {\em simply laced\/} if
$C_{ij}=0$ or $-1$ for any $i\neq j$.
If $C$ is simply laced, then it is tamely laced.

For a tamely laced Cartan matrix $C$, we set  integers $t$
and $t_a$ ($a\in I$) by
\begin{align}
\label{eq:t1}
t=\mathrm{lcm}(d_1,\dots,d_r),
\quad t_a=\frac{t}{d_a}.
\end{align}
For an integer $\ell\geq 2$,
we set
\begin{align}
\mathcal{I}_{\ell}:=\{(a,m,u)\mid
a\in I; m=1,\dots,t_a\ell-1; u\in 
\frac{1}{t}\mathbb{Z}\}.
\end{align}
For $a,b\in I$, we write $a\sim b$ if
$C_{ab}<0$, i.e., $a$ and $b$ are adjacent 
in $X(C)$.

First, we introduce the T-systems and the associated rings.

\begin{Definition}
Fix an integer $\ell \geq 2$.
For a tamely laced  Cartan matrix $C$,
the {\em level $\ell$ restricted T-system $\mathbb{T}_{\ell}(C)$
 associated with $C$ (with the unit boundary condition)}
is the following system of relations for
a family of variables
 $T_{\ell}=\{T^{(a)}_m(u) \mid (a,m,u)\in \mathcal{I}_{\ell} \}$,
\begin{align}
\label{eq:T1}
\textstyle
T^{(a)}_m\left(u-\frac{d_a}{t}\right)
T^{(a)}_m\left(u+\frac{d_a}{t}\right)
&=
T^{(a)}_{m-1}(u)T^{(a)}_{m+1}(u)
+
\prod_{b: b\sim a}
T^{(b)}_{\frac{d_a}{d_b}m}(u)
\quad\mbox{if $d_a>1$},
\\
\label{eq:T2}
\textstyle
T^{(a)}_m\left(u-\frac{d_a}{t}\right)
T^{(a)}_m\left(u+\frac{d_a}{t}\right)
&=
T^{(a)}_{m-1}(u)T^{(a)}_{m+1}(u)
+
\prod_{b: b\sim a}
S^{(b)}_{m}(u)
\quad\mbox{if $d_a=1$},
\end{align}
where 
$T^{(a)}_0 (u)= 1$,
and furthermore, $T^{(a)}_{t_a\ell}(u)=1$
(the {\em unit boundary condition})
 if they occur
in the right hand sides in the relations.
The symbol $S^{(b)}_{m}(u)$ is defined 
as follows. For $m=0,1,2,\dots$ and $0\leq j < d_b$,
\begin{align}
\begin{split}
\textstyle
S^{(b)}_{d_bm+j}(u)
=&
\left\{
{\displaystyle
\prod_{k=1}^j
}
 T^{(b)}_{m+1}
\left(u+\frac{1}{t}(j+1-2k)\right)
\right\}\\
&
\times
\left\{
{\displaystyle
\prod_{k=1}^{d_b-j}
}
 T^{(b)}_{m}\left(u+\frac{1}{t}(d_b-j+1-2k)\right)
\right\}.
\end{split}
\end{align}
\end{Definition}

For the later use, let us formally write \eqref{eq:T1} and \eqref{eq:T2}
 in a unified manner
\begin{align}
\label{eq:Tu}
\begin{split}
T^{(a)}_{m}\left(u-\textstyle\frac{d_a}{t}\right)
T^{(a)}_{m}\left(u+\textstyle\frac{d_a}{t}\right)
&=
T^{(a)}_{m-1}(u)T^{(a)}_{m+1}(u)\\
&\quad +
\prod_{(b,k,v)\in \mathcal{I}_{\ell}}
T^{(b)}_{k}(v)^{G(b,k,v;a,m,u)}.
\end{split}
\end{align}

\begin{Definition}
\label{defn:TC}
Let $\EuScript{T}_{\ell}(C)$
be the commutative ring over $\mathbb{Z}$ 
with identity element,  with generators
$T^{(a)}_m(u)^{\pm 1}$ ($(a,m,u)\in \mathcal{I}_{\ell}$)
and relations $\mathbb{T}_{\ell}(C)$
together with $T^{(a)}_m(u)T^{(a)}_m(u)^{-1}=1$.
Let $\EuScript{T}^{\circ}_{\ell}(C)$
be the subring of $\EuScript{T}_{\ell}(C)$
generated by 
$T^{(a)}_m(u)$ ($(a,m,u)\in \mathcal{I}_{\ell}$).
\end{Definition}

Similarly, we introduce the Y-systems
and the associated groups.

\begin{Definition}
Fix an integer $\ell \geq 2$.
For a tamely laced Cartan matrix $C$,
the {\em level $\ell$
 restricted Y-system $\mathbb{Y}_{\ell}(C)$ associated with $C$}
is the following system of relations for
a family of variables
 $Y=\{Y^{(a)}_m(u) \mid (a,m,u)\in \mathcal{I}_{\ell} \}$,
\begin{align}
\label{eq:Y1}
\textstyle
Y^{(a)}_m\left(u-\frac{d_a}{t}\right)
Y^{(a)}_m\left(u+\frac{d_a}{t}\right)
&=
\frac{
{\displaystyle \prod_{b:b\sim a}}
Z^{(b)}_{\frac{d_a}{d_b},m}(u)
}
{
(1+Y^{(a)}_{m-1}(u)^{-1})(1+Y^{(a)}_{m+1}(u)^{-1})}
\quad\mbox{if $d_a>1$},
\\
\label{eq:Y2}
\textstyle
Y^{(a)}_m\left(u-\frac{d_a}{t}\right)
Y^{(a)}_m\left(u+\frac{d_a}{t}\right)
&=
\frac{
{\displaystyle \prod_{b:b\sim a}}
\left(1+Y^{(b)}_{\frac{m}{d_b}}(u)\right)
}
{
(1+Y^{(a)}_{m-1}(u)^{-1})(1+Y^{(a)}_{m+1}(u)^{-1})}
\quad\mbox{if $d_a=1$},
\end{align}
where 
$Y^{(a)}_0 (u)^{-1}=
Y^{(a)}_{t_a\ell} (u)^{-1}=0$ if they occur
in the right hand sides in the relations.
Besides, 
$Y^{(b)}_{m/d_b}(u)=0$ in \eqref{eq:Y2}
if $m/d_b \not\in \mathbb{N}$.
The symbol $Z^{(b)}_{p,m}(u)$ ($p\in \mathbb{N}$)
is defined as follows.
\begin{align}
\textstyle
Z^{(b)}_{p,m}(u)=
{\displaystyle\prod_{j=-p+1}^{p-1}}
\left\{
{\displaystyle\prod_{k=1}^{p-|j|}}
\left(
1+Y^{(b)}_{pm+j}\bigl(
u+\frac{1}{t}(p-|j|+1-2k)
\bigr)
\right)
\right\}.
\end{align}
\end{Definition}

One can write \eqref{eq:Y1} and \eqref{eq:Y2}
in a unified manner as
\begin{align}
\textstyle
Y^{(a)}_m\left(u-\frac{d_a}{t}\right)
Y^{(a)}_m\left(u+\frac{d_a}{t}\right)
&=
\frac{
{\displaystyle \prod_{(b,k,v)\in \mathcal{I}_{\ell}}
(1+Y^{(b)}_k(v))^{{}^t G(b,k,v;a,m,u)}
}
}
{
(1+Y^{(a)}_{m-1}(u)^{-1})(1+Y^{(a)}_{m+1}(u)^{-1})},
\end{align}
where ${}^t G(b,k,v;a,m,u):=G(a,m,u;b,k,v)$.

A {\em semifield\/} $(\mathbb{P},\oplus)$ is an
abelian multiplicative group $\mathbb{P}$ endowed with a binary
operation of addition $\oplus$ which is commutative,
associative, and distributive with respect to the
multiplication in $\mathbb{P}$.

\begin{Definition}
\label{def:YC}
Let $\EuScript{Y}_{\ell}(C)$
be the semifield with generators
$Y^{(a)}_m(u)$
 $((a,m,u)\in \mathcal{I}_{\ell})$
and relations $\mathbb{Y}_{\ell}(C)$.
Let $\EuScript{Y}^{\circ}_{\ell}(C)$
be the multiplicative subgroup
of $\EuScript{Y}_{\ell}(C)$
generated by
$Y^{(a)}_m(u)$, $1+Y^{(a)}_m(u)$
 ($(a,m,u)\in \mathcal{I}_{\ell}$).
(Here we use the symbol $+$ instead of $\oplus$ 
for simplicity.)
\end{Definition}

\section{Cluster algebra with coefficients}
\label{sec:cluster}

In this section
we recall the definition
of cluster algebras with
coefficients
following Ref.~\refcite{Fomin07}.
The description here is minimal to fix convention
and notion.
See Ref.~\refcite{Fomin07} for more detail and information.

Let $I$ be a finite set, and let $B=(B_{ij})_{i,j\in I}$ be a
 skew
symmetric (integer) matrix.
Let $x=(x_i)_{i\in I}$ and $y=(y_i)_{i\in I}$
be $I$-tuples of formal variables.
Let  $\mathbb{P}=\mathbb{Q}_{\mathrm{sf}}(y)$ 
be the {\em universal semifield\/} of
$y=(y_i)_{i\in I}$, namely,
the semifield consisting of 
the {\em subtraction-free\/} rational functions of $y$ with
usual multiplication and addition (but no subtraction)
in the rational function
 field $\mathbb{Q}(y)$.
Let $\mathbb{Q}\mathbb{P}$
denote the quotient field of the group ring $\mathbb{Z}\mathbb{P}$
of $\mathbb{P}$.

For the above triplet $(B,x,y)$, called the {\em initial seed},
the  {\em cluster algebra $\mathcal{A}(B,x,y)$ with
coefficients in $\mathbb{P}$} is defined as follows.

Let $(B',x',y')$ be a triplet consisting of
skew symmetric matrix $B'$,
an $I$-tuple $x'=(x'_i)_{i\in I}$ with
 $x'_i\in \mathbb{Q}\mathbb{P}(x)$,
and 
an $I$-tuple $y'=(y'_i)_{i\in I}$ with $y'_i\in \mathbb{P}$.
For each $k\in I$, we define another triplet
$(B'',x'',y'')=\mu_k(B',x',y')$, called the {\em mutation
of $(B',x',y')$ at $k$}, as follows.

{\it  (i) Mutations of matrix.}
\begin{align}
\label{eq:Bmut}
B''_{ij}=
\begin{cases}
-B'_{ij}& \mbox{$i=k$ or $j=k$},\\
B'_{ij}+\frac{1}{2}
(|B'_{ik}|B'_{kj} + B'_{ik}|B'_{kj}|)
&\mbox{otherwise}.
\end{cases}
\end{align}

{\it (ii)  Exchange relation of coefficient tuple.}
\begin{align}
\label{eq:coef}
y''_i =
\begin{cases}
\displaystyle
{y'_k}{}^{-1}&i=k,\\
\displaystyle
y'_i \left(\frac{y'_k}{1\oplus {y'_k}}\right)^{B'_{ki}}&
i\neq k,\ B'_{ki}\geq 0,\\
y'_i (1\oplus y'_k)^{-B'_{ki}}&
i\neq k,\ B'_{ki}\leq 0.\\
\end{cases}
\end{align}

{\it (iii)   Exchange relation of cluster.}
\begin{align}
\label{eq:clust}
x''_i =
\begin{cases}
\displaystyle
\frac{y'_k
\prod_{j: B'_{jk}>0} {x'_j}^{B'_{jk}}
+
\prod_{j: B'_{jk}<0} {x'_j}^{-B'_{jk}}
}{(1\oplus y'_k)x'_k}
&
i= k,\\
{x'_i}&i\neq k.\\
\end{cases}
\end{align}
It is easy to see that  $\mu_k$ is an involution,
namely, $\mu_k(B'',x'',y'')=(B',x',y')$.
Now, starting from the initial seed
$(B,x,y)$, iterate mutations and collect all the
resulted triplets $(B',x',y')$.
We call $(B',x',y')$ the {\em seeds\/},
$y'$ and $y'_i$ a {\em coefficient tuple} and
a {\em coefficient\/},
$x'$  and $x'_i$, a {\em cluster\/} and
a {\em cluster variable}, respectively.
The {\em cluster algebra $\mathcal{A}(B,x,y)$ with
coefficients in $\mathbb{P}$} is the
$\mathbb{Z}\mathbb{P}$-subalgebra of the
rational function field $\mathbb{Q}\mathbb{P}(x)$
generated by all the cluster variables.
Similarly, the {\em coefficient group $\mathcal{G}(B,y)$
associated with $\mathcal{A}(B,x,y)$}
is the
multiplicative subgroup of the semifield $\mathbb{P}$
generated by all the coefficients $y'_i$ together with $1\oplus y'_i$.

It is standard to identify
a skew-symmetric (integer) matrix $B=(B_{ij})_{i,j\in I}$
with a {\em quiver $Q$
without loops or 2-cycles}.
The set of the vertices of $Q$ is given by $I$,
and we put $B_{ij}$ arrows from $i$ to $j$ 
if $B_{ij}>0$.
The mutation $Q''=\mu_k(Q')$ of a quiver $Q'$ is given by the following
rule:
For each pair of an incoming arrow $i\rightarrow k$
and an outgoing arrow $k\rightarrow j$ in $Q'$,
add a new arrow $i\rightarrow j$.
Then, remove a maximal set of pairwise disjoint 2-cycles.
Finally, reverse all arrows incident with $k$.

\section{Cluster algebraic formulation: The case $|I|=2$; $t$ is odd}
\label{sect:todd}

\subsection{Cartan matrix $M_t$}

We are going to identify $\mathbb{T}_{\ell}(C)$ and
$\mathbb{Y}_{\ell}(C)$ as relations for cluster variables
and coefficients of the cluster algebra
associated with a certain quiver $Q_{\ell}(C)$.

To begin with, we consider the case
$I=\{1,2\}$, which will be used as building blocks of 
the general case.
Without loss of generality we may assume that
a Cartan matrix $C$ is {\em indecomposable},
i.e., $X(C)$ is connected.
Thus, we assume that our tamely laced Cartan matrix $C$
has the form ($t=1,2,\dots$)
\begin{align}
C=M_t:=
\begin{pmatrix}
2 & -1\\
-t & 2\\
\end{pmatrix},
\quad
D=
\begin{pmatrix}
t & 0\\
0 & 1\\
\end{pmatrix}.
\end{align}
We have the data $d_1=t$, $d_2=1$,
$t=\mathrm{lcm}(d_1,d_2)$, $t_1=1$, $t_2=t$,
and the corresponding Dynkin diagram looks as follows
($t$ lines in the middle and there is no arrow for $t=1$):
\begin{align*}
\begin{picture}(20,25)(0,-15)
%
%
\put(0,0){
\put(0,0){\circle{6}}
\put(20,0){\circle{6}}
\drawline(2,-2)(18,-2)
\drawline(2,2)(18,2)
\drawline(3,0)(17,0)
\drawline(7,6)(13,0)
\drawline(7,-6)(13,0)
\put(-2,-15){\small $1$}
\put(18,-15){\small $2$}
}
\end{picture}
\end{align*}
We ask the reader to
 refer to Refs.~\refcite{Kuniba09}, \refcite{Inoue10a},
and \refcite{Inoue10b},
where the cases $t=1$ (type $A_2$),
 $2$ (type $B_2$), and  $3$ (type $G_2$),
respectively, are treated in detail.

It turns out that
we should separate the problem depending on the parity 
of $t$.
In this section we consider the case when $t$ is odd.

\subsection{Parity decompositions of T and Y-systems}

For a triplet $(a,m,u)\in \mathcal{I}_{\ell}$,
we set the parity conditions $\mathbf{P}_{+}$ and
$\mathbf{P}_{-}$ by
\begin{align}
\label{eq:GPcond}
\begin{split}
\mathbf{P}_{+}:& \ \mbox{$m+tu$ is odd for $a=1$; $m+tu$ is even for $a=2$},\\
\mathbf{P}_{-}:& \ \mbox{$m+tu$ is even for $a=1$; $m+tu$ is odd for $a=2$}.
\end{split}
\end{align}
We write, for example, $(a,m,u):\mathbf{P}_{+}$ if $(a,m,u)$ satisfies
$\mathbf{P}_{+}$.
We have $\mathcal{I}_{\ell}=
\mathcal{I}_{\ell+}\sqcup \mathcal{I}_{\ell-}$,
where $\mathcal{I}_{\ell\varepsilon}$ 
is the set of all $(a,m,u):\mathbf{P}_{\varepsilon}$.
Define $\EuScript{T}^{\circ}_{\ell}(M_t)_{\varepsilon}$
($\varepsilon=\pm$)
to be the subring of $\EuScript{T}^{\circ}_{\ell}(M_t)$
generated by
 $T^{(a)}_m(u)$
$((a,m,u)\in \mathcal{I}_{\ell\varepsilon})$.
Then, we have
$\EuScript{T}^{\circ}_{\ell}(M_t)_+
\simeq
\EuScript{T}^{\circ}_{\ell}(M_t)_-
$
by $T^{(a)}_m(u)\mapsto T^{(a)}_m(u+\frac{1}{t})$ and
\begin{align}
\label{eq:Tfact}
\EuScript{T}^{\circ}_{\ell}(M_t)
\simeq
\EuScript{T}^{\circ}_{\ell}(M_t)_+
\otimes_{\mathbb{Z}}
\EuScript{T}^{\circ}_{\ell}(M_t)_-.
\end{align}

For a triplet $(a,m,u)\in \mathcal{I}_{\ell}$,
we introduce another parity conditions $\mathbf{P}'_{+}$ and
$\mathbf{P}'_{-}$ by
\begin{align}
\label{eq:GP'cond}
\begin{split}
\mathbf{P}'_{+}:& \ \mbox{$m+tu$ is even for $a=1$; $m+tu$ is odd for $a=2$},\\
\mathbf{P}'_{-}:& \ \mbox{$m+tu$ is odd for $a=1$; $m+tu$ is even for $a=2$}.
\end{split}
\end{align}
Since $\mathbf{P}'_{\pm}=\mathbf{P}_{\mp}$,
it may seem redundant,
but we use this notation to make the description unified 
for both odd and even $t$.
We have
\begin{align}
\label{eq:PP'}
(a,m,u):\mathbf{P}'_+ \ \Longleftrightarrow\ 
\textstyle (a,m,u\pm \frac{d_a}{t}):\mathbf{P}_+.
\end{align}
Let $\mathcal{I}'_{\ell\varepsilon}$ be
 the set of all $(a,m,u):\mathbf{P}'_{\varepsilon}$.
Define $\EuScript{Y}^{\circ}_{\ell}(M_t)_{\varepsilon}$
($\varepsilon=\pm$)
to be the subgroup of $\EuScript{Y}^{\circ}_{\ell}(M_t)$
generated by
$Y^{(a)}_m(u)$, $1+Y^{(a)}_m(u)$
$((a,m,u)\in \mathcal{I}'_{\ell\varepsilon})$.
Then, we have
$\EuScript{Y}^{\circ}_{\ell}(M_t)_+
\simeq
\EuScript{Y}^{\circ}_{\ell}(M_t)_-
$
by $Y^{(a)}_m(u)\mapsto Y^{(a)}_m(u+\frac{1}{t})$,
$1+Y^{(a)}_m(u)\mapsto 1+Y^{(a)}_m(u+\frac{1}{t})$,
 and
\begin{align}
\label{eq:Yfact}
\EuScript{Y}^{\circ}_{\ell}(M_t)
\simeq
\EuScript{Y}^{\circ}_{\ell}(M_t)_+
\times
\EuScript{Y}^{\circ}_{\ell}(M_t)_-.
\end{align}

\subsection{Quiver $Q_{\ell}(M_t)$}
\label{subsect:quiverodd}

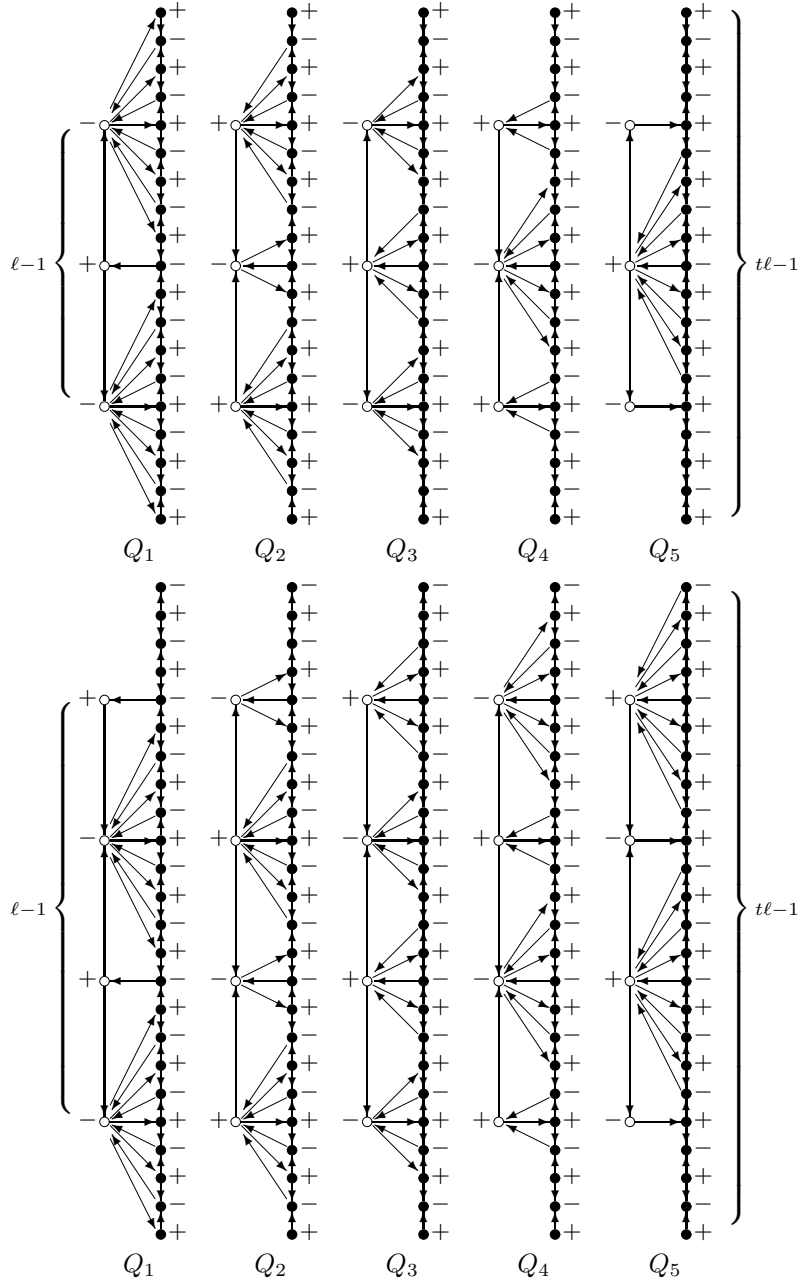
\begin{figure}
\setlength{\unitlength}{0.71pt}
\begin{picture}(360,288)(-60,-15)
%
\put(0,0)
{
\put(0,60){\circle{5}}
\put(0,135){\circle{5}}
\put(0,210){\circle{5}}
\put(30,0){\circle*{5}}
\put(30,15){\circle*{5}}
\put(30,30){\circle*{5}}
\put(30,45){\circle*{5}}
\put(30,60){\circle*{5}}
\put(30,75){\circle*{5}}
\put(30,90){\circle*{5}}
\put(30,105){\circle*{5}}
\put(30,120){\circle*{5}}
\put(30,135){\circle*{5}}
\put(30,150){\circle*{5}}
\put(30,165){\circle*{5}}
\put(30,180){\circle*{5}}
\put(30,195){\circle*{5}}
\put(30,210){\circle*{5}}
\put(30,225){\circle*{5}}
\put(30,240){\circle*{5}}
\put(30,255){\circle*{5}}
\put(30,270){\circle*{5}}
%
\put(30,3){\vector(0,1){9}}
\put(30,27){\vector(0,-1){9}}
\put(30,33){\vector(0,1){9}}
\put(30,57){\vector(0,-1){9}}
\put(30,63){\vector(0,1){9}}
\put(30,87){\vector(0,-1){9}}
\put(30,93){\vector(0,1){9}}
\put(30,117){\vector(0,-1){9}}
\put(30,123){\vector(0,1){9}}
\put(30,147){\vector(0,-1){9}}
\put(30,153){\vector(0,1){9}}
\put(30,177){\vector(0,-1){9}}
\put(30,183){\vector(0,1){9}}
\put(30,207){\vector(0,-1){9}}
\put(30,213){\vector(0,1){9}}
\put(30,237){\vector(0,-1){9}}
\put(30,243){\vector(0,1){9}}
\put(30,267){\vector(0,-1){9}}
\put(0,132){\vector(0,-1){69}}
\put(0,138){\vector(0,1){69}}
\put(3,51){\vector(1,-2){24}}
\put(27,19){\vector(-2,3){23}}
\put(3,58){\vector(1,-1){24}}
\put(27,47){\vector(-2,1){23}}
\put(3,60){\vector(1,0){24}}
\put(27,73){\vector(-2,-1){23}}
\put(3,62){\vector(1,1){24}}
\put(27,101){\vector(-2,-3){23}}
\put(3,69){\vector(1,2){24}}
\put(27,135){\vector(-1,0){24}}
\put(3,201){\vector(1,-2){24}}
\put(27,169){\vector(-2,3){23}}
\put(3,208){\vector(1,-1){24}}
\put(27,197){\vector(-2,1){23}}
\put(3,210){\vector(1,0){24}}
\put(27,223){\vector(-2,-1){23}}
\put(3,212){\vector(1,1){24}}
\put(27,251){\vector(-2,-3){23}}
\put(3,219){\vector(1,2){24}}
\put(3,-2)
{
\put(-17,60){\small $-$}
\put(-17,135){\small $+$}
\put(-17,210){\small $-$}
}
\put(4,-2)
{
\put(30,0){\small $+$}
\put(30,15){\small $-$}
\put(30,30){\small $+$}
\put(30,45){\small $-$}
\put(30,60){\small $+$}
\put(30,75){\small $-$}
\put(30,90){\small $+$}
\put(30,105){\small $-$}
\put(30,120){\small $+$}
\put(30,135){\small $-$}
\put(30,150){\small $+$}
\put(30,165){\small $-$}
\put(30,180){\small $+$}
\put(30,195){\small $-$}
\put(30,210){\small $+$}
\put(30,225){\small $-$}
\put(30,240){\small $+$}
\put(30,255){\small $-$}
\put(30,270){\small $+$}
}
}
\put(70,0)
{
\put(0,60){\circle{5}}
\put(0,135){\circle{5}}
\put(0,210){\circle{5}}
\put(30,0){\circle*{5}}
\put(30,15){\circle*{5}}
\put(30,30){\circle*{5}}
\put(30,45){\circle*{5}}
\put(30,60){\circle*{5}}
\put(30,75){\circle*{5}}
\put(30,90){\circle*{5}}
\put(30,105){\circle*{5}}
\put(30,120){\circle*{5}}
\put(30,135){\circle*{5}}
\put(30,150){\circle*{5}}
\put(30,165){\circle*{5}}
\put(30,180){\circle*{5}}
\put(30,195){\circle*{5}}
\put(30,210){\circle*{5}}
\put(30,225){\circle*{5}}
\put(30,240){\circle*{5}}
\put(30,255){\circle*{5}}
\put(30,270){\circle*{5}}
%
\put(30,3){\vector(0,1){9}}
\put(30,27){\vector(0,-1){9}}
\put(30,33){\vector(0,1){9}}
\put(30,57){\vector(0,-1){9}}
\put(30,63){\vector(0,1){9}}
\put(30,87){\vector(0,-1){9}}
\put(30,93){\vector(0,1){9}}
\put(30,117){\vector(0,-1){9}}
\put(30,123){\vector(0,1){9}}
\put(30,147){\vector(0,-1){9}}
\put(30,153){\vector(0,1){9}}
\put(30,177){\vector(0,-1){9}}
\put(30,183){\vector(0,1){9}}
\put(30,207){\vector(0,-1){9}}
\put(30,213){\vector(0,1){9}}
\put(30,237){\vector(0,-1){9}}
\put(30,243){\vector(0,1){9}}
\put(30,267){\vector(0,-1){9}}
\put(0,63){\vector(0,1){69}}
\put(0,207){\vector(0,-1){69}}
\put(27,19){\vector(-2,3){23}}
\put(3,58){\vector(1,-1){24}}
\put(27,47){\vector(-2,1){23}}
\put(3,60){\vector(1,0){24}}
\put(27,73){\vector(-2,-1){23}}
\put(3,62){\vector(1,1){24}}
\put(27,101){\vector(-2,-3){23}}
\put(3,133){\vector(2,-1){24}}
\put(27,135){\vector(-1,0){23}}
\put(3,137){\vector(2,1){24}}
\put(27,169){\vector(-2,3){23}}
\put(3,208){\vector(1,-1){24}}
\put(27,197){\vector(-2,1){23}}
\put(3,210){\vector(1,0){24}}
\put(27,223){\vector(-2,-1){23}}
\put(3,212){\vector(1,1){24}}
\put(27,251){\vector(-2,-3){23}}
\put(3,-2)
{
\put(-17,60){\small $+$}
\put(-17,135){\small $-$}
\put(-17,210){\small $+$}
}
\put(4,-2)
{
\put(30,0){\small $+$}
\put(30,15){\small $-$}
\put(30,30){\small $+$}
\put(30,45){\small $-$}
\put(30,60){\small $+$}
\put(30,75){\small $-$}
\put(30,90){\small $+$}
\put(30,105){\small $-$}
\put(30,120){\small $+$}
\put(30,135){\small $-$}
\put(30,150){\small $+$}
\put(30,165){\small $-$}
\put(30,180){\small $+$}
\put(30,195){\small $-$}
\put(30,210){\small $+$}
\put(30,225){\small $-$}
\put(30,240){\small $+$}
\put(30,255){\small $-$}
\put(30,270){\small $+$}
}
}
\put(140,0)
{
\put(0,60){\circle{5}}
\put(0,135){\circle{5}}
\put(0,210){\circle{5}}
\put(30,0){\circle*{5}}
\put(30,15){\circle*{5}}
\put(30,30){\circle*{5}}
\put(30,45){\circle*{5}}
\put(30,60){\circle*{5}}
\put(30,75){\circle*{5}}
\put(30,90){\circle*{5}}
\put(30,105){\circle*{5}}
\put(30,120){\circle*{5}}
\put(30,135){\circle*{5}}
\put(30,150){\circle*{5}}
\put(30,165){\circle*{5}}
\put(30,180){\circle*{5}}
\put(30,195){\circle*{5}}
\put(30,210){\circle*{5}}
\put(30,225){\circle*{5}}
\put(30,240){\circle*{5}}
\put(30,255){\circle*{5}}
\put(30,270){\circle*{5}}
%
\put(30,3){\vector(0,1){9}}
\put(30,27){\vector(0,-1){9}}
\put(30,33){\vector(0,1){9}}
\put(30,57){\vector(0,-1){9}}
\put(30,63){\vector(0,1){9}}
\put(30,87){\vector(0,-1){9}}
\put(30,93){\vector(0,1){9}}
\put(30,117){\vector(0,-1){9}}
\put(30,123){\vector(0,1){9}}
\put(30,147){\vector(0,-1){9}}
\put(30,153){\vector(0,1){9}}
\put(30,177){\vector(0,-1){9}}
\put(30,183){\vector(0,1){9}}
\put(30,207){\vector(0,-1){9}}
\put(30,213){\vector(0,1){9}}
\put(30,237){\vector(0,-1){9}}
\put(30,243){\vector(0,1){9}}
\put(30,267){\vector(0,-1){9}}
\put(0,132){\vector(0,-1){69}}
\put(0,138){\vector(0,1){69}}
\put(3,58){\vector(1,-1){24}}
\put(27,47){\vector(-2,1){23}}
\put(3,60){\vector(1,0){24}}
\put(27,73){\vector(-2,-1){23}}
\put(3,62){\vector(1,1){24}}
\put(27,107){\vector(-1,1){23}}
\put(3,133){\vector(2,-1){24}}
\put(27,135){\vector(-1,0){23}}
\put(3,137){\vector(2,1){24}}
\put(27,163){\vector(-1,-1){23}}
\put(3,208){\vector(1,-1){24}}
\put(27,197){\vector(-2,1){23}}
\put(3,210){\vector(1,0){24}}
\put(27,223){\vector(-2,-1){23}}
\put(3,212){\vector(1,1){24}}
\put(3,-2)
{
\put(-17,60){\small $-$}
\put(-17,135){\small $+$}
\put(-17,210){\small $-$}
}
\put(4,-2)
{
\put(30,0){\small $+$}
\put(30,15){\small $-$}
\put(30,30){\small $+$}
\put(30,45){\small $-$}
\put(30,60){\small $+$}
\put(30,75){\small $-$}
\put(30,90){\small $+$}
\put(30,105){\small $-$}
\put(30,120){\small $+$}
\put(30,135){\small $-$}
\put(30,150){\small $+$}
\put(30,165){\small $-$}
\put(30,180){\small $+$}
\put(30,195){\small $-$}
\put(30,210){\small $+$}
\put(30,225){\small $-$}
\put(30,240){\small $+$}
\put(30,255){\small $-$}
\put(30,270){\small $+$}
}
}
%
\put(210,0)
{
\put(0,60){\circle{5}}
\put(0,135){\circle{5}}
\put(0,210){\circle{5}}
\put(30,0){\circle*{5}}
\put(30,15){\circle*{5}}
\put(30,30){\circle*{5}}
\put(30,45){\circle*{5}}
\put(30,60){\circle*{5}}
\put(30,75){\circle*{5}}
\put(30,90){\circle*{5}}
\put(30,105){\circle*{5}}
\put(30,120){\circle*{5}}
\put(30,135){\circle*{5}}
\put(30,150){\circle*{5}}
\put(30,165){\circle*{5}}
\put(30,180){\circle*{5}}
\put(30,195){\circle*{5}}
\put(30,210){\circle*{5}}
\put(30,225){\circle*{5}}
\put(30,240){\circle*{5}}
\put(30,255){\circle*{5}}
\put(30,270){\circle*{5}}
%
\put(30,3){\vector(0,1){9}}
\put(30,27){\vector(0,-1){9}}
\put(30,33){\vector(0,1){9}}
\put(30,57){\vector(0,-1){9}}
\put(30,63){\vector(0,1){9}}
\put(30,87){\vector(0,-1){9}}
\put(30,93){\vector(0,1){9}}
\put(30,117){\vector(0,-1){9}}
\put(30,123){\vector(0,1){9}}
\put(30,147){\vector(0,-1){9}}
\put(30,153){\vector(0,1){9}}
\put(30,177){\vector(0,-1){9}}
\put(30,183){\vector(0,1){9}}
\put(30,207){\vector(0,-1){9}}
\put(30,213){\vector(0,1){9}}
\put(30,237){\vector(0,-1){9}}
\put(30,243){\vector(0,1){9}}
\put(30,267){\vector(0,-1){9}}
\put(0,63){\vector(0,1){69}}
\put(0,207){\vector(0,-1){69}}
\put(27,47){\vector(-2,1){23}}
\put(3,60){\vector(1,0){24}}
\put(27,73){\vector(-2,-1){23}}
\put(3,128){\vector(2,-3){24}}
\put(27,107){\vector(-1,1){23}}
\put(3,133){\vector(2,-1){24}}
\put(27,135){\vector(-1,0){23}}
\put(3,137){\vector(2,1){24}}
\put(3,142){\vector(2,3){23}}
\put(27,163){\vector(-1,-1){24}}
\put(27,197){\vector(-2,1){23}}
\put(3,210){\vector(1,0){24}}
\put(27,223){\vector(-2,-1){23}}
\put(3,-2)
{
\put(-17,60){\small $+$}
\put(-17,135){\small $-$}
\put(-17,210){\small $+$}
}
\put(4,-2)
{
\put(30,0){\small $+$}
\put(30,15){\small $-$}
\put(30,30){\small $+$}
\put(30,45){\small $-$}
\put(30,60){\small $+$}
\put(30,75){\small $-$}
\put(30,90){\small $+$}
\put(30,105){\small $-$}
\put(30,120){\small $+$}
\put(30,135){\small $-$}
\put(30,150){\small $+$}
\put(30,165){\small $-$}
\put(30,180){\small $+$}
\put(30,195){\small $-$}
\put(30,210){\small $+$}
\put(30,225){\small $-$}
\put(30,240){\small $+$}
\put(30,255){\small $-$}
\put(30,270){\small $+$}
}
}
%
\put(280,0)
{
\put(0,60){\circle{5}}
\put(0,135){\circle{5}}
\put(0,210){\circle{5}}
\put(30,0){\circle*{5}}
\put(30,15){\circle*{5}}
\put(30,30){\circle*{5}}
\put(30,45){\circle*{5}}
\put(30,60){\circle*{5}}
\put(30,75){\circle*{5}}
\put(30,90){\circle*{5}}
\put(30,105){\circle*{5}}
\put(30,120){\circle*{5}}
\put(30,135){\circle*{5}}
\put(30,150){\circle*{5}}
\put(30,165){\circle*{5}}
\put(30,180){\circle*{5}}
\put(30,195){\circle*{5}}
\put(30,210){\circle*{5}}
\put(30,225){\circle*{5}}
\put(30,240){\circle*{5}}
\put(30,255){\circle*{5}}
\put(30,270){\circle*{5}}
%
\put(30,3){\vector(0,1){9}}
\put(30,27){\vector(0,-1){9}}
\put(30,33){\vector(0,1){9}}
\put(30,57){\vector(0,-1){9}}
\put(30,63){\vector(0,1){9}}
\put(30,87){\vector(0,-1){9}}
\put(30,93){\vector(0,1){9}}
\put(30,117){\vector(0,-1){9}}
\put(30,123){\vector(0,1){9}}
\put(30,147){\vector(0,-1){9}}
\put(30,153){\vector(0,1){9}}
\put(30,177){\vector(0,-1){9}}
\put(30,183){\vector(0,1){9}}
\put(30,207){\vector(0,-1){9}}
\put(30,213){\vector(0,1){9}}
\put(30,237){\vector(0,-1){9}}
\put(30,243){\vector(0,1){9}}
\put(30,267){\vector(0,-1){9}}
\put(0,132){\vector(0,-1){69}}
\put(0,138){\vector(0,1){69}}
\put(3,60){\vector(1,0){24}}
\put(27,77){\vector(-1,2){23}}
\put(3,128){\vector(2,-3){24}}
\put(27,107){\vector(-1,1){23}}
\put(3,133){\vector(2,-1){24}}
\put(27,135){\vector(-1,0){23}}
\put(3,137){\vector(2,1){24}}
\put(27,163){\vector(-1,-1){23}}
\put(3,142){\vector(2,3){24}}
\put(27,193){\vector(-1,-2){23}}
\put(3,210){\vector(1,0){24}}
\put(3,-2)
{
\put(-17,60){\small $-$}
\put(-17,135){\small $+$}
\put(-17,210){\small $-$}
}
\put(4,-2)
{
\put(30,0){\small $+$}
\put(30,15){\small $-$}
\put(30,30){\small $+$}
\put(30,45){\small $-$}
\put(30,60){\small $+$}
\put(30,75){\small $-$}
\put(30,90){\small $+$}
\put(30,105){\small $-$}
\put(30,120){\small $+$}
\put(30,135){\small $-$}
\put(30,150){\small $+$}
\put(30,165){\small $-$}
\put(30,180){\small $+$}
\put(30,195){\small $-$}
\put(30,210){\small $+$}
\put(30,225){\small $-$}
\put(30,240){\small $+$}
\put(30,255){\small $-$}
\put(30,270){\small $+$}
}
}
\put(10,-20){$Q_1$}
\put(80,-20){$Q_2$}
\put(150,-20){$Q_3$}
\put(220,-20){$Q_4$}
\put(290,-20){$Q_5$}
\put(-50,133){${\scriptstyle\ell -1}\left\{ \makebox(0,75){}\right.$}
\put(330,133){$\left. \makebox(0,140){}\right\}{\scriptstyle t\ell -1}$}
\end{picture}
%
%
\begin{picture}(360,380)(-60,-15)
%
\put(0,0)
{
\put(0,60){\circle{5}}
\put(0,135){\circle{5}}
\put(0,210){\circle{5}}
\put(0,285){\circle{5}}
\put(30,0){\circle*{5}}
\put(30,15){\circle*{5}}
\put(30,30){\circle*{5}}
\put(30,45){\circle*{5}}
\put(30,60){\circle*{5}}
\put(30,75){\circle*{5}}
\put(30,90){\circle*{5}}
\put(30,105){\circle*{5}}
\put(30,120){\circle*{5}}
\put(30,135){\circle*{5}}
\put(30,150){\circle*{5}}
\put(30,165){\circle*{5}}
\put(30,180){\circle*{5}}
\put(30,195){\circle*{5}}
\put(30,210){\circle*{5}}
\put(30,225){\circle*{5}}
\put(30,240){\circle*{5}}
\put(30,255){\circle*{5}}
\put(30,270){\circle*{5}}
\put(30,285){\circle*{5}}
\put(30,300){\circle*{5}}
\put(30,315){\circle*{5}}
\put(30,330){\circle*{5}}
\put(30,345){\circle*{5}}
%
\put(30,3){\vector(0,1){9}}
\put(30,27){\vector(0,-1){9}}
\put(30,33){\vector(0,1){9}}
\put(30,57){\vector(0,-1){9}}
\put(30,63){\vector(0,1){9}}
\put(30,87){\vector(0,-1){9}}
\put(30,93){\vector(0,1){9}}
\put(30,117){\vector(0,-1){9}}
\put(30,123){\vector(0,1){9}}
\put(30,147){\vector(0,-1){9}}
\put(30,153){\vector(0,1){9}}
\put(30,177){\vector(0,-1){9}}
\put(30,183){\vector(0,1){9}}
\put(30,207){\vector(0,-1){9}}
\put(30,213){\vector(0,1){9}}
\put(30,237){\vector(0,-1){9}}
\put(30,243){\vector(0,1){9}}
\put(30,267){\vector(0,-1){9}}
\put(30,273){\vector(0,1){9}}
\put(30,297){\vector(0,-1){9}}
\put(30,303){\vector(0,1){9}}
\put(30,327){\vector(0,-1){9}}
\put(30,333){\vector(0,1){9}}
\put(0,132){\vector(0,-1){69}}
\put(0,138){\vector(0,1){69}}
\put(0,282){\vector(0,-1){69}}
\put(3,51){\vector(1,-2){24}}
\put(27,19){\vector(-2,3){23}}
\put(3,58){\vector(1,-1){24}}
\put(27,47){\vector(-2,1){23}}
\put(3,60){\vector(1,0){24}}
\put(27,73){\vector(-2,-1){23}}
\put(3,62){\vector(1,1){24}}
\put(27,101){\vector(-2,-3){23}}
\put(3,69){\vector(1,2){24}}
\put(27,135){\vector(-1,0){24}}
\put(3,201){\vector(1,-2){24}}
\put(27,169){\vector(-2,3){23}}
\put(3,208){\vector(1,-1){24}}
\put(27,197){\vector(-2,1){23}}
\put(3,210){\vector(1,0){24}}
\put(27,223){\vector(-2,-1){23}}
\put(3,212){\vector(1,1){24}}
\put(27,251){\vector(-2,-3){23}}
\put(3,219){\vector(1,2){24}}
\put(27,285){\vector(-1,0){24}}
\put(3,-2)
{
\put(-17,60){\small $-$}
\put(-17,135){\small $+$}
\put(-17,210){\small $-$}
\put(-17,285){\small $+$}
}
\put(4,-2)
{
\put(30,0){\small $+$}
\put(30,15){\small $-$}
\put(30,30){\small $+$}
\put(30,45){\small $-$}
\put(30,60){\small $+$}
\put(30,75){\small $-$}
\put(30,90){\small $+$}
\put(30,105){\small $-$}
\put(30,120){\small $+$}
\put(30,135){\small $-$}
\put(30,150){\small $+$}
\put(30,165){\small $-$}
\put(30,180){\small $+$}
\put(30,195){\small $-$}
\put(30,210){\small $+$}
\put(30,225){\small $-$}
\put(30,240){\small $+$}
\put(30,255){\small $-$}
\put(30,270){\small $+$}
\put(30,285){\small $-$}
\put(30,300){\small $+$}
\put(30,315){\small $-$}
\put(30,330){\small $+$}
\put(30,345){\small $-$}
}
}
\put(70,0)
{
\put(0,60){\circle{5}}
\put(0,135){\circle{5}}
\put(0,210){\circle{5}}
\put(0,285){\circle{5}}
\put(30,0){\circle*{5}}
\put(30,15){\circle*{5}}
\put(30,30){\circle*{5}}
\put(30,45){\circle*{5}}
\put(30,60){\circle*{5}}
\put(30,75){\circle*{5}}
\put(30,90){\circle*{5}}
\put(30,105){\circle*{5}}
\put(30,120){\circle*{5}}
\put(30,135){\circle*{5}}
\put(30,150){\circle*{5}}
\put(30,165){\circle*{5}}
\put(30,180){\circle*{5}}
\put(30,195){\circle*{5}}
\put(30,210){\circle*{5}}
\put(30,225){\circle*{5}}
\put(30,240){\circle*{5}}
\put(30,255){\circle*{5}}
\put(30,270){\circle*{5}}
\put(30,285){\circle*{5}}
\put(30,300){\circle*{5}}
\put(30,315){\circle*{5}}
\put(30,330){\circle*{5}}
\put(30,345){\circle*{5}}
%
\put(30,3){\vector(0,1){9}}
\put(30,27){\vector(0,-1){9}}
\put(30,33){\vector(0,1){9}}
\put(30,57){\vector(0,-1){9}}
\put(30,63){\vector(0,1){9}}
\put(30,87){\vector(0,-1){9}}
\put(30,93){\vector(0,1){9}}
\put(30,117){\vector(0,-1){9}}
\put(30,123){\vector(0,1){9}}
\put(30,147){\vector(0,-1){9}}
\put(30,153){\vector(0,1){9}}
\put(30,177){\vector(0,-1){9}}
\put(30,183){\vector(0,1){9}}
\put(30,207){\vector(0,-1){9}}
\put(30,213){\vector(0,1){9}}
\put(30,237){\vector(0,-1){9}}
\put(30,243){\vector(0,1){9}}
\put(30,267){\vector(0,-1){9}}
\put(30,273){\vector(0,1){9}}
\put(30,297){\vector(0,-1){9}}
\put(30,303){\vector(0,1){9}}
\put(30,327){\vector(0,-1){9}}
\put(30,333){\vector(0,1){9}}
\put(0,63){\vector(0,1){69}}
\put(0,207){\vector(0,-1){69}}
\put(0,213){\vector(0,1){69}}
\put(27,19){\vector(-2,3){23}}
\put(3,58){\vector(1,-1){24}}
\put(27,47){\vector(-2,1){23}}
\put(3,60){\vector(1,0){24}}
\put(27,73){\vector(-2,-1){23}}
\put(3,62){\vector(1,1){24}}
\put(27,101){\vector(-2,-3){23}}
\put(3,133){\vector(2,-1){24}}
\put(27,135){\vector(-1,0){23}}
\put(3,137){\vector(2,1){24}}
\put(27,169){\vector(-2,3){23}}
\put(3,208){\vector(1,-1){24}}
\put(27,197){\vector(-2,1){23}}
\put(3,210){\vector(1,0){24}}
\put(27,223){\vector(-2,-1){23}}
\put(3,212){\vector(1,1){24}}
\put(27,251){\vector(-2,-3){23}}
\put(3,283){\vector(2,-1){24}}
\put(27,285){\vector(-1,0){23}}
\put(3,287){\vector(2,1){24}}
\put(3,-2)
{
\put(-17,60){\small $+$}
\put(-17,135){\small $-$}
\put(-17,210){\small $+$}
\put(-17,285){\small $-$}
}
\put(4,-2)
{
\put(30,0){\small $+$}
\put(30,15){\small $-$}
\put(30,30){\small $+$}
\put(30,45){\small $-$}
\put(30,60){\small $+$}
\put(30,75){\small $-$}
\put(30,90){\small $+$}
\put(30,105){\small $-$}
\put(30,120){\small $+$}
\put(30,135){\small $-$}
\put(30,150){\small $+$}
\put(30,165){\small $-$}
\put(30,180){\small $+$}
\put(30,195){\small $-$}
\put(30,210){\small $+$}
\put(30,225){\small $-$}
\put(30,240){\small $+$}
\put(30,255){\small $-$}
\put(30,270){\small $+$}
\put(30,285){\small $-$}
\put(30,300){\small $+$}
\put(30,315){\small $-$}
\put(30,330){\small $+$}
\put(30,345){\small $-$}
}
}
\put(140,0)
{
\put(0,60){\circle{5}}
\put(0,135){\circle{5}}
\put(0,210){\circle{5}}
\put(0,285){\circle{5}}
\put(30,0){\circle*{5}}
\put(30,15){\circle*{5}}
\put(30,30){\circle*{5}}
\put(30,45){\circle*{5}}
\put(30,60){\circle*{5}}
\put(30,75){\circle*{5}}
\put(30,90){\circle*{5}}
\put(30,105){\circle*{5}}
\put(30,120){\circle*{5}}
\put(30,135){\circle*{5}}
\put(30,150){\circle*{5}}
\put(30,165){\circle*{5}}
\put(30,180){\circle*{5}}
\put(30,195){\circle*{5}}
\put(30,210){\circle*{5}}
\put(30,225){\circle*{5}}
\put(30,240){\circle*{5}}
\put(30,255){\circle*{5}}
\put(30,270){\circle*{5}}
\put(30,285){\circle*{5}}
\put(30,300){\circle*{5}}
\put(30,315){\circle*{5}}
\put(30,330){\circle*{5}}
\put(30,345){\circle*{5}}
%
\put(30,3){\vector(0,1){9}}
\put(30,27){\vector(0,-1){9}}
\put(30,33){\vector(0,1){9}}
\put(30,57){\vector(0,-1){9}}
\put(30,63){\vector(0,1){9}}
\put(30,87){\vector(0,-1){9}}
\put(30,93){\vector(0,1){9}}
\put(30,117){\vector(0,-1){9}}
\put(30,123){\vector(0,1){9}}
\put(30,147){\vector(0,-1){9}}
\put(30,153){\vector(0,1){9}}
\put(30,177){\vector(0,-1){9}}
\put(30,183){\vector(0,1){9}}
\put(30,207){\vector(0,-1){9}}
\put(30,213){\vector(0,1){9}}
\put(30,237){\vector(0,-1){9}}
\put(30,243){\vector(0,1){9}}
\put(30,267){\vector(0,-1){9}}
\put(30,273){\vector(0,1){9}}
\put(30,297){\vector(0,-1){9}}
\put(30,303){\vector(0,1){9}}
\put(30,327){\vector(0,-1){9}}
\put(30,333){\vector(0,1){9}}
\put(0,132){\vector(0,-1){69}}
\put(0,138){\vector(0,1){69}}
\put(0,282){\vector(0,-1){69}}
\put(3,58){\vector(1,-1){24}}
\put(27,47){\vector(-2,1){23}}
\put(3,60){\vector(1,0){24}}
\put(27,73){\vector(-2,-1){23}}
\put(3,62){\vector(1,1){24}}
\put(27,107){\vector(-1,1){23}}
\put(3,133){\vector(2,-1){24}}
\put(27,135){\vector(-1,0){23}}
\put(3,137){\vector(2,1){24}}
\put(27,163){\vector(-1,-1){23}}
\put(3,208){\vector(1,-1){24}}
\put(27,197){\vector(-2,1){23}}
\put(3,210){\vector(1,0){24}}
\put(27,223){\vector(-2,-1){23}}
\put(3,212){\vector(1,1){24}}
\put(27,257){\vector(-1,1){23}}
\put(3,283){\vector(2,-1){24}}
\put(27,285){\vector(-1,0){23}}
\put(3,287){\vector(2,1){24}}
\put(27,313){\vector(-1,-1){23}}
\put(3,-2)
{
\put(-17,60){\small $-$}
\put(-17,135){\small $+$}
\put(-17,210){\small $-$}
\put(-17,285){\small $+$}
}
\put(4,-2)
{
\put(30,0){\small $+$}
\put(30,15){\small $-$}
\put(30,30){\small $+$}
\put(30,45){\small $-$}
\put(30,60){\small $+$}
\put(30,75){\small $-$}
\put(30,90){\small $+$}
\put(30,105){\small $-$}
\put(30,120){\small $+$}
\put(30,135){\small $-$}
\put(30,150){\small $+$}
\put(30,165){\small $-$}
\put(30,180){\small $+$}
\put(30,195){\small $-$}
\put(30,210){\small $+$}
\put(30,225){\small $-$}
\put(30,240){\small $+$}
\put(30,255){\small $-$}
\put(30,270){\small $+$}
\put(30,285){\small $-$}
\put(30,300){\small $+$}
\put(30,315){\small $-$}
\put(30,330){\small $+$}
\put(30,345){\small $-$}
}
}
%
\put(210,0)
{
\put(0,60){\circle{5}}
\put(0,135){\circle{5}}
\put(0,210){\circle{5}}
\put(0,285){\circle{5}}
\put(30,0){\circle*{5}}
\put(30,15){\circle*{5}}
\put(30,30){\circle*{5}}
\put(30,45){\circle*{5}}
\put(30,60){\circle*{5}}
\put(30,75){\circle*{5}}
\put(30,90){\circle*{5}}
\put(30,105){\circle*{5}}
\put(30,120){\circle*{5}}
\put(30,135){\circle*{5}}
\put(30,150){\circle*{5}}
\put(30,165){\circle*{5}}
\put(30,180){\circle*{5}}
\put(30,195){\circle*{5}}
\put(30,210){\circle*{5}}
\put(30,225){\circle*{5}}
\put(30,240){\circle*{5}}
\put(30,255){\circle*{5}}
\put(30,270){\circle*{5}}
\put(30,285){\circle*{5}}
\put(30,300){\circle*{5}}
\put(30,315){\circle*{5}}
\put(30,330){\circle*{5}}
\put(30,345){\circle*{5}}
%
\put(30,3){\vector(0,1){9}}
\put(30,27){\vector(0,-1){9}}
\put(30,33){\vector(0,1){9}}
\put(30,57){\vector(0,-1){9}}
\put(30,63){\vector(0,1){9}}
\put(30,87){\vector(0,-1){9}}
\put(30,93){\vector(0,1){9}}
\put(30,117){\vector(0,-1){9}}
\put(30,123){\vector(0,1){9}}
\put(30,147){\vector(0,-1){9}}
\put(30,153){\vector(0,1){9}}
\put(30,177){\vector(0,-1){9}}
\put(30,183){\vector(0,1){9}}
\put(30,207){\vector(0,-1){9}}
\put(30,213){\vector(0,1){9}}
\put(30,237){\vector(0,-1){9}}
\put(30,243){\vector(0,1){9}}
\put(30,267){\vector(0,-1){9}}
\put(30,273){\vector(0,1){9}}
\put(30,297){\vector(0,-1){9}}
\put(30,303){\vector(0,1){9}}
\put(30,327){\vector(0,-1){9}}
\put(30,333){\vector(0,1){9}}
\put(0,63){\vector(0,1){69}}
\put(0,207){\vector(0,-1){69}}
\put(0,213){\vector(0,1){69}}
\put(27,47){\vector(-2,1){23}}
\put(3,60){\vector(1,0){24}}
\put(27,73){\vector(-2,-1){23}}
\put(3,128){\vector(2,-3){24}}
\put(27,107){\vector(-1,1){23}}
\put(3,133){\vector(2,-1){24}}
\put(27,135){\vector(-1,0){23}}
\put(3,137){\vector(2,1){24}}
\put(3,142){\vector(2,3){23}}
\put(27,163){\vector(-1,-1){24}}
\put(27,197){\vector(-2,1){23}}
\put(3,210){\vector(1,0){24}}
\put(27,223){\vector(-2,-1){23}}
\put(3,278){\vector(2,-3){24}}
\put(27,257){\vector(-1,1){23}}
\put(3,283){\vector(2,-1){24}}
\put(27,285){\vector(-1,0){23}}
\put(3,287){\vector(2,1){24}}
\put(3,292){\vector(2,3){23}}
\put(27,313){\vector(-1,-1){24}}
\put(3,-2)
{
\put(-17,60){\small $+$}
\put(-17,135){\small $-$}
\put(-17,210){\small $+$}
\put(-17,285){\small $-$}
}
\put(4,-2)
{
\put(30,0){\small $+$}
\put(30,15){\small $-$}
\put(30,30){\small $+$}
\put(30,45){\small $-$}
\put(30,60){\small $+$}
\put(30,75){\small $-$}
\put(30,90){\small $+$}
\put(30,105){\small $-$}
\put(30,120){\small $+$}
\put(30,135){\small $-$}
\put(30,150){\small $+$}
\put(30,165){\small $-$}
\put(30,180){\small $+$}
\put(30,195){\small $-$}
\put(30,210){\small $+$}
\put(30,225){\small $-$}
\put(30,240){\small $+$}
\put(30,255){\small $-$}
\put(30,270){\small $+$}
\put(30,285){\small $-$}
\put(30,300){\small $+$}
\put(30,315){\small $-$}
\put(30,330){\small $+$}
\put(30,345){\small $-$}
}
}
%
\put(280,0)
{
\put(0,60){\circle{5}}
\put(0,135){\circle{5}}
\put(0,210){\circle{5}}
\put(0,285){\circle{5}}
\put(30,0){\circle*{5}}
\put(30,15){\circle*{5}}
\put(30,30){\circle*{5}}
\put(30,45){\circle*{5}}
\put(30,60){\circle*{5}}
\put(30,75){\circle*{5}}
\put(30,90){\circle*{5}}
\put(30,105){\circle*{5}}
\put(30,120){\circle*{5}}
\put(30,135){\circle*{5}}
\put(30,150){\circle*{5}}
\put(30,165){\circle*{5}}
\put(30,180){\circle*{5}}
\put(30,195){\circle*{5}}
\put(30,210){\circle*{5}}
\put(30,225){\circle*{5}}
\put(30,240){\circle*{5}}
\put(30,255){\circle*{5}}
\put(30,270){\circle*{5}}
\put(30,285){\circle*{5}}
\put(30,300){\circle*{5}}
\put(30,315){\circle*{5}}
\put(30,330){\circle*{5}}
\put(30,345){\circle*{5}}
%
\put(30,3){\vector(0,1){9}}
\put(30,27){\vector(0,-1){9}}
\put(30,33){\vector(0,1){9}}
\put(30,57){\vector(0,-1){9}}
\put(30,63){\vector(0,1){9}}
\put(30,87){\vector(0,-1){9}}
\put(30,93){\vector(0,1){9}}
\put(30,117){\vector(0,-1){9}}
\put(30,123){\vector(0,1){9}}
\put(30,147){\vector(0,-1){9}}
\put(30,153){\vector(0,1){9}}
\put(30,177){\vector(0,-1){9}}
\put(30,183){\vector(0,1){9}}
\put(30,207){\vector(0,-1){9}}
\put(30,213){\vector(0,1){9}}
\put(30,237){\vector(0,-1){9}}
\put(30,243){\vector(0,1){9}}
\put(30,267){\vector(0,-1){9}}
\put(30,273){\vector(0,1){9}}
\put(30,297){\vector(0,-1){9}}
\put(30,303){\vector(0,1){9}}
\put(30,327){\vector(0,-1){9}}
\put(30,333){\vector(0,1){9}}
\put(0,132){\vector(0,-1){69}}
\put(0,138){\vector(0,1){69}}
\put(0,282){\vector(0,-1){69}}
\put(3,60){\vector(1,0){24}}
\put(27,77){\vector(-1,2){23}}
\put(3,128){\vector(2,-3){24}}
\put(27,107){\vector(-1,1){23}}
\put(3,133){\vector(2,-1){24}}
\put(27,135){\vector(-1,0){23}}
\put(3,137){\vector(2,1){24}}
\put(27,163){\vector(-1,-1){23}}
\put(3,142){\vector(2,3){24}}
\put(27,193){\vector(-1,-2){23}}
\put(3,210){\vector(1,0){24}}
\put(27,227){\vector(-1,2){23}}
\put(3,278){\vector(2,-3){24}}
\put(27,257){\vector(-1,1){23}}
\put(3,283){\vector(2,-1){24}}
\put(27,285){\vector(-1,0){23}}
\put(3,287){\vector(2,1){24}}
\put(27,313){\vector(-1,-1){23}}
\put(3,292){\vector(2,3){24}}
\put(27,343){\vector(-1,-2){23}}
\put(3,-2)
{
\put(-17,60){\small $-$}
\put(-17,135){\small $+$}
\put(-17,210){\small $-$}
\put(-17,285){\small $+$}
}
\put(4,-2)
{
\put(30,0){\small $+$}
\put(30,15){\small $-$}
\put(30,30){\small $+$}
\put(30,45){\small $-$}
\put(30,60){\small $+$}
\put(30,75){\small $-$}
\put(30,90){\small $+$}
\put(30,105){\small $-$}
\put(30,120){\small $+$}
\put(30,135){\small $-$}
\put(30,150){\small $+$}
\put(30,165){\small $-$}
\put(30,180){\small $+$}
\put(30,195){\small $-$}
\put(30,210){\small $+$}
\put(30,225){\small $-$}
\put(30,240){\small $+$}
\put(30,255){\small $-$}
\put(30,270){\small $+$}
\put(30,285){\small $-$}
\put(30,300){\small $+$}
\put(30,315){\small $-$}
\put(30,330){\small $+$}
\put(30,345){\small $-$}
}
}
\put(10,-20){$Q_1$}
\put(80,-20){$Q_2$}
\put(150,-20){$Q_3$}
\put(220,-20){$Q_4$}
\put(290,-20){$Q_5$}
\put(-50,170.5){${\scriptstyle\ell -1}\left\{ \makebox(0,115){}\right.$}
\put(330,170.5){$\left. \makebox(0,175){}\right\}{\scriptstyle t\ell -1}$}
\end{picture}
\caption{The quiver $Q_{\ell}(M_t)$ with $t=5$  for even $\ell$
(upper) and for odd $\ell$ (lower),
where we identify the right columns in all the quivers
$Q_1$, \dots, $Q_5$.}
\label{fig:quiverG}
\end{figure}

With the Cartan matrix  $M_t$ and $\ell\geq 2$ we associate
 a quiver $Q_{\ell}(M_t)$ as below.
First, as a rather general example, the case $t=5$ is given in
 Fig.~\ref{fig:quiverG},
where the right columns in  the five quivers $Q_1$,\dots,$Q_5$ are
identified.
Also we assign the empty or filled circle $\circ$/$\bullet$
and the sign +/$-$ to  each vertex as shown.
For a general odd $t$,  the quiver $Q_{\ell}(M_t)$ is defined
by naturally extending the case $t=5$.
Namely, we consider $t$ quivers $Q_1$,\dots,$Q_t$.
In each quiver $Q_i$ there are
$\ell-1$ vertices (with $\circ$) in the left column and
$t\ell-1$ vertices (with $\bullet$) in the right column.
The arrows are put as clearly indicated by the example
in Fig.~\ref{fig:quiverG}.
The right columns in all the quivers $Q_1$, \dots, $Q_t$
 are identified.

Let us choose  the index set $\mathbf{I}$
of the vertices of $Q_{\ell}(M_t)$
so that $\mathbf{i}=(i,i')\in \mathbf{I}$ represents
the vertex 
at the $i'$th row (from the bottom)
of the left column in $Q_i$ for $i=1,\dots,t$,
 and the one of the right column in any quiver
for $i=t+1$.
Thus, $i=1,\dots,t+1$, and $i'=1,\dots,\ell-1$ if $i\neq t+1$
and $i'=1,\dots,t\ell-1$ if $i=t+1$.

For $k\in \{1,\dots,t\}$,
let $\mathbf{I}^{\circ}_{+,k}$
(resp.\ $\mathbf{I}^{\circ}_{-,k}$)
denote the set of the vertices $\mathbf{i}$ in $Q_k$  with
property $\circ$ and $+$ (resp.\ $\circ$ and $-$).
Similarly, let $\mathbf{I}^{\bullet}_+$
(resp.\ $\mathbf{I}^{\bullet}_-$)
denote the set of the vertices $\mathbf{i}$ with
property $\bullet$ and $+$ (resp.\ $\bullet$ and $-$).
We define composite mutations,
\begin{align}
\label{eq:mupm2}
\mu^{\circ}_{+,k}=\prod_{\mathbf{i}\in\mathbf{I}^{\circ}_{+,k}}
\mu_{\mathbf{i}},
\quad
\mu^{\circ}_{-,k}=\prod_{\mathbf{i}\in\mathbf{I}^{\circ}_{-,k}}
\mu_{\mathbf{i}},
\quad
\mu^{\bullet}_+=\prod_{\mathbf{i}\in\mathbf{I}^{\bullet}_+}
\mu_{\mathbf{i}},
\quad
\mu^{\bullet}_-=\prod_{\mathbf{i}\in\mathbf{I}^{\bullet}_-}
\mu_{\mathbf{i}}.
\end{align}
Note that they do not depend on the order of the product.

For a permutation $w$ of $\{1,\dots,t \}$,
let $\tilde{w}$ be the permutation of
$\mathbf{I}$ such that
$\tilde{w}(i,i')=(w(i),i')$ for $i\neq t+1$
and $(t+1,i')$ for $i=t+1$.
Let $\tilde{w}(Q_{\ell}(M_t))$
denote the
quiver induced from $Q_{\ell}(M_t)$
 by
$\tilde{w}$.
Namely, if there is an arrow
 $\mathbf{i}\rightarrow
\mathbf{j}$ in $Q_{\ell}(M_t)$,
then, there is an arrow
$\tilde{w}(\mathbf{i})
\rightarrow
\tilde{w}(\mathbf{j})
$
in  $\tilde{w}(Q_{\ell}(M_t))$.
For a quiver $Q$,
let $Q^{\mathrm{op}}$ denote the opposite quiver.

\begin{Lemma}
\label{lem:GQmut}
Let $Q(0):=Q_{\ell}(M_t)$.
We have the following periodic sequence of mutations of quivers:
\begin{align}
\label{eq:GB2}
\begin{matrix}
Q(0)
& 
\displaystyle
\mathop{\longleftrightarrow}^{
\mu^{\bullet}_+
\mu^{\circ}_{+,1}}
& Q(\frac{1}{t})
&
\displaystyle
\mathop{\longleftrightarrow}^{\mu^{\bullet}_-
\mu^{\circ}_{+,t-1}}
&
Q(\frac{2}{t})
&
\displaystyle
\mathop{\longleftrightarrow}^{\mu^{\bullet}_+
\mu^{\circ}_{+,3}}
&
Q(\frac{3}{t})
&
\displaystyle
\mathop{\longleftrightarrow}^{\mu^{\bullet}_-
\mu^{\circ}_{+,t-3}}
&
Q(\frac{4}{t})
\\
&
\displaystyle
\mathop{\longleftrightarrow}^{\mu^{\bullet}_+
\mu^{\circ}_{+,5}}
&
&
\cdots
&&
\displaystyle
\mathop{\longleftrightarrow}^{\mu^{\bullet}_-
\mu^{\circ}_{+,2}}
&
Q(\frac{t-1}{t})
&
\displaystyle
\mathop{\longleftrightarrow}^{\mu^{\bullet}_+
\mu^{\circ}_{+,t}}
&
Q(1)
&
\\
& 
\displaystyle
\mathop{\longleftrightarrow}^{\mu^{\bullet}_-
\mu^{\circ}_{-,1}}
&
Q(\frac{t+1}{t})
&
\displaystyle
\mathop{\longleftrightarrow}^{\mu^{\bullet}_+
\mu^{\circ}_{-,t-1}}
&
Q(\frac{t+2}{t})
&
\displaystyle
\mathop{\longleftrightarrow}^{\mu^{\bullet}_-
\mu^{\circ}_{-,3}}
&
Q(\frac{t+3}{t})
&
\displaystyle
\mathop{\longleftrightarrow}^{\mu^{\bullet}_+
\mu^{\circ}_{-,t-3}}
&
Q(\frac{t+4}{t})
\\
&
\displaystyle
\mathop{\longleftrightarrow}^{\mu^{\bullet}_-
\mu^{\circ}_{-,5}}
&
&
\cdots
&
&
\displaystyle
\mathop{\longleftrightarrow}^{\mu^{\bullet}_+
\mu^{\circ}_{-,2}}
&
Q(\frac{2t-1}{t})
&
\displaystyle
\mathop{\longleftrightarrow}^{\mu^{\bullet}_-
\mu^{\circ}_{-,t}}
&
Q(2)=Q(0).
&
\\
\end{matrix}
\end{align}
Here, the quiver $Q(p/t)$ $(p=1,\dots,2t)$ is defined by
\begin{align}
\label{eq:Qpt}
Q(p/t):=
\begin{cases}
\tilde{w}_p(Q(0))^{\mathrm{op}}&\mbox{$p$: odd}\\
\tilde{w}_p(Q(0))&\mbox{$p$: even},
\end{cases}
\end{align}
and $w_p$ is a permutation
of $\{1,\dots,t\}$ defined by
\begin{align}
w_p&=
\begin{cases}
 r_+ r_- \cdots r_+ \ \mbox{\em{($p$ terms)}} &\mbox{$p$: odd}\\
 r_+ r_- \cdots r_- \ \mbox{\em{($p$ terms)}}& \mbox{$p$: even},
\end{cases}\\
r_+&=(23)(45)\cdots (r-1,r),\quad
r_-=(12)(34)\cdots (r-2,r-1),
\end{align}
where $(ij)$ is the transposition of $i$ and $j$.
\end{Lemma}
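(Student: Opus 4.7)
The plan is to verify \eqref{eq:GB2} by direct computation, propagating the quiver shape through each composite mutation and identifying the resulting quiver with $\tilde{w}_p(Q(0))$ or its opposite according to the parity of $p$. The approach parallels the pattern established in Refs.~\refcite{Kuniba09}, \refcite{Inoue10a}, and \refcite{Inoue10b} for the cases $t=1,2,3$, and amounts to a careful induction on the step index $p$.

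First I would check that each composite mutation in \eqref{eq:mupm2} is well-defined, i.e., that the simple mutations being composed commute pairwise. For $\mu^\circ_{\pm,k}$ and $\mu^\bullet_\pm$ this reduces to the observation that within each column of the current quiver the signs alternate, so same-sign vertices are at distance at least two along the column and hence non-adjacent. I would also check that this non-adjacency persists along the sequence by inspection of the intermediate $Q(p/t)$, ensuring that successive applications of composite mutations are unambiguous.

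Next I would carry out the base step in detail: compute $Q(1/t)=\mu^\bullet_+\mu^\circ_{+,1}(Q(0))$ and check that it coincides with $\tilde{w}_1(Q(0))^{\mathrm{op}}$, where $w_1=r_+$. Since the quiver consists of the $t$ sub-quivers $Q_1,\dots,Q_t$ glued along a common right column, the effect of the composite mutation decomposes into three pieces: arrow reversals within the left column of $Q_1$ and within the shared right column, caused by the mutation rule applied at the endpoints of those arrows; modifications of the cross-arrows between the left column of $Q_1$ and the right column; and induced modifications of the cross-arrows of the other sub-quivers $Q_k$ ($k\neq 1$) that come from new arrows created through the right column during $\mu^\bullet_+$. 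Each of these can be read off locally from the mutation rule \eqref{eq:Bmut} applied to the mutated vertex and its neighbors.

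The general inductive step is analogous: assuming $Q(p/t)$ agrees with the right-hand side of \eqref{eq:Qpt}, I would apply the prescribed composite mutation and track the same three types of transformations. The choice of sign $\pm$ in $\mu^\bullet_\pm$ and of the index $k$ in $\mu^\circ_{\pm,k}$ is precisely what is needed so that the mutation is performed on an independent set of vertices whose collective effect on the shared right column implements a permutation of the sub-quivers by $r_+$ or $r_-$, followed by a global arrow reversal at odd steps. The main obstacle is the combinatorial bookkeeping of the cross-arrows: one must verify that after each composite mutation the remaining cross-arrows still fit into the shape $\tilde{w}_p(Q(0))^{(\mathrm{op})}$, and that after the full $2t$-step cycle the arrow pattern returns exactly to $Q(0)$, which is consistent with $w_{2t}$ acting trivially on the relevant data. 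Once the bookkeeping is made explicit for the first few steps (essentially, a generalization of the $t=3$ picture), the pattern propagates inductively, and the finiteness of local configurations near each mutated vertex reduces the verification to a finite check.
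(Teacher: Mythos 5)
Your proposal is correct and follows essentially the same route as the paper: a direct verification that each composite mutation transforms the quiver as claimed, with the commutativity of the constituent mutations guaranteed by non-adjacency of same-sign vertices. The paper organizes the same check by tracking the trajectory of each subquiver $Q_i$ through the full $2t$-step cycle (each passing through opposite quivers and permuted positions) rather than by formal induction on $p$, but this is only a difference of bookkeeping, not of substance.
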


\begin{proof}
Let $Q_1$, \dots, $Q_{t}$ be the subquivers in
the definition of 
$Q_{\ell}(M_t)$ as  in Fig.~\ref{fig:quiverG}.
By the sequence of mutations \eqref{eq:GB2},
one can easily check that
$Q_1$ mutates as
\begin{align}
Q_1 \leftrightarrow Q_1^{\mathrm{op}}\leftrightarrow
Q_2 \leftrightarrow Q_3^{\mathrm{op}}\leftrightarrow
\cdots
Q_t^{\mathrm{op}} \leftrightarrow Q_t\leftrightarrow
\cdots
Q_3 \leftrightarrow Q_2^{\mathrm{op}}\leftrightarrow
Q_1,
\end{align}
$Q_2$ mutates as
\begin{align}
Q_2 \leftrightarrow Q_3^{\mathrm{op}}\leftrightarrow
\cdots
\leftrightarrow
Q_t^{\mathrm{op}} \leftrightarrow Q_t\leftrightarrow
\cdots
\leftrightarrow Q_2^{\mathrm{op}}\leftrightarrow
Q_1 \leftrightarrow Q_1^{\mathrm{op}}\leftrightarrow
Q_2,
\end{align}
$Q_3$ mutates as
\begin{align}
Q_3 \leftrightarrow Q_2^{\mathrm{op}}\leftrightarrow
Q_1 \leftrightarrow Q_1^{\mathrm{op}}\leftrightarrow
\cdots
\leftrightarrow
Q_t^{\mathrm{op}} \leftrightarrow Q_t\leftrightarrow
\cdots
\leftrightarrow
 Q_4^{\mathrm{op}}\leftrightarrow Q_3,
\end{align}
and so on.
The result is summarized as \eqref{eq:Qpt}.
\end{proof}

\begin{Example}
The mutation sequence
\eqref{eq:GB2} for $t=5$ is explicitly given 
in Figs. \ref{fig:labelxG1} and \ref{fig:labelxG2},
where only a part of each quiver is presented.
(Caution: the mutations of the top and bottom arrows
may look erroneous but they are correct because of the
effect from the omitted part.)
The encircled vertices are the mutation
points of \eqref{eq:GB2} in the forward direction.
\end{Example}

\begin{figure}[t]
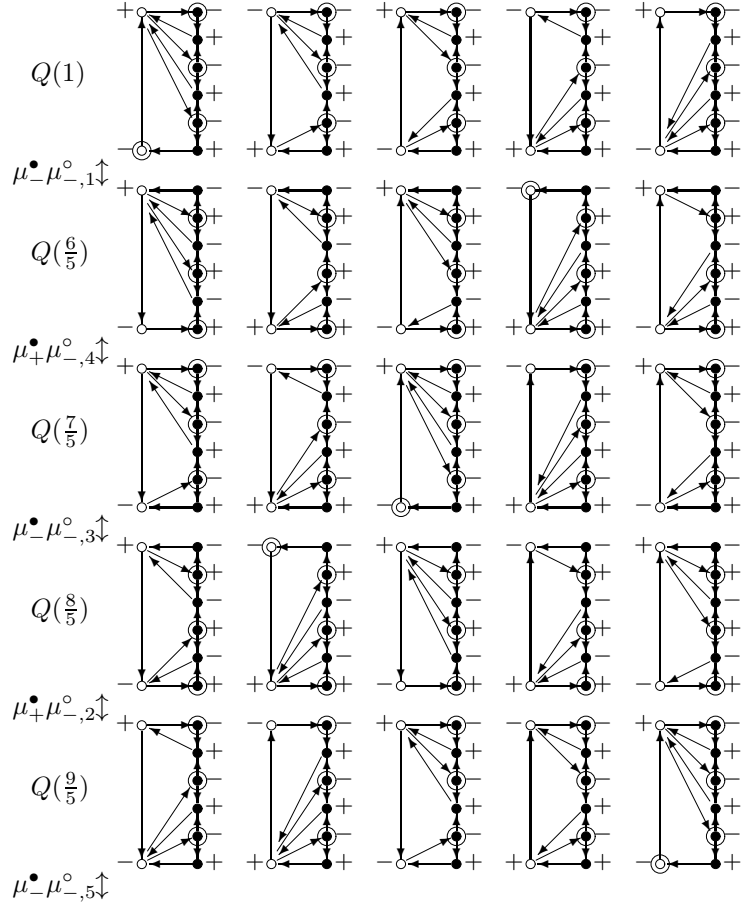

\setlength{\unitlength}{0.7pt}

%

%
\caption{(Continues from Fig. \ref{fig:labelxG1}.)
}
\label{fig:labelxG2}
\end{figure}

\subsection{Embedding maps}
Let
 $B=B_{\ell}(M_t)$
be the skew-symmetric matrix corresponding 
to the quiver $Q_{\ell}(M_t)$.
Let $\mathcal{A}(B,x,y)$ 
be the cluster algebra
 with coefficients
in the universal
semifield
$\mathbb{Q}_{\mathrm{sf}}(y)$,
and let
$\mathcal{G}(B,y)$
be
the coefficient group
associated with $\mathcal{A}(B,x,y)$
 as in Section
\ref{sec:cluster}.

In view of Lemma \ref{lem:GQmut}
we set $x(0)=x$, $y(0)=y$ and define 
clusters $x(u)=(x_{\mathbf{i}}(u))_{\mathbf{i}\in \mathbf{I}}$
 ($u\in \frac{1}{t}\mathbb{Z}$)
 and coefficient tuples $y(u)=(y_\mathbf{i}(u))_{\mathbf{i}\in \mathbf{I}}$
 ($u\in \frac{1}{t}\mathbb{Z}$)
by the sequence of mutations
\begin{align}
\label{eq:Gmutseq}
\begin{matrix}
\cdots
&
\displaystyle
\mathop{\longleftrightarrow}^{\mu^{\bullet}_-
\mu^{\circ}_{-,t}}
&
(B(0),x(0),y(0))
& 
\displaystyle
\mathop{\longleftrightarrow}^{\mu^{\bullet}_+
\mu^{\circ}_{+,1}}
& (B(\frac{1}{t}),x(\frac{1}{t}),y(\frac{1}{t}))\\
&
\displaystyle
\mathop{\longleftrightarrow}^{\mu^{\bullet}_-
\mu^{\circ}_{+,t-1}}
&
\cdots
&
\displaystyle
\mathop{\longleftrightarrow}^{\mu^{\bullet}_-
\mu^{\circ}_{-,t}}
&
(B(2),x(2),y(2))
&
\displaystyle
\mathop{\longleftrightarrow}^{\mu^{\bullet}_+
\mu^{\circ}_{+,1}}
&
\cdots,
\\
\end{matrix}
\end{align}
where $B(u)$ is the skew-symmetric matrix corresponding to
$Q(u)$.

For  $(\mathbf{i},u)\in
 \mathbf{I}\times \frac{1}{t}\mathbb{Z}$,
we set the parity condition $\mathbf{p}_+$ by
\begin{align}
\label{eq:GQparity}
\mathbf{p}_+:&
\begin{cases}
 \mathbf{i}\in 
\mathbf{I}^{\bullet}_+ \sqcup 
\mathbf{I}^{\circ}_{+,p+1}
& u\equiv \frac{p}{t}, 0\leq p\leq t-1, \mbox{$p$: even}\\
 \mathbf{i}\in 
\mathbf{I}^{\bullet}_- \sqcup 
\mathbf{I}^{\circ}_{+,t-p}
& u\equiv \frac{p}{t}, 0\leq p\leq t-1, \mbox{$p$: odd}\\
 \mathbf{i}\in 
\mathbf{I}^{\bullet}_+ \sqcup 
\mathbf{I}^{\circ}_{-,2t-p}
& u\equiv \frac{p}{t}, t\leq p\leq 2t-1, \mbox{$p$: even}\\
 \mathbf{i}\in 
\mathbf{I}^{\bullet}_- \sqcup 
\mathbf{I}^{\circ}_{-,p+1-t}
& u\equiv \frac{p}{t}, t\leq p\leq 2t-1, \mbox{$p$: odd},\\
\end{cases}
\end{align}
where $\equiv$ is modulo $2\mathbb{Z}$.
We define the condition $\mathbf{p}_-$ by
$(\mathbf{i},u):\mathbf{p}_-   \Longleftrightarrow 
(\mathbf{i},u-1/t):\mathbf{p}_+$.
Plainly speaking,
each $(\mathbf{i},u):\mathbf{p}_+$
(resp. $\mathbf{p}_-$)
is a mutation point of \eqref{eq:Gmutseq} in the forward
(resp. backward) direction of $u$.

There is a correspondence between the parity condition
$\mathbf{p}_{+}$ here and $\mathbf{P}_{+}$,
$\mathbf{P}'_{+}$
in \eqref{eq:GPcond}.

\begin{Lemma}
\label{lem:gmap}
Below $\equiv$ means the equivalence modulo $2\mathbb{Z}$.
\par
(i)
The map
$g: \mathcal{I}_{\ell+}\rightarrow 
\{ (\mathbf{i},u): \mathbf{p}_+
\}
$
\begin{align}
\textstyle
(a,m,u-\frac{d_a}{t})\mapsto 
\begin{cases}
((2j+1,m),u)& a= 1;
m+u\equiv \frac{2j}{t}\\
&(j=0,1,\dots,(t-1)/2)\\
((2t-2j,m),u)& a= 1;
m+u\equiv \frac{2j}{t}\\
&(j=(t+1)/2,\dots,t-1)\\
((t+1,m),u)& \mbox{\rm  $a=2$}\\
\end{cases}
\end{align}
is a bijection.

\par
(ii)
The map
$g': \mathcal{I}'_{\ell+}\rightarrow 
\{ (\mathbf{i},u): \mathbf{p}_+
\}$
\begin{align}
\label{eq:g'}
(a,m,u)\mapsto 
\begin{cases}
((2j+1,m),u)& a= 1;
m+u\equiv \frac{2j}{t}\\
&(j=0,1,\dots,(t-1)/2)\\
((2t-2j,m),u)& a= 1;
m+u\equiv \frac{2j}{t}\\
&(j=(t+1)/2,\dots,t-1)\\
((t+1,m),u)& \mbox{\rm  $a=2$}\\
\end{cases}
\end{align}
is a bijection.
\end{Lemma}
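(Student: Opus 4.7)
The plan is to reduce (ii) to (i) first, and then to establish (i) by checking injectivity, well-definedness, and surjectivity separately, each of which reduces to elementary parity bookkeeping.

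For the reduction, I would observe that the right-hand sides defining $g$ and $g'$ are \emph{identical} as functions of $(a,m,u)$, where $u$ denotes the second coordinate of the image in both cases. By the equivalence \eqref{eq:PP'}, the shift $\sigma(a,m,u) := (a,m,u - d_a/t)$ is a bijection $\mathcal{I}'_{\ell+}\to\mathcal{I}_{\ell+}$, and by construction $g' = g \circ \sigma$. Hence (ii) follows at once from (i).

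Within (i), injectivity is immediate: if $g(a,m,v) = (\mathbf{i},u)$, the first coordinate of $\mathbf{i}$ determines $a$ (it equals $t+1$ iff $a = 2$); then $m$ is the second coordinate of $\mathbf{i}$, and $v = u - d_a/t$. Well-definedness and surjectivity then reduce to matching the two subcases of the formula for $g$ against the four cases of $\mathbf{p}_+$ in \eqref{eq:GQparity}. The inputs needed are: (a) the sign convention read from Fig.~\ref{fig:quiverG}, namely that the left-column vertex $(i,i')$ with $i \leq t$ carries sign $+$ iff $i+i'$ is odd, while the right-column vertex $(t+1,i')$ carries sign $+$ iff $i'$ is odd; and (b) elementary arithmetic relating the integer $j$ of \eqref{eq:g'}, defined by $t(m+u) \equiv 2j \pmod{2t}$, to the residue $p := tu \pmod{2t}$.

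Concretely, when $a=1$ and $m$ is even, the condition $(1,m,v) \in \mathcal{I}_{\ell+}$ forces $tv$ odd, so $tu = tv + t$ is even and $2j \equiv p \pmod{2t}$ gives $j = p/2$. The formula sends $(1,m,v)$ to column $2j+1 = p+1$ when $j \leq (t-1)/2$ (equivalently $p \leq t-1$), landing in $\mathbf{I}^\circ_{+,p+1}$, matching the first case of \eqref{eq:GQparity}; and to column $2t-2j = 2t-p$ when $j \geq (t+1)/2$, landing in $\mathbf{I}^\circ_{-,2t-p}$, matching the third case. The three remaining parity combinations $(a, m \bmod 2)$ are handled by analogous computations, the two $a=2$ cases landing in $\mathbf{I}^\bullet_\pm$. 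Surjectivity is then obtained by inversion: for $(\mathbf{i},u):\mathbf{p}_+$ with $\mathbf{i} = (i,i')$, take the candidate preimage $(2, i', u-1/t)$ if $i = t+1$ and $(1, i', u-1)$ if $i \leq t$; each of the four cases of $\mathbf{p}_+$ certifies that the chosen $v$-coordinate has the parity required for membership in $\mathcal{I}_{\ell+}$, and the forward computation above then shows that $g$ sends it back to $(\mathbf{i},u)$.

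I expect no conceptual obstacle beyond the combinatorial bookkeeping of the eight combinations (two subcases of $g$ times four cases of \eqref{eq:GQparity}), carried out using that $t$ is odd throughout. The mildly delicate step is the ``wraparound'' between the subcases $p \leq t-1$ and $p \geq t$ of the formula \eqref{eq:g'}, which is precisely what demands the separate treatment of columns $2j+1$ and $2t-2j$ in the statement.
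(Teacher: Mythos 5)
Your proposal is correct and follows essentially the same route as the paper: both pass between (i) and (ii) via the shift underlying \eqref{eq:PP'} (the direction of the reduction is immaterial since the two formulas coincide), and both then verify the bijection by matching the formula for $g'$ against the sign pattern of the quiver and the cases of $\mathbf{p}_+$ in \eqref{eq:GQparity}. The only difference is one of explicitness: the paper carries out this check by inspecting the $t=5$ figures and asserts that the general case is similar, whereas you make the general-$t$ parity bookkeeping (the relation $2j\equiv tm+tu \pmod{2t}$ and the rule that $(i,i')$ carries $+$ iff $i+i'$ is odd) fully explicit, which is a welcome sharpening rather than a different argument.
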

\begin{proof}
The fact (i) is equivalent to (ii) due to
\eqref{eq:PP'}.
So, it is enough to prove  (ii).
Let us examine
the meaning of the  map \eqref{eq:g'} in
 the case $t=5$ with
Fig.~\ref{fig:labelxG1}.
Each encircled vertex therein corresponds to
$(\mathbf{i},u):\mathbf{p}_+$,
and some $(a,m,u)$ is attached to it by $g'$.
For example, in $Q(0)$,
$(1,m,0)$ ($m$: even) are attached to the
vertices with $(\circ,+)$ in the first quiver
(from the left),
and $(2,m,0)$ ($m$: odd) are attached to
the vertices with $(\bullet,+)$.
Similarly,
in $Q(1/5)$,
$(1,m,1/5)$ ($m$: odd) are attached to the
vertices with $(\circ,+)$ in the fourth quiver
(from the left),
and $(2,m,1/5)$ ($m$: even) are attached to
the vertices with $(\bullet,-)$.
Then, one can easily confirm that
$g'$ is indeed a bijection.
A general case is verified similarly.
\end{proof}

We introduce alternative labels
$x_{\mathbf{i}}(u)=x^{(a)}_m(u-d_a/t)$
($(a,m,u-d_a/t)\in \mathcal{I}_{\ell+}$)
for $(\mathbf{i},u)=g((a,m,u-d_a/t))$
and
$y_{\mathbf{i}}(u)=y^{(a)}_m(u)$
($(a,m,u)\in \mathcal{I}'_{\ell+}$)
for $(\mathbf{i},u)=g'((a,m,u))$,
respectively.

\begin{Remark} In the  case $t=1$,
i.e., the simply laced case,
the map $g$ in Lemma \ref{lem:gmap}
reads $(a,m,u)\mapsto ((a,m),u+1)$,
thus, differs from the  simpler one
$(a,m,u)\mapsto ((a,m),u)$ used in
Refs.~\refcite{Inoue09} and \refcite{Kuniba09}.
Either will serve as a natural parametrization
and the transferring from one to the other is easy.
\end{Remark}

\subsection{T-system and cluster algebra}

We show that the T-system $\mathbb{T}_{\ell}(M_t)$
naturally appears as a system of relations
among the cluster variables
$x_{\mathbf{i}}(u)$ 
in the trivial evaluation of coefficients.
(The quiver $Q_{\ell}(M_t)$ is designed to do so.)
Let $\mathcal{A}(B,x)$ be the cluster algebra
with trivial coefficients, where $(B,x)$ is
the initial seed.
Let $\mathbf{1}=\{1\}$ 
be the {\em trivial semifield}
and $\pi_{\mathbf{1}}:
\mathbb{Q}_{\mathrm{sf}}(y)\rightarrow 
\mathbf{1}$, $y_{\mathbf{i}}\mapsto 1$ be the projection.
Let $[x_{\mathbf{i}}(u)]_{\mathbf{1}}$
denote the image of $x_{\mathbf{i}}(u)$
 by the algebra homomorphism
$\mathcal{A}(B,x,y)\rightarrow \mathcal{A}(B,x)$
 induced from $\pi_{\mathbf{1}}$.
It is called the {\em trivial evaluation}.


\begin{Lemma}
\label{lem:Gx2}
Let $G(b,k,v;a,m,u)$ be the one in \eqref{eq:Tu}.
The family $\{x^{(a)}_m(u)
\mid (a,m,u)\in \mathcal{I}_{\ell+}\}$
satisfies a system of relations
\begin{align}
\label{eq:xy1}
\begin{split}
x^{(a)}_{m}\left(u-\textstyle\frac{d_a}{t}\right)
x^{(a)}_{m}\left(u+\textstyle\frac{d_a}{t}\right)
&=
\frac{y^{(a)}_m(u)}{1+y^{(a)}_m(u)}
\prod_{(b,k,v)\in \mathcal{I}_{\ell+}}
x^{(b)}_{k}(v)^{G(b,k,v;\,a,m,u)}\\
& \quad +
\frac{1}{1+y^{(a)}_m(u)}
x^{(a)}_{m-1}(u)x^{(a)}_{m+1}(u),
\end{split}
\end{align}
where $(a,m,u)\in \mathcal{I}'_{\ell+}$.
In particular,
the family $\{ [x^{(a)}_m(u)]_{\mathbf{1}}
\mid (a,m,u)\in \mathcal{I}_{\ell+}\}$
satisfies the T-system $\mathbb{T}_{\ell}(M_t)$
in $\mathcal{A}(B,x)$
by replacing $T^{(a)}_m(u)$ with $[x^{(a)}_m(u)]_{\mathbf{1}}$.
\end{Lemma}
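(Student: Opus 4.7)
The plan is to prove \eqref{eq:xy1} by recognizing it as the cluster exchange relation \eqref{eq:clust} at an individual mutation within the composite mutation sequence \eqref{eq:Gmutseq}. Fix $(a,m,u)\in \mathcal{I}'_{\ell+}$ and set $\mathbf{i}:=g'((a,m,u))$. By Lemma \ref{lem:gmap}, $(\mathbf{i},u)$ is a forward mutation point $\mathbf{p}_+$, so during the transition $u\to u+\frac{1}{t}$ of \eqref{eq:Gmutseq} we perform a mutation $\mu_{\mathbf{i}}$ as part of a composite $\mu^\bullet_\epsilon\mu^\circ_{\epsilon,k}$. Specializing \eqref{eq:clust} to the mutation at $\mathbf{i}$ in the seed with skew-symmetric matrix $B(u)$ yields
\[
x'_{\mathbf{i}}\,x''_{\mathbf{i}} = \frac{y_{\mathbf{i}}(u)}{1\oplus y_{\mathbf{i}}(u)}\prod_{j:\,B_{j\mathbf{i}}(u)>0}x_j^{B_{j\mathbf{i}}(u)} + \frac{1}{1\oplus y_{\mathbf{i}}(u)}\prod_{j:\,B_{j\mathbf{i}}(u)<0}x_j^{-B_{j\mathbf{i}}(u)},
\]
with $x'_{\mathbf{i}}=x_{\mathbf{i}}(u)=x^{(a)}_m(u-\frac{d_a}{t})$ and $y_{\mathbf{i}}(u)=y^{(a)}_m(u)$ by the labeling introduced after Lemma \ref{lem:gmap}.

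Next I identify $x''_{\mathbf{i}}$ with $x^{(a)}_m(u+\frac{d_a}{t})$. Since $x_{\mathbf{i}}$ is unchanged by mutations at other vertices, $x''_{\mathbf{i}}=x_{\mathbf{i}}(u')$, where $u'$ is the next time at which $(\mathbf{i},u'):\mathbf{p}_+$. A direct inspection of \eqref{eq:GQparity} shows $u'=u+\frac{2d_a}{t}$: for $\mathbf{i}\in\mathbf{I}^\bullet_\pm$ (so $a=2$, $d_a=1$) the sign of $\mathbf{i}$ against the parity of $p$ in \eqref{eq:GQparity} forces consecutive forward visits to be separated by $\frac{2}{t}$, while for $\mathbf{i}\in\mathbf{I}^\circ_{\pm,k}$ (so $a=1$, $d_a=t$) the vertex $\mathbf{i}$ belongs to exactly one $\mathbf{I}^\circ_{\epsilon,k}$, and this component of the composite mutation appears only once per period $2$ of \eqref{eq:Gmutseq}. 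An analogous accounting locates the cluster variables at the column-neighbors of $\mathbf{i}$ at time $u$: because $(a,m\pm1,u)\in\mathcal{I}_{\ell+}$ and the next forward mutation of those neighbors occurs at $u+\frac{d_a}{t}$, their current values at time $u$ equal $x^{(a)}_{m\pm1}(u)$ under the labeling.

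It remains to match the two monomials with those on the right of \eqref{eq:xy1}. A case analysis on $a\in\{1,2\}$ and on the parity of $p$ in $u\equiv \frac{p}{t}$, using \eqref{eq:Qpt} to track which permutation (and possible reversal) of $Q_\ell(M_t)$ gives $Q(u)$, will show that at every $\mathbf{p}_+$ vertex $\mathbf{i}$ the arrows $\mathbf{i}\to j$ of $Q(u)$ (i.e.\ $B_{j\mathbf{i}}(u)<0$) are exactly those to the two column-neighbors $(i,i'\pm1)$ of $\mathbf{i}$, producing the factor $x^{(a)}_{m-1}(u)\,x^{(a)}_{m+1}(u)$ with unit boundary condition $x^{(a)}_0(u)=x^{(a)}_{t_a\ell}(u)=1$ inherited from the absence of those vertices in $Q_\ell(M_t)$, whereas the arrows $j\to\mathbf{i}$ (i.e.\ $B_{j\mathbf{i}}(u)>0$) emanate from the cross-column neighbors with multiplicities and spectral-parameter positions encoding precisely the exponents $G(b,k,v;a,m,u)$ of \eqref{eq:Tu}; in particular, for $a=2$ this unfolds into the Hernandez product $S^{(b)}_m(u)$ of \eqref{eq:T2}, and for $a=1$ it reduces to the single factor of \eqref{eq:T1}. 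This combinatorial verification — transparent from Fig.~\ref{fig:quiverG} and from the mutation pictures in Figs.~\ref{fig:labelxG1}--\ref{fig:labelxG2} for $t=5$, but requiring a uniform argument for general odd $t$ — is the main obstacle. Once it is completed, \eqref{eq:xy1} holds. The second assertion is then immediate: the trivial-evaluation morphism $\pi_{\mathbf{1}}$ sends both $y^{(a)}_m(u)$ and $1\oplus y^{(a)}_m(u)$ to the unique element of $\mathbf{1}$, so the two rational prefactors in \eqref{eq:xy1} collapse to $1$ and the relation degenerates to the T-system \eqref{eq:Tu}.
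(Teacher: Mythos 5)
Your proposal follows essentially the same route as the paper: both identify \eqref{eq:xy1} with the exchange relation \eqref{eq:clust} at the forward mutation points singled out by $g$ and $g'$, and both reduce the matter to checking that the incoming and outgoing arrows at each such vertex of $Q(u)$ reproduce the two monomials of \eqref{eq:Tu}. The ``main obstacle'' you flag --- the uniform arrow count for general odd $t$ --- is precisely the step the paper also settles only by inspection of Figs.~\ref{fig:labelxG1}--\ref{fig:labelxG2} and two worked examples at $t=5$, so your write-up is, if anything, more explicit about the bookkeeping (the spacing $u'=u+\frac{2d_a}{t}$ between successive mutations at a vertex and the location of the column-neighbors) than the paper's own proof.
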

\begin{proof}
This follows from the exchange relation of
cluster variables \eqref{eq:clust}
and the property of the sequence \eqref{eq:GB2}
which are observed in Figs. \ref{fig:labelxG1} and \ref{fig:labelxG2}.

Let us demonstrate how to obtain these relations
in the case $t=5$ using Figs.~\ref{fig:labelxG1} and \ref{fig:labelxG2}.
For example, consider the mutation at
$((1,2),0)$.
Then, the attached variable
$x^{(1)}_2(-1)$ is mutated to
\begin{align}
\frac{1}{x^{(1)}_2(-1)}
\left\{
\frac{y^{(1)}_2(0)}{1+y^{(1)}_2(0)}
x^{(2)}_{10}(0)
+
\frac{1}{1+y^{(1)}_2(0)}
x^{(1)}_1(0)x^{(1)}_3(0)
\right\},
\end{align}
which should equal to $x^{(1)}_2(1)$. 
Also, consider the mutation at, say,
$((2,9),0)$.
Then, the attached variable
$x^{(2)}_9(-1/5)$ is mutated to
\begin{align}
\frac{1}{x^{(2)}_9(-\frac{1}{5})}
\Biggl\{
&
\frac{y^{(2)}_9(0)}{1+y^{(2)}_9(0)}
x^{(1)}_{1}(0)
\textstyle
x^{(1)}_{2}(-\frac{3}{5})
x^{(1)}_{2}(-\frac{1}{5})
x^{(1)}_{2}(\frac{1}{5}
)x^{(1)}_{2}(\frac{3}{5})
\\
&
+
\frac{1}{1+y^{(2)}_9(0)}
x^{(2)}_{8}(0)x^{(2)}_{10}(0)
\Biggr\},
\end{align}
which should equal to $x^{(2)}_9(1/5)$. 
They certainly agree with 
\eqref{eq:T1} and \eqref{eq:T2}.
(The quiver $Q_{\ell}(M_t)$ is designed to do so.)
\end{proof}

\begin{Definition}
\label{def:T-sub}
The {\em T-subalgebra
$\mathcal{A}_T(B,x)$
of ${\mathcal{A}}(B,x)$
associated with the sequence \eqref{eq:Gmutseq}}
is the subring of
${\mathcal{A}}(B,x)$
generated by
$[x_{\mathbf{i}}(u)]_{\mathbf{1}}$
($(\mathbf{i},u)\in \mathbf{I}\times \frac{1}{t}\mathbb{Z}$).
\end{Definition}

By Lemma \ref{lem:Gx2}, we have the following embedding.
\begin{Theorem}
\label{thm:GTiso}
The ring $\EuScript{T}^{\circ}_{\ell}(M_t)_+$ is isomorphic to
$\mathcal{A}_T(B,x)$ by the correspondence
$T^{(a)}_m(u)\mapsto [x^{(a)}_m(u)]_{\mathbf{1}}$.
\end{Theorem}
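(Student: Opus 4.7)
The plan is to realize $\varphi$ as the homomorphism induced on generators via Lemma \ref{lem:Gx2}, establish surjectivity using Lemma \ref{lem:gmap}, and derive injectivity from the algebraic independence of an initial cluster of $\mathcal{A}(B,x)$.

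By Lemma \ref{lem:Gx2}, the images $[x^{(a)}_m(u)]_{\mathbf{1}}$ for $(a,m,u)\in \mathcal{I}_{\ell+}$ satisfy the T-system relations $\mathbb{T}_{\ell}(M_t)$ that define $\EuScript{T}^{\circ}_{\ell}(M_t)_+$. Hence the assignment $T^{(a)}_m(u)\mapsto [x^{(a)}_m(u)]_{\mathbf{1}}$ extends to a well-defined ring homomorphism $\varphi\colon \EuScript{T}^{\circ}_{\ell}(M_t)_+ \to \mathcal{A}_T(B,x)$. The bijection $g$ of Lemma \ref{lem:gmap} identifies the T-variables of $\EuScript{T}^{\circ}_{\ell}(M_t)_+$ with precisely the cluster variables $x_{\mathbf{i}}(u)$ labeled by forward mutation points $(\mathbf{i},u):\mathbf{p}_+$ in the sequence \eqref{eq:Gmutseq}; these account for all distinct cluster variables appearing in the sequence, so the image of $\varphi$ contains every generator $[x_{\mathbf{i}}(u)]_{\mathbf{1}}$ of $\mathcal{A}_T(B,x)$, yielding surjectivity.

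For injectivity I would isolate a finite ``initial layer'' $S\subset \mathcal{I}_{\ell+}$ whose image under $g$ matches the initial cluster $x(0)=(x_{\mathbf{i}})_{\mathbf{i}\in \mathbf{I}}$ (or a cluster at any fixed time in the sequence), and such that every $T^{(a)}_m(u)$ with $(a,m,u)\in \mathcal{I}_{\ell+}$ is expressible as a rational function in $\{T^{(b)}_k(v):(b,k,v)\in S\}$ via the T-system \eqref{eq:T1}--\eqref{eq:T2} applied as a two-sided recursion in $u$. Granting this, there is a surjective ring homomorphism $\pi\colon \mathbb{Z}[T_s:s\in S]\to \EuScript{T}^{\circ}_{\ell}(M_t)_+$ from the polynomial ring on $S$. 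The composition $\varphi\circ\pi$ sends the free generators to the initial cluster variables of $\mathcal{A}(B,x)$, which are algebraically independent by general cluster-algebra theory and remain so after trivial evaluation. Hence $\varphi\circ\pi$ is injective, which by a standard diagram chase forces both $\pi$ and $\varphi$ to be isomorphisms.

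The main obstacle is the T-system rewriting step: one must verify that each composite mutation in the sequence \eqref{eq:Gmutseq} corresponds precisely to one application of \eqref{eq:T1} or \eqref{eq:T2}, with the parity conditions $\mathbf{P}_\pm$ and $\mathbf{p}_\pm$ and the indexing matched consistently via $g$, so that a single initial layer determines all of $\EuScript{T}^{\circ}_{\ell}(M_t)_+$. The parity decomposition \eqref{eq:Tfact}, the explicit structure of the quiver $Q_{\ell}(M_t)$ in Fig.~\ref{fig:quiverG}, and the worked $t=5$ example in Figs.~\ref{fig:labelxG1}--\ref{fig:labelxG2} supply the framework for this bookkeeping, making the argument a natural but intricate generalization of the simply-laced case treated in Ref.~\refcite{Kuniba09} and the nonsimply-laced finite-type case treated in Refs.~\refcite{Inoue10a} and \refcite{Inoue10b}.
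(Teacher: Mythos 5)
Your first two steps are sound and agree with the paper: Lemma \ref{lem:Gx2} makes the assignment $T^{(a)}_m(u)\mapsto [x^{(a)}_m(u)]_{\mathbf 1}$ a well-defined ring homomorphism, and the bijection $g$ of Lemma \ref{lem:gmap} shows that its image contains every generator of $\mathcal{A}_T(B,x)$. The gap is in the injectivity argument. You ask for a finite layer $S$ such that every $T^{(a)}_m(u)$ is ``expressible as a rational function'' of $\{T_s : s\in S\}$, and then claim a \emph{surjective} ring homomorphism $\pi\colon\mathbb{Z}[T_s : s\in S]\to\EuScript{T}^{\circ}_{\ell}(M_t)_+$ from the \emph{polynomial} ring on $S$. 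These two statements are incompatible: the T-system recursion produces $T^{(a)}_m(u+\frac{d_a}{t})$ only after dividing by $T^{(a)}_m(u-\frac{d_a}{t})$, so the non-initial generators are not polynomials in the initial layer and $\pi$ is not surjective. Passing to the Laurent polynomial ring $\mathbb{Z}[T_s^{\pm1}]$ does not repair this: surjectivity onto $\EuScript{T}^{\circ}_{\ell}(M_t)_+$ would then require every $T^{(a)}_m(u)$ to be a Laurent polynomial in the initial layer \emph{inside} $\EuScript{T}_{\ell}(M_t)$, i.e.\ a Laurent phenomenon for the T-system, which is a consequence of the theorem rather than an available input; the recursion by itself only yields rational expressions with accumulating denominators. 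Moreover, were $\pi$ an isomorphism as your diagram chase concludes, $\EuScript{T}^{\circ}_{\ell}(M_t)_+$ would be a free polynomial ring on $|S|$ variables, contradicting the theorem itself, since $\mathcal{A}_T(B,x)$ contains non-initial cluster variables with genuine denominators.

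The paper avoids all of this by building the inverse in the opposite direction, where the needed freeness and Laurentness actually hold. It defines $\tilde{\varphi}\colon\mathbb{Z}[x_{\mathbf i}^{\pm1}]_{\mathbf i\in\mathbf I}\to\EuScript{T}_{\ell}(M_t)$ (the target being the ring \emph{with} inverted generators) by $x_{\mathbf i}^{\pm1}\mapsto T^{(a)}_m(u_{\mathbf i}-d_a/t)^{\pm1}$, where $u_{\mathbf i}$ is the smallest nonnegative forward mutation time at $\mathbf i$ and $(a,m,u_{\mathbf i}-d_a/t)$ is determined by Lemma \ref{lem:gmap}\,(i); this is automatically well defined because the Laurent polynomial ring is free. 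Since $\mathcal{A}_T(B,x)\subset\mathbb{Z}[x_{\mathbf i}^{\pm1}]$ by the Fomin--Zelevinsky Laurent phenomenon, one then checks by induction on forward and backward mutations --- exactly the bookkeeping you describe, matching each exchange relation under trivial evaluation with one instance of \eqref{eq:T1} or \eqref{eq:T2} via $g$ --- that $\tilde{\varphi}([x^{(a)}_m(u)]_{\mathbf 1})=T^{(a)}_m(u)$ for all $(a,m,u)\in\mathcal{I}_{\ell+}$, and restricts $\tilde{\varphi}$ to $\mathcal{A}_T(B,x)$ to obtain a two-sided inverse of your map. If you want to keep your outline, this is the modification you need: present the free object on the cluster-algebra side (the Laurent polynomial ring on the initial cluster), not on the T-system side.
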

\begin{proof}
The map $\rho :T^{(a)}_m(u)\mapsto [x^{(a)}_m(u)]_{\mathbf{1}}$
is a ring homomorphism due to Lemma \ref{lem:Gx2}.
We can construct the inverse of $\rho$ as follows.
For each $\mathbf{i}\in 
\mathbf{I}$, let $u_{\mathbf{i}}
\in \frac{1}{t}\mathbb{Z}$ be the smallest
nonnegative $u_{\mathbf{i}}$ such that $(\mathbf{i},u):\mathbf{p}_+$.
Then, thanks to Lemma \ref{lem:gmap} (i)
there is a unique $(a,m,u_{\mathbf{i}}-d_a/t)\in \mathcal{I}_{\ell+}$
such that $g((a,m,u_{\mathbf{i}}-d_a/t))
=(\mathbf{i},u_{\mathbf{i}})$.
We define a ring homomorphism
$\tilde{\varphi}:\mathbb{Z}[x_{\mathbf{i}}^{\pm1}]_{\mathbf{i}\in
\mathbf{I}}
\rightarrow \EuScript{T}_{\ell}(M_t)$
by
$x_{\mathbf{i}}^{\pm1}\mapsto T^{(a)}_m(u_{\mathbf{i}}-d_a/t)^{\pm1}$.
Thus, we have $\tilde{\varphi}:
[x^{(a)}_m(u_{\mathbf{i}}-d_a/t)]_{\mathbf{1}}
\mapsto  T^{(a)}_m(u_{\mathbf{i}}-d_a/t)$.
Furthermore, one can prove that 
$\tilde{\varphi}:
[x^{(a)}_m(u)]_{\mathbf{1}}
\mapsto  T^{(a)}_m(u)$
for any $(a,m,u)\in \mathcal{I}_{\ell+}$
by induction on the forward and backward mutations,
applying the same T-systems for the both sides.
By the the restriction of $\tilde{\varphi}$
to $\mathcal{A}_T(B,x)$,
we obtain a ring homomorphism
$\varphi:
\mathcal{A}_T(B,x)\rightarrow 
\EuScript{T}^{\circ}_{\ell}(M_t)_+$,
which is the inverse of $\rho$.
\end{proof}

\subsection{Y-system and cluster algebra}

The Y-system $\mathbb{Y}_{\ell}(M_t)$
also naturally appears as a system of relations
among the coefficients
$y_{\mathbf{i}}(u)$.

The following lemma follows from the exchange relation of
coefficients and the property of the sequence
\eqref{eq:GB2}.

\begin{Lemma}
\label{lem:Gy2}
The family $\{ y^{(a)}_m(u)
\mid (a,m,u)\in \mathcal{I}'_{\ell+}\}$
satisfies the Y-system $\mathbb{Y}_{\ell}(M_t)$
by replacing $Y^{(a)}_m(u)$ with $y^{(a)}_m(u)$.
\end{Lemma}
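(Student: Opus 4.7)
The plan is to mirror the strategy of Lemma~\ref{lem:Gx2}: read off the Y-system as the consequence of the coefficient exchange rule~\eqref{eq:coef} applied along the mutation sequence~\eqref{eq:Gmutseq}, with the arrow data supplied by the quivers $Q(u)$ of Lemma~\ref{lem:GQmut}.

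First, I would fix a prospective ``center'' $(a,m,u)$ of the Y-system so that $(a,m,u\pm d_a/t)\in \mathcal{I}'_{\ell+}$; by \eqref{eq:PP'} this amounts to $(a,m,u):\mathbf{P}_+$. Under the bijection $g'$ of Lemma~\ref{lem:gmap}(ii), both $(a,m,u-d_a/t)$ and $(a,m,u+d_a/t)$ are sent to a common vertex $\mathbf{i}\in\mathbf{I}$ at their respective steps, because the vertex selected by $g'$ depends only on the residue of $m+tv\pmod 2$ and this residue is preserved when $v$ is shifted by $\pm d_a/t$: for $a=2$ trivially, since $g'$ sends every $(2,m,\cdot)$ to the fixed vertex $(t+1,m)$, and for $a=1$ because the two shifted values differ by $2$. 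Hence the LHS of \eqref{eq:Y1}/\eqref{eq:Y2} is literally $y_\mathbf{i}(u-d_a/t)\,y_\mathbf{i}(u+d_a/t)$, and what remains is to compute this product by tracking $y_\mathbf{i}$ along the $2d_a$ elementary composite mutations between those steps.

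Next I would use Lemma~\ref{lem:gmap}(i): the condition $(a,m,u):\mathbf{P}_+$ says exactly that $(\mathbf{i},u)$ is a forward mutation point, so $\mathbf{i}$ is mutated once in this interval, at step $u$, and \eqref{eq:coef} with $i=k$ gives $y_\mathbf{i}(u+\tfrac{1}{t})=y_\mathbf{i}(u)^{-1}$. At every other elementary step in $(u-d_a/t,u+d_a/t)$ the vertex $\mathbf{i}$ is inert, and its coefficient picks up factors $y_\mathbf{j}/(1\oplus y_\mathbf{j})$ or $(1\oplus y_\mathbf{j})^{-1}$ -- according to the sign of $B'_{\mathbf{j}\mathbf{i}}$ read off from $Q(v)=\tilde w_p(Q(0))$ or its opposite -- for each simultaneously mutated neighbour $\mathbf{j}$. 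Relabelling each such $y_\mathbf{j}$ as $y^{(b)}_k(v)$ via $g'$ and multiplying out, the contributions from the two $\circ$-neighbours of $\mathbf{i}$ in its own column collapse into the denominator $(1+y^{(a)}_{m-1}(u)^{-1})(1+y^{(a)}_{m+1}(u)^{-1})$, while those from the opposite column (with weights given by the arrow multiplicities in $Q_\ell(M_t)$) assemble into the numerator $\prod_{b\sim a}Z^{(b)}_{d_a/d_b,m}(u)$ for $d_a>1$ or $\prod_{b\sim a}(1+y^{(b)}_{m/d_b}(u))$ for $d_a=1$. This is precisely \eqref{eq:Y1} and \eqref{eq:Y2}.

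The main obstacle is the case $a=1$, where $u\mapsto u\pm1$ spans a full period of~\eqref{eq:GB2}, so $\mathbf{i}$, sitting in the left column of some $Q_k$, interacts with a staggered pattern of $\bullet$-vertices across all $t$ subquivers, with arrow orientations flipping under $\mathrm{op}$ at odd steps. Verifying that after all $2t$ elementary mutations the accumulated factors $(1+y^{(2)}_{tm+j}(v))^{\pm1}$ acquire exactly the multiplicities and time-shifts $v=u+\tfrac{1}{t}(t-|j|+1-2k)$ of $Z^{(2)}_{t,m}(u)$ is the combinatorial heart of the lemma; it is carried out by naturally extending the explicit $t=5$ bookkeeping visible in Figs.~\ref{fig:labelxG1}--\ref{fig:labelxG2}, in direct analogy with the verifications in Refs.~\refcite{Kuniba09},~\refcite{Inoue10a},~\refcite{Inoue10b}.
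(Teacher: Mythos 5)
Your overall strategy---identify the two left-hand factors $y^{(a)}_m(u\pm d_a/t)$ as the values of a single coefficient $y_{\mathbf{i}}$ at two times, then track $y_{\mathbf{i}}$ through the $2d_a$ elementary steps of \eqref{eq:Gmutseq} using the exchange rule \eqref{eq:coef}---is exactly the paper's (which itself only carries out the resulting bookkeeping on the $t=5$ example). The identification $\mathbf{i}_+=\mathbf{i}_-$ via $g'$ and the count of $2d_a$ steps are correct. However, there is a concrete error in where you place the inversion, and it is not cosmetic.

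You claim that $(a,m,u):\mathbf{P}_+$ makes $(\mathbf{i},u)$ a forward mutation point, so that $\mathbf{i}$ is mutated once in the window, at the \emph{interior} step $u$. This is false: under $g$ the element $(a,m,u)\in\mathcal{I}_{\ell+}$ is sent to $(\mathbf{i},u+d_a/t)$, not to $(\mathbf{i},u)$, while $g'((a,m,u-d_a/t))=(\mathbf{i},u-d_a/t)$. Hence $\mathbf{i}$ is a forward mutation point at the two \emph{endpoints} $u\pm d_a/t$ and at no interior step; in particular $(\mathbf{i},u)$ is not a mutation point at all, so \eqref{eq:coef} with $i=k$ cannot be invoked there. (Compare the paper's example: for the relation centered at $(2,9,\tfrac{1}{5})$ the vertex $(6,9)$ is inverted at $u=0=\tfrac{1}{5}-\tfrac{d_2}{t}$, not at $u=\tfrac15$.) The correct accounting is: the first transition, out of $u-d_a/t$, inverts $y_{\mathbf{i}}$ and contributes nothing else (the simultaneously mutated vertices are not adjacent to $\mathbf{i}$); the remaining $2d_a-1$ transitions leave $\mathbf{i}$ un-mutated and multiply in the factors; and $y_{\mathbf{i}}(u+d_a/t)$ is read off just before the next mutation of $\mathbf{i}$. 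Only with this ordering does $y_{\mathbf{i}}(u-d_a/t)\,y_{\mathbf{i}}(u+d_a/t)$ collapse to the bare product of accumulated factors. With your ordering, the factors $P_1$ accumulated before step $u$ would be inverted together with $y_{\mathbf{i}}$, and the product would come out as $P_2/P_1$ instead of $P_1P_2$, so the computation would not reproduce \eqref{eq:Y1}--\eqref{eq:Y2}. (A smaller slip: for $a=2$ the column-neighbours producing the denominator are $\bullet$-vertices, not $\circ$-vertices.) Once the inversion is moved to the left endpoint, the remaining verification of the multiplicities in $Z^{(b)}_{p,m}(u)$ is the same combinatorial check the paper performs.
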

\begin{proof}
Again, let us demonstrate how to obtain these relations
in the case $t=5$ using Figs.~\ref{fig:labelxG1} and \ref{fig:labelxG2}.
For example, consider the mutation at
$((1,2),0)$.
Then, the attached variable
$y^{(1)}_2(0)$ is mutated to
$y^{(1)}_2(0)^{-1}$.
Then, the following factors are multiplied
to $y^{(1)}_2(0)^{-1}$ during $u=\frac{1}{5},\dots,\frac{9}{5}$:
\begin{align*}
&
\textstyle(1+y^{(2)}_{14}(\frac{5}{5})),\\
&\textstyle
(1+y^{(2)}_{13}(\frac{4}{5}))
(1+y^{(2)}_{13}(\frac{6}{5})),\\
&\textstyle
(1+y^{(2)}_{12}(\frac{3}{5}))
(1+y^{(2)}_{12}(\frac{5}{5}))
(1+y^{(2)}_{12}(\frac{7}{5})),\\
&\textstyle
(1+y^{(2)}_{11}(\frac{2}{5}))
(1+y^{(2)}_{11}(\frac{4}{5}))
(1+y^{(2)}_{11}(\frac{6}{5}))
(1+y^{(2)}_{11}(\frac{8}{5})),\\
&\textstyle
(1+y^{(2)}_{10}(\frac{1}{5}))
(1+y^{(2)}_{10}(\frac{3}{5}))
(1+y^{(2)}_{10}(\frac{5}{5}))
(1+y^{(2)}_{10}(\frac{7}{5}))
(1+y^{(2)}_{10}(\frac{9}{5})),\\
&\textstyle
(1+y^{(2)}_{9}(\frac{2}{5}))
(1+y^{(2)}_{9}(\frac{4}{5}))
(1+y^{(2)}_{9}(\frac{6}{5}))
(1+y^{(2)}_{9}(\frac{8}{5})),\\
&\textstyle
(1+y^{(2)}_{8}(\frac{3}{5}))
(1+y^{(2)}_{8}(\frac{5}{5}))
(1+y^{(2)}_{8}(\frac{7}{5})),\\
&\textstyle
(1+y^{(2)}_{7}(\frac{4}{5}))
(1+y^{(2)}_{7}(\frac{6}{5})),\\
&\textstyle
(1+y^{(2)}_{8}(\frac{5}{5})),\\
&(1+y^{(1)}_{1}(1)^{-1})^{-1}(1+y^{(1)}_{3}(1)^{-1})^{-1}.
\end{align*}
The result should equal to $y^{(1)}_2(2)$. 
Also, consider the mutation at, say,
$((2,9),0)$.
Then, the attached variable
$y^{(2)}_9(0)$ is mutated to
$y^{(2)}_9(0)^{-1}$.
Then, the following factors are multiplied
to $y^{(2)}_9(0)^{-1}$ at $u=\frac{1}{5}$:
\begin{align*}
\textstyle
(1+y^{(2)}_{8}(\frac{1}{5})^{-1})^{-1}
(1+y^{(2)}_{10}(\frac{1}{5})^{-1})^{-1}.
\end{align*}
The result should equal to $y^{(2)}_9(\frac{2}{5})$. 
They certainly agree with 
\eqref{eq:Y1} and \eqref{eq:Y2}.

\end{proof}

\begin{Definition}
\label{def:Y-sub}
The {\em Y-subgroup
$\mathcal{G}_Y(B,y)$
of ${\mathcal{G}}(B,y)$
associated with the sequence \eqref{eq:Gmutseq}}
is the subgroup of
${\mathcal{G}}(B,y)$ 
generated by
$y_{\mathbf{i}}(u)$
($(\mathbf{i},u)\in \mathbf{I}\times \frac{1}{t}\mathbb{Z}$)
and $1+y_{\mathbf{i}}(u)$
($(\mathbf{i},u):\mathbf{p}_+$ or $\mathbf{p}_-$).
\end{Definition}

By Lemma \ref{lem:Gy2}, we have the following embedding.
\begin{Theorem}
\label{thm:GYiso}
The group $\EuScript{Y}^{\circ}_{\ell}(M_t)_+$ is isomorphic to
$\mathcal{G}_Y(B,y)$ by the correspondence
$Y^{(a)}_m(u)\mapsto y^{(a)}_m(u)$
and $1+Y^{(a)}_m(u)\mapsto 1+y^{(a)}_m(u)$.
\end{Theorem}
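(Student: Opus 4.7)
The plan is to follow the same strategy as in the proof of Theorem \ref{thm:GTiso}, replacing the T-system with the Y-system and the exchange relation of clusters with the exchange relation of coefficients. Define the map $\rho: \EuScript{Y}^{\circ}_{\ell}(M_t)_+ \to \mathcal{G}_Y(B,y)$ by $Y^{(a)}_m(u) \mapsto y^{(a)}_m(u)$ and $1+Y^{(a)}_m(u)\mapsto 1+y^{(a)}_m(u)$. By Lemma \ref{lem:Gy2} the family $\{y^{(a)}_m(u)\}$ satisfies the relations $\mathbb{Y}_\ell(M_t)$, hence $\rho$ is a well-defined group homomorphism from the abstract group defined by generators and relations into $\mathcal{G}_Y(B,y)$.

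Next I would construct the inverse $\varphi$. For each $\mathbf{i}\in \mathbf{I}$ choose $u_{\mathbf{i}}\in \frac{1}{t}\mathbb{Z}_{\geq 0}$ to be the smallest nonnegative value of $u$ such that $(\mathbf{i},u):\mathbf{p}_+$. By Lemma \ref{lem:gmap}(ii) there is a unique $(a,m,u_{\mathbf{i}})\in \mathcal{I}'_{\ell+}$ with $g'((a,m,u_{\mathbf{i}}))=(\mathbf{i},u_{\mathbf{i}})$. Using the universal property of the universal semifield $\mathbb{Q}_{\mathrm{sf}}(y)$, one extends the assignment $y_{\mathbf{i}}\mapsto Y^{(a)}_m(u_{\mathbf{i}})$ to a semifield homomorphism $\tilde\varphi: \mathbb{Q}_{\mathrm{sf}}(y)\to \EuScript{Y}_\ell(M_t)$; under this homomorphism the coefficient $y_{\mathbf{i}}$ of the initial seed is sent to $Y^{(a)}_m(u_{\mathbf{i}})$, i.e.\ to its image under the labelling attached to $(\mathbf{i},u_{\mathbf{i}})$.

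The key step is to show that for \emph{every} $(a,m,u)\in \mathcal{I}'_{\ell+}$ one has $\tilde\varphi(y^{(a)}_m(u))=Y^{(a)}_m(u)$, and consequently $\tilde\varphi(1+y^{(a)}_m(u))=1+Y^{(a)}_m(u)$. This is done by induction on the forward and backward steps of the mutation sequence \eqref{eq:Gmutseq}, starting from the vertices with $u=u_{\mathbf{i}}$. At each inductive step one uses the exchange relation \eqref{eq:coef} to express the newly produced $y^{(a)}_m(u\pm 2d_a/t)$ as a rational function of previously labelled coefficients; on the Y-side the same relation is furnished by \eqref{eq:Y1}--\eqref{eq:Y2} (equivalently, by Lemma \ref{lem:Gy2}). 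By the induction hypothesis these two sides match, so the induction closes. Restricting $\tilde\varphi$ to $\mathcal{G}_Y(B,y)$ then produces a group homomorphism $\varphi: \mathcal{G}_Y(B,y)\to \EuScript{Y}^{\circ}_\ell(M_t)_+$ which is clearly inverse to $\rho$ on the generators, hence on all of $\mathcal{G}_Y(B,y)$.

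The main obstacle is the verification that the inductive step is genuinely symmetric between the two sides: one needs to check that the monomial of $(1+y)$-factors produced by the composite mutation $\mu^\bullet_\varepsilon \mu^\circ_{\varepsilon,k}$ in \eqref{eq:Gmutseq} exactly reproduces the numerator of \eqref{eq:Y1}--\eqref{eq:Y2}, including the somewhat intricate $Z^{(b)}_{p,m}(u)$-factors that appear for $d_a>1$. This is the content of Lemma \ref{lem:Gy2}, illustrated in the $t=5$ calculation in the lemma's proof, and the bookkeeping there (together with the parity bijection of Lemma \ref{lem:gmap}) is precisely what makes the induction go through uniformly in~$t$.
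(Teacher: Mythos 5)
Your overall architecture (well-definedness of $\rho$ via Lemma \ref{lem:Gy2}, then an inverse $\tilde\varphi$ defined on the initial coefficients and propagated by induction along the mutation sequence \eqref{eq:Gmutseq}) matches the paper's, but there is a genuine gap in your construction of $\tilde\varphi$: you have transplanted the recipe from Theorem \ref{thm:GTiso} verbatim, and it does not work for coefficients. For cluster variables one has $x_{\mathbf{i}}(u)=x_{\mathbf{i}}$ for all $u$ up to the first mutation at $\mathbf{i}$, so setting $x_{\mathbf{i}}\mapsto T^{(a)}_m(u_{\mathbf{i}}-d_a/t)$ with $u_{\mathbf{i}}$ the smallest nonnegative forward mutation time correctly seeds the induction. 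Coefficients behave differently: by the exchange relation \eqref{eq:coef}, the coefficient at $\mathbf{i}$ is multiplied by a factor $\bigl(y'_k/(1\oplus y'_k)\bigr)^{B'_{ki}}$ or $(1\oplus y'_k)^{-B'_{ki}}$ at \emph{every} mutation at a vertex adjacent to $\mathbf{i}$, so $y_{\mathbf{i}}(u_{\mathbf{i}})\neq y_{\mathbf{i}}(0)=y_{\mathbf{i}}$ whenever $u_{\mathbf{i}}\neq 0$ and a neighbour of $\mathbf{i}$ is mutated in between (which happens for all vertices not mutated at $u=0$). Consequently your assignment $y_{\mathbf{i}}\mapsto Y^{(a)}_m(u_{\mathbf{i}})$ does not yield the base case $\tilde\varphi(y^{(a)}_m(u_{\mathbf{i}}))=Y^{(a)}_m(u_{\mathbf{i}})$ of your induction, because $y^{(a)}_m(u_{\mathbf{i}})=y_{\mathbf{i}}(u_{\mathbf{i}})$ is a nontrivial subtraction-free expression in the initial $y_{\mathbf{j}}$'s, not the single free generator $y_{\mathbf{i}}$.

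The paper repairs exactly this point: it takes $u_{\mathbf{i}}$ to be the \emph{largest nonpositive} time with $(\mathbf{i},u_{\mathbf{i}}):\mathbf{p}_+$, and when $u_{\mathbf{i}}<0$ it defines $\tilde\varphi(y_{\mathbf{i}})$ not as $Y^{(a)}_m(u_{\mathbf{i}})$ but as $Y^{(a)}_m(u_{\mathbf{i}})^{-1}$ times an explicit ratio of products of $(1+Y^{(b)}_k(v))$ and $(1+Y^{(b)}_k(v)^{-1})$ over the mutation points $(b,k,v)$ with $u_{\mathbf{i}}<v<0$, sorted by the sign of $B_{\mathbf{j}\mathbf{i}}(v)$; that is, it transports $Y^{(a)}_m(u_{\mathbf{i}})$ from time $u_{\mathbf{i}}$ to time $0$ through the coefficient exchange relations, the inversion at the mutation point $u_{\mathbf{i}}$ accounting for the leading $Y^{-1}$. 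With that corrected seed the induction you describe does close, and the remainder of your argument (restriction to $\mathcal{G}_Y(B,y)$, inverse to $\rho$ on generators) is as in the paper. Note also that the generators $1+y_{\mathbf{i}}(u)$ of $\mathcal{G}_Y(B,y)$ are only taken for $(\mathbf{i},u):\mathbf{p}_+$ or $\mathbf{p}_-$, which is what Lemma \ref{lem:gmap}(ii) matches with the generators $1+Y^{(a)}_m(u)$ of $\EuScript{Y}^{\circ}_{\ell}(M_t)_+$.
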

\begin{proof}
The map $\rho :Y^{(a)}_m(u)\mapsto y^{(a)}_m(u)$,
$1+Y^{(a)}_m(u)\mapsto 1+ y^{(a)}_m(u)$
is a group homomorphism due to Lemma \ref{lem:Gx2}.
We can construct the inverse of $\rho$ as follows.
For each $\mathbf{i}\in 
\mathbf{I}$, let $u_{\mathbf{i}}
\in \frac{1}{t}\mathbb{Z}$ be the largest
nonpositive $u_{\mathbf{i}}$ such that $(\mathbf{i},u_{\mathbf{i}})
:\mathbf{p}_+$.
Then, thanks to Lemma \ref{lem:gmap} (ii)
there is a unique $(a,m,u_{\mathbf{i}})\in \mathcal{I}'_{\ell+}$
such that $g'((a,m,u_{\mathbf{i}}))=(\mathbf{i},u_{\mathbf{i}})$.
We define a semifield homomorphism
$\tilde{\varphi}:\mathbb{Q}_{\mathrm{sf}}(y_{\mathbf{i}})_{\mathbf{i}\in
\mathbf{I}}
\rightarrow \EuScript{Y}_{\ell}(M_t)$
as follows.
If $u_{\mathbf{i}}=0$,
then
$y_{\mathbf{i}}\mapsto Y^{(a)}_m(0)$.
If $u_{\mathbf{i}}<0$, we define
\begin{align}
\tilde{\varphi}(y_{\mathbf{i}})
=
Y^{(a)}_{m}(u_{\mathbf{i}})^{-1}
\frac{
\displaystyle
\prod_{(b,k,v)}(1+Y^{(b)}_k (v))
}
{
\displaystyle
\prod_{(b,k,v)}(1+Y^{(b)}_k (v)^{-1})
},
\end{align}
where the product in the numerator is taken
for $(b,k,v):\mathcal{I}'_{\ell+}$
such that $u_{\mathbf{i}}< v < 0$ and
$B_{\mathbf{j}\mathbf{i}}(v)<0$ for 
$(\mathbf{j},v)=g'((b,k,v))$,
and the product in the denominator is taken
for $(b,k,v):\mathcal{I}'_{\ell+}$
such that $u_{\mathbf{i}}< v < 0$ and
$B_{\mathbf{j}\mathbf{i}}(v)>0$ for 
$(\mathbf{j},v)=g'((b,k,v))$.
Then, we have
$\tilde{\varphi}: y^{(a)}_m(u_{\mathbf{i}})
\mapsto Y^{(a)}_m(u_{\mathbf{i}})$.
Furthermore, one can prove that
$\tilde{\varphi}: y^{(a)}_m(u)
\mapsto Y^{(a)}_m(u)$
for any $(a,m,u)\in \mathcal{I}'_{\ell+}$
by induction on the forward and backward mutations,
applying the same Y-systems for the both sides.
By the restriction of $\tilde{\varphi}$
to $\mathcal{G}_Y(B,x)$,
we obtain a group homomorphism
$\varphi:
\mathcal{G}_Y(B,x)\rightarrow 
\EuScript{Y}^{\circ}_{\ell}(M_t)_+$,
which is the inverse of $\rho$.
\end{proof}

\section{Cluster algebraic formulation: The case $|I|=2$; $t$ is even}
\label{sect:teven}

In this section we consider the case $|I|=2$ when $t$ is even.
Basically it is parallel to the former case
and we omit proofs.

\subsection{Parity decompositions of T and Y-systems}

For a triplet $(a,m,u)\in \mathcal{I}_{\ell}$,
we reset the `parity conditions' $\mathbf{P}_{+}$ and
$\mathbf{P}_{-}$ by
\begin{align}
\label{eq:BPcond}
\begin{split}
\mathbf{P}_{+}:& \ \mbox{
$tu$ is even
if
$a=1$; $m+tu$ is even if $a=2$},\\
\mathbf{P}_{-}:& \ \mbox{
$tu$ is odd if
$a=1$; $m+tu$ is odd if $a=2$}.
\end{split}
\end{align}
Let $\mathcal{I}_{\ell\varepsilon}$ be
the set of all $(a,m,u):\mathbf{P}_{\varepsilon}$.
Define $\EuScript{T}^{\circ}_{\ell}(M_t)_{\varepsilon}$
($\varepsilon=\pm$)
to be the subring of $\EuScript{T}^{\circ}_{\ell}(M_t)$
generated by
 $T^{(a)}_m(u)$
$((a,m,u)\in \mathcal{I}_{\ell\varepsilon})$.
Then, we have
$\EuScript{T}^{\circ}_{\ell}(M_t)_+
\simeq
\EuScript{T}^{\circ}_{\ell}(M_t)_-
$
by $T^{(a)}_m(u)\mapsto T^{(a)}_m(u+\frac{1}{t})$,
 and the decomposition \eqref{eq:Tfact} holds.

For a triplet $(a,m,u)\in \mathcal{I}_{\ell}$ ,
we set another `parity conditions' $\mathbf{P}'_{+}$ and
$\mathbf{P}'_{-}$ by
\begin{align}
\label{eq:BPcond2}
\begin{split}
\mathbf{P}'_{+}:&\ \mbox{
$tu$ is even if
$a=1$;
 $m+tu$ is odd if $a=2$},\\
\mathbf{P}'_{-}:&\ \mbox{
$tu$ is odd if
$a=1$;
 $m+tu$ is even if $a=2$}.
\end{split}
\end{align}
We have
\begin{align}
(a,m,u):\mathbf{P}'_+ \ \Longleftrightarrow\ 
\textstyle (a,m,u\pm \frac{d_a}{t}):\mathbf{P}_+.
\end{align}
Let $\mathcal{I}'_{\ell\varepsilon}$ be
 the set of all $(a,m,u):\mathbf{P}'_{\varepsilon}$.
Define $\EuScript{Y}^{\circ}_{\ell}(M_t)_{\varepsilon}$
($\varepsilon=\pm$)
to be the subgroup of $\EuScript{Y}^{\circ}_{\ell}(M_t)$
generated by
$Y^{(a)}_m(u)$, $1+Y^{(a)}_m(u)$
$((a,m,u)\in \mathcal{I}'_{\ell\varepsilon})$.
Then, we have
$\EuScript{Y}^{\circ}_{\ell}(M_t)_+
\simeq
\EuScript{Y}^{\circ}_{\ell}(M_t)_-
$
by $Y^{(a)}_m(u)\mapsto Y^{(a)}_m(u+\frac{1}{t})$,
$1+Y^{(a)}_m(u)\mapsto 1+Y^{(a)}_m(u+\frac{1}{t})$,
 and the decomposition \eqref{eq:Yfact} holds.

\subsection{Quiver $Q_{\ell}(M_t)$}
\label{subsect:quivereven}

With the Cartan matrix  $M_t$ and $\ell\geq 2$ we associate
 the quiver $Q_{\ell}(M_t)$.
Again, as a rather general example, the case $t=4$ is given by
 Fig. \ref{fig:quiverB},
where the right columns in  the four quivers $Q_1$,\dots,$Q_4$ are
identified.
Also we assign the empty or filled circle $\circ$/$\bullet$
and the sign +/$-$ to  each vertex as shown.
For a general even $t$,  the quiver $Q_{\ell}(M_t)$ is defined
by naturally extending the case $t=4$.
Even though it looks quite similar to the odd $t$ case
in Fig.\ \ref{fig:quiverG},
there is one important difference due to the parity of $t$;
that is, when $t$ is even, any vertex 
in the right column
of $Q_i$  has the sign `$-$' whenever it is connected
to a vertex in the left column by a  {\em horizontal} arrow.
This is not so when $t$ is odd.

%
\begin{figure}
\setlength{\unitlength}{0.75pt}
\begin{picture}(360,230)(-100,-20)
%
\put(0,0)
{
\put(0,45){\circle{5}}
\put(0,105){\circle{5}}
\put(0,165){\circle{5}}
\put(30,0){\circle*{5}}
\put(30,15){\circle*{5}}
\put(30,30){\circle*{5}}
\put(30,45){\circle*{5}}
\put(30,60){\circle*{5}}
\put(30,75){\circle*{5}}
\put(30,90){\circle*{5}}
\put(30,105){\circle*{5}}
\put(30,120){\circle*{5}}
\put(30,135){\circle*{5}}
\put(30,150){\circle*{5}}
\put(30,165){\circle*{5}}
\put(30,180){\circle*{5}}
\put(30,195){\circle*{5}}
\put(30,210){\circle*{5}}
%
\put(30,3){\vector(0,1){9}}
\put(30,27){\vector(0,-1){9}}
\put(30,33){\vector(0,1){9}}
\put(30,57){\vector(0,-1){9}}
\put(30,63){\vector(0,1){9}}
\put(30,87){\vector(0,-1){9}}
\put(30,93){\vector(0,1){9}}
\put(30,117){\vector(0,-1){9}}
\put(30,123){\vector(0,1){9}}
\put(30,147){\vector(0,-1){9}}
\put(30,153){\vector(0,1){9}}
\put(30,177){\vector(0,-1){9}}
\put(30,183){\vector(0,1){9}}
\put(30,207){\vector(0,-1){9}}
\put(0,102){\vector(0,-1){54}}
\put(0,108){\vector(0,1){54}}
%
\put(3,38){\vector(2,-3){24}}
\put(27,17){\vector(-1,1){23}}
\put(3,43){\vector(2,-1){24}}
\put(27,45){\vector(-1,0){23}}
\put(3,47){\vector(2,1){24}}
\put(27,73){\vector(-1,-1){23}}
\put(3,52){\vector(2,3){24}}
\put(27,105){\vector(-1,0){24}}
\put(3,158){\vector(2,-3){24}}
\put(27,137){\vector(-1,1){23}}
\put(3,163){\vector(2,-1){24}}
\put(27,165){\vector(-1,0){23}}
\put(3,167){\vector(2,1){24}}
\put(27,193){\vector(-1,-1){23}}
\put(3,172){\vector(2,3){24}}
\put(3,-2)
{
\put(-17,45){\small $-$}
\put(-17,105){\small $+$}
\put(-17,165){\small $-$}
}
\put(4,-2)
{
\put(30,0){\small $+$}
\put(30,15){\small $-$}
\put(30,30){\small $+$}
\put(30,45){\small $-$}
\put(30,60){\small $+$}
\put(30,75){\small $-$}
\put(30,90){\small $+$}
\put(30,105){\small $-$}
\put(30,120){\small $+$}
\put(30,135){\small $-$}
\put(30,150){\small $+$}
\put(30,165){\small $-$}
\put(30,180){\small $+$}
\put(30,195){\small $-$}
\put(30,210){\small $+$}
}
}
\put(70,0)
{
\put(0,45){\circle{5}}
\put(0,105){\circle{5}}
\put(0,165){\circle{5}}
\put(30,0){\circle*{5}}
\put(30,15){\circle*{5}}
\put(30,30){\circle*{5}}
\put(30,45){\circle*{5}}
\put(30,60){\circle*{5}}
\put(30,75){\circle*{5}}
\put(30,90){\circle*{5}}
\put(30,105){\circle*{5}}
\put(30,120){\circle*{5}}
\put(30,135){\circle*{5}}
\put(30,150){\circle*{5}}
\put(30,165){\circle*{5}}
\put(30,180){\circle*{5}}
\put(30,195){\circle*{5}}
\put(30,210){\circle*{5}}
%
\put(30,3){\vector(0,1){9}}
\put(30,27){\vector(0,-1){9}}
\put(30,33){\vector(0,1){9}}
\put(30,57){\vector(0,-1){9}}
\put(30,63){\vector(0,1){9}}
\put(30,87){\vector(0,-1){9}}
\put(30,93){\vector(0,1){9}}
\put(30,117){\vector(0,-1){9}}
\put(30,123){\vector(0,1){9}}
\put(30,147){\vector(0,-1){9}}
\put(30,153){\vector(0,1){9}}
\put(30,177){\vector(0,-1){9}}
\put(30,183){\vector(0,1){9}}
\put(30,207){\vector(0,-1){9}}
\put(0,48){\vector(0,1){54}}
\put(0,162){\vector(0,-1){54}}
\put(27,17){\vector(-1,1){23}}
\put(3,43){\vector(2,-1){24}}
\put(27,45){\vector(-1,0){23}}
\put(3,47){\vector(2,1){24}}
\put(27,73){\vector(-1,-1){23}}
\put(3,103){\vector(2,-1){24}}
\put(27,105){\vector(-1,0){23}}
\put(3,107){\vector(2,1){24}}
\put(27,137){\vector(-1,1){23}}
\put(3,163){\vector(2,-1){24}}
\put(27,165){\vector(-1,0){23}}
\put(3,167){\vector(2,1){24}}
\put(27,193){\vector(-1,-1){23}}
\put(3,-2)
{
\put(-17,45){\small $+$}
\put(-17,105){\small $-$}
\put(-17,165){\small $+$}
}
\put(4,-2)
{
\put(30,0){\small $+$}
\put(30,15){\small $-$}
\put(30,30){\small $+$}
\put(30,45){\small $-$}
\put(30,60){\small $+$}
\put(30,75){\small $-$}
\put(30,90){\small $+$}
\put(30,105){\small $-$}
\put(30,120){\small $+$}
\put(30,135){\small $-$}
\put(30,150){\small $+$}
\put(30,165){\small $-$}
\put(30,180){\small $+$}
\put(30,195){\small $-$}
\put(30,210){\small $+$}
}
}
\put(140,0)
{
\put(0,45){\circle{5}}
\put(0,105){\circle{5}}
\put(0,165){\circle{5}}
\put(30,0){\circle*{5}}
\put(30,15){\circle*{5}}
\put(30,30){\circle*{5}}
\put(30,45){\circle*{5}}
\put(30,60){\circle*{5}}
\put(30,75){\circle*{5}}
\put(30,90){\circle*{5}}
\put(30,105){\circle*{5}}
\put(30,120){\circle*{5}}
\put(30,135){\circle*{5}}
\put(30,150){\circle*{5}}
\put(30,165){\circle*{5}}
\put(30,180){\circle*{5}}
\put(30,195){\circle*{5}}
\put(30,210){\circle*{5}}
%
\put(30,3){\vector(0,1){9}}
\put(30,27){\vector(0,-1){9}}
\put(30,33){\vector(0,1){9}}
\put(30,57){\vector(0,-1){9}}
\put(30,63){\vector(0,1){9}}
\put(30,87){\vector(0,-1){9}}
\put(30,93){\vector(0,1){9}}
\put(30,117){\vector(0,-1){9}}
\put(30,123){\vector(0,1){9}}
\put(30,147){\vector(0,-1){9}}
\put(30,153){\vector(0,1){9}}
\put(30,177){\vector(0,-1){9}}
\put(30,183){\vector(0,1){9}}
\put(30,207){\vector(0,-1){9}}
\put(0,102){\vector(0,-1){54}}
\put(0,108){\vector(0,1){54}}
\put(3,43){\vector(2,-1){24}}
\put(27,45){\vector(-1,0){23}}
\put(3,47){\vector(2,1){24}}
\put(27,77){\vector(-1,1){23}}
\put(3,103){\vector(2,-1){24}}
\put(27,105){\vector(-1,0){23}}
\put(3,107){\vector(2,1){24}}
\put(27,133){\vector(-1,-1){23}}
\put(3,163){\vector(2,-1){24}}
\put(27,165){\vector(-1,0){23}}
\put(3,167){\vector(2,1){24}}
\put(3,-2)
{
\put(-17,45){\small $-$}
\put(-17,105){\small $+$}
\put(-17,165){\small $-$}
}
\put(4,-2)
{
\put(30,0){\small $+$}
\put(30,15){\small $-$}
\put(30,30){\small $+$}
\put(30,45){\small $-$}
\put(30,60){\small $+$}
\put(30,75){\small $-$}
\put(30,90){\small $+$}
\put(30,105){\small $-$}
\put(30,120){\small $+$}
\put(30,135){\small $-$}
\put(30,150){\small $+$}
\put(30,165){\small $-$}
\put(30,180){\small $+$}
\put(30,195){\small $-$}
\put(30,210){\small $+$}
}
}
%
\put(210,0)
{
\put(0,45){\circle{5}}
\put(0,105){\circle{5}}
\put(0,165){\circle{5}}
\put(30,0){\circle*{5}}
\put(30,15){\circle*{5}}
\put(30,30){\circle*{5}}
\put(30,45){\circle*{5}}
\put(30,60){\circle*{5}}
\put(30,75){\circle*{5}}
\put(30,90){\circle*{5}}
\put(30,105){\circle*{5}}
\put(30,120){\circle*{5}}
\put(30,135){\circle*{5}}
\put(30,150){\circle*{5}}
\put(30,165){\circle*{5}}
\put(30,180){\circle*{5}}
\put(30,195){\circle*{5}}
\put(30,210){\circle*{5}}
%
\put(30,3){\vector(0,1){9}}
\put(30,27){\vector(0,-1){9}}
\put(30,33){\vector(0,1){9}}
\put(30,57){\vector(0,-1){9}}
\put(30,63){\vector(0,1){9}}
\put(30,87){\vector(0,-1){9}}
\put(30,93){\vector(0,1){9}}
\put(30,117){\vector(0,-1){9}}
\put(30,123){\vector(0,1){9}}
\put(30,147){\vector(0,-1){9}}
\put(30,153){\vector(0,1){9}}
\put(30,177){\vector(0,-1){9}}
\put(30,183){\vector(0,1){9}}
\put(30,207){\vector(0,-1){9}}
\put(0,48){\vector(0,1){54}}
\put(0,162){\vector(0,-1){54}}
\put(27,45){\vector(-1,0){23}}
\put(3,98){\vector(2,-3){24}}
\put(27,77){\vector(-1,1){23}}
\put(3,103){\vector(2,-1){24}}
\put(27,105){\vector(-1,0){23}}
\put(3,107){\vector(2,1){24}}
\put(27,133){\vector(-1,-1){23}}
\put(3,112){\vector(2,3){24}}
\put(27,165){\vector(-1,0){23}}
\put(3,-2)
{
\put(-17,45){\small $+$}
\put(-17,105){\small $-$}
\put(-17,165){\small $+$}
}
\put(4,-2)
{
\put(30,0){\small $+$}
\put(30,15){\small $-$}
\put(30,30){\small $+$}
\put(30,45){\small $-$}
\put(30,60){\small $+$}
\put(30,75){\small $-$}
\put(30,90){\small $+$}
\put(30,105){\small $-$}
\put(30,120){\small $+$}
\put(30,135){\small $-$}
\put(30,150){\small $+$}
\put(30,165){\small $-$}
\put(30,180){\small $+$}
\put(30,195){\small $-$}
\put(30,210){\small $+$}
}
}
\put(10,-20){$Q_1$}
\put(80,-20){$Q_2$}
\put(150,-20){$Q_3$}
\put(220,-20){$Q_4$}
\put(-50,103){${\scriptstyle\ell -1}\left\{ \makebox(0,60){}\right.$}
\put(260,103){$\left. \makebox(0,112){}\right\}{\scriptstyle t\ell -1}$}
\end{picture}
%
%
\begin{picture}(360,305)(-100,-20)
%
\put(0,0)
{
\put(0,45){\circle{5}}
\put(0,105){\circle{5}}
\put(0,165){\circle{5}}
\put(0,225){\circle{5}}
\put(30,0){\circle*{5}}
\put(30,15){\circle*{5}}
\put(30,30){\circle*{5}}
\put(30,45){\circle*{5}}
\put(30,60){\circle*{5}}
\put(30,75){\circle*{5}}
\put(30,90){\circle*{5}}
\put(30,105){\circle*{5}}
\put(30,120){\circle*{5}}
\put(30,135){\circle*{5}}
\put(30,150){\circle*{5}}
\put(30,165){\circle*{5}}
\put(30,180){\circle*{5}}
\put(30,195){\circle*{5}}
\put(30,210){\circle*{5}}
\put(30,225){\circle*{5}}
\put(30,240){\circle*{5}}
\put(30,255){\circle*{5}}
\put(30,270){\circle*{5}}
%
\put(30,3){\vector(0,1){9}}
\put(30,27){\vector(0,-1){9}}
\put(30,33){\vector(0,1){9}}
\put(30,57){\vector(0,-1){9}}
\put(30,63){\vector(0,1){9}}
\put(30,87){\vector(0,-1){9}}
\put(30,93){\vector(0,1){9}}
\put(30,117){\vector(0,-1){9}}
\put(30,123){\vector(0,1){9}}
\put(30,147){\vector(0,-1){9}}
\put(30,153){\vector(0,1){9}}
\put(30,177){\vector(0,-1){9}}
\put(30,183){\vector(0,1){9}}
\put(30,207){\vector(0,-1){9}}
\put(30,213){\vector(0,1){9}}
\put(30,237){\vector(0,-1){9}}
\put(30,243){\vector(0,1){9}}
\put(30,267){\vector(0,-1){9}}
\put(0,102){\vector(0,-1){54}}
\put(0,108){\vector(0,1){54}}
\put(0,222){\vector(0,-1){54}}
\put(3,38){\vector(2,-3){24}}
\put(27,17){\vector(-1,1){23}}
\put(3,43){\vector(2,-1){24}}
\put(27,45){\vector(-1,0){23}}
\put(3,47){\vector(2,1){24}}
\put(27,73){\vector(-1,-1){23}}
\put(3,52){\vector(2,3){24}}
\put(27,105){\vector(-1,0){24}}
\put(3,158){\vector(2,-3){24}}
\put(27,137){\vector(-1,1){23}}
\put(3,163){\vector(2,-1){24}}
\put(27,165){\vector(-1,0){23}}
\put(3,167){\vector(2,1){24}}
\put(27,193){\vector(-1,-1){23}}
\put(3,172){\vector(2,3){24}}
\put(27,225){\vector(-1,0){24}}
\put(3,-2)
{
\put(-17,45){\small $-$}
\put(-17,105){\small $+$}
\put(-17,165){\small $-$}
\put(-17,225){\small $+$}
}
\put(4,-2)
{
\put(30,0){\small $+$}
\put(30,15){\small $-$}
\put(30,30){\small $+$}
\put(30,45){\small $-$}
\put(30,60){\small $+$}
\put(30,75){\small $-$}
\put(30,90){\small $+$}
\put(30,105){\small $-$}
\put(30,120){\small $+$}
\put(30,135){\small $-$}
\put(30,150){\small $+$}
\put(30,165){\small $-$}
\put(30,180){\small $+$}
\put(30,195){\small $-$}
\put(30,210){\small $+$}
\put(30,225){\small $-$}
\put(30,240){\small $+$}
\put(30,255){\small $-$}
\put(30,270){\small $+$}
}
}
\put(70,0)
{
\put(0,45){\circle{5}}
\put(0,105){\circle{5}}
\put(0,165){\circle{5}}
\put(0,225){\circle{5}}
\put(30,0){\circle*{5}}
\put(30,15){\circle*{5}}
\put(30,30){\circle*{5}}
\put(30,45){\circle*{5}}
\put(30,60){\circle*{5}}
\put(30,75){\circle*{5}}
\put(30,90){\circle*{5}}
\put(30,105){\circle*{5}}
\put(30,120){\circle*{5}}
\put(30,135){\circle*{5}}
\put(30,150){\circle*{5}}
\put(30,165){\circle*{5}}
\put(30,180){\circle*{5}}
\put(30,195){\circle*{5}}
\put(30,210){\circle*{5}}
\put(30,225){\circle*{5}}
\put(30,240){\circle*{5}}
\put(30,255){\circle*{5}}
\put(30,270){\circle*{5}}
%
\put(30,3){\vector(0,1){9}}
\put(30,27){\vector(0,-1){9}}
\put(30,33){\vector(0,1){9}}
\put(30,57){\vector(0,-1){9}}
\put(30,63){\vector(0,1){9}}
\put(30,87){\vector(0,-1){9}}
\put(30,93){\vector(0,1){9}}
\put(30,117){\vector(0,-1){9}}
\put(30,123){\vector(0,1){9}}
\put(30,147){\vector(0,-1){9}}
\put(30,153){\vector(0,1){9}}
\put(30,177){\vector(0,-1){9}}
\put(30,183){\vector(0,1){9}}
\put(30,207){\vector(0,-1){9}}
\put(30,213){\vector(0,1){9}}
\put(30,237){\vector(0,-1){9}}
\put(30,243){\vector(0,1){9}}
\put(30,267){\vector(0,-1){9}}
\put(0,48){\vector(0,1){54}}
\put(0,162){\vector(0,-1){54}}
\put(0,168){\vector(0,1){54}}
\put(27,17){\vector(-1,1){23}}
\put(3,43){\vector(2,-1){24}}
\put(27,45){\vector(-1,0){23}}
\put(3,47){\vector(2,1){24}}
\put(27,73){\vector(-1,-1){23}}
\put(3,103){\vector(2,-1){24}}
\put(27,105){\vector(-1,0){23}}
\put(3,107){\vector(2,1){24}}
\put(27,137){\vector(-1,1){23}}
\put(3,163){\vector(2,-1){24}}
\put(27,165){\vector(-1,0){23}}
\put(3,167){\vector(2,1){24}}
\put(27,193){\vector(-1,-1){23}}
\put(3,223){\vector(2,-1){24}}
\put(27,225){\vector(-1,0){23}}
\put(3,227){\vector(2,1){24}}
\put(3,-2)
{
\put(-17,45){\small $+$}
\put(-17,105){\small $-$}
\put(-17,165){\small $+$}
\put(-17,225){\small $-$}
}
\put(4,-2)
{
\put(30,0){\small $+$}
\put(30,15){\small $-$}
\put(30,30){\small $+$}
\put(30,45){\small $-$}
\put(30,60){\small $+$}
\put(30,75){\small $-$}
\put(30,90){\small $+$}
\put(30,105){\small $-$}
\put(30,120){\small $+$}
\put(30,135){\small $-$}
\put(30,150){\small $+$}
\put(30,165){\small $-$}
\put(30,180){\small $+$}
\put(30,195){\small $-$}
\put(30,210){\small $+$}
\put(30,225){\small $-$}
\put(30,240){\small $+$}
\put(30,255){\small $-$}
\put(30,270){\small $+$}
}
}
\put(140,0)
{
\put(0,45){\circle{5}}
\put(0,105){\circle{5}}
\put(0,165){\circle{5}}
\put(0,225){\circle{5}}
\put(30,0){\circle*{5}}
\put(30,15){\circle*{5}}
\put(30,30){\circle*{5}}
\put(30,45){\circle*{5}}
\put(30,60){\circle*{5}}
\put(30,75){\circle*{5}}
\put(30,90){\circle*{5}}
\put(30,105){\circle*{5}}
\put(30,120){\circle*{5}}
\put(30,135){\circle*{5}}
\put(30,150){\circle*{5}}
\put(30,165){\circle*{5}}
\put(30,180){\circle*{5}}
\put(30,195){\circle*{5}}
\put(30,210){\circle*{5}}
\put(30,225){\circle*{5}}
\put(30,240){\circle*{5}}
\put(30,255){\circle*{5}}
\put(30,270){\circle*{5}}
%
\put(30,3){\vector(0,1){9}}
\put(30,27){\vector(0,-1){9}}
\put(30,33){\vector(0,1){9}}
\put(30,57){\vector(0,-1){9}}
\put(30,63){\vector(0,1){9}}
\put(30,87){\vector(0,-1){9}}
\put(30,93){\vector(0,1){9}}
\put(30,117){\vector(0,-1){9}}
\put(30,123){\vector(0,1){9}}
\put(30,147){\vector(0,-1){9}}
\put(30,153){\vector(0,1){9}}
\put(30,177){\vector(0,-1){9}}
\put(30,183){\vector(0,1){9}}
\put(30,207){\vector(0,-1){9}}
\put(30,213){\vector(0,1){9}}
\put(30,237){\vector(0,-1){9}}
\put(30,243){\vector(0,1){9}}
\put(30,267){\vector(0,-1){9}}
\put(0,102){\vector(0,-1){54}}
\put(0,108){\vector(0,1){54}}
\put(0,222){\vector(0,-1){54}}
\put(3,43){\vector(2,-1){24}}
\put(27,45){\vector(-1,0){23}}
\put(3,47){\vector(2,1){24}}
\put(27,77){\vector(-1,1){23}}
\put(3,103){\vector(2,-1){24}}
\put(27,105){\vector(-1,0){23}}
\put(3,107){\vector(2,1){24}}
\put(27,133){\vector(-1,-1){23}}
\put(3,163){\vector(2,-1){24}}
\put(27,165){\vector(-1,0){23}}
\put(3,167){\vector(2,1){24}}
\put(27,197){\vector(-1,1){23}}
\put(3,223){\vector(2,-1){24}}
\put(27,225){\vector(-1,0){23}}
\put(3,227){\vector(2,1){24}}
\put(27,253){\vector(-1,-1){23}}
\put(3,-2)
{
\put(-17,45){\small $-$}
\put(-17,105){\small $+$}
\put(-17,165){\small $-$}
\put(-17,225){\small $+$}
}
\put(4,-2)
{
\put(30,0){\small $+$}
\put(30,15){\small $-$}
\put(30,30){\small $+$}
\put(30,45){\small $-$}
\put(30,60){\small $+$}
\put(30,75){\small $-$}
\put(30,90){\small $+$}
\put(30,105){\small $-$}
\put(30,120){\small $+$}
\put(30,135){\small $-$}
\put(30,150){\small $+$}
\put(30,165){\small $-$}
\put(30,180){\small $+$}
\put(30,195){\small $-$}
\put(30,210){\small $+$}
\put(30,225){\small $-$}
\put(30,240){\small $+$}
\put(30,255){\small $-$}
\put(30,270){\small $+$}
}
}
%
\put(210,0)
{
\put(0,45){\circle{5}}
\put(0,105){\circle{5}}
\put(0,165){\circle{5}}
\put(0,225){\circle{5}}
\put(30,0){\circle*{5}}
\put(30,15){\circle*{5}}
\put(30,30){\circle*{5}}
\put(30,45){\circle*{5}}
\put(30,60){\circle*{5}}
\put(30,75){\circle*{5}}
\put(30,90){\circle*{5}}
\put(30,105){\circle*{5}}
\put(30,120){\circle*{5}}
\put(30,135){\circle*{5}}
\put(30,150){\circle*{5}}
\put(30,165){\circle*{5}}
\put(30,180){\circle*{5}}
\put(30,195){\circle*{5}}
\put(30,210){\circle*{5}}
\put(30,225){\circle*{5}}
\put(30,240){\circle*{5}}
\put(30,255){\circle*{5}}
\put(30,270){\circle*{5}}
%
\put(30,3){\vector(0,1){9}}
\put(30,27){\vector(0,-1){9}}
\put(30,33){\vector(0,1){9}}
\put(30,57){\vector(0,-1){9}}
\put(30,63){\vector(0,1){9}}
\put(30,87){\vector(0,-1){9}}
\put(30,93){\vector(0,1){9}}
\put(30,117){\vector(0,-1){9}}
\put(30,123){\vector(0,1){9}}
\put(30,147){\vector(0,-1){9}}
\put(30,153){\vector(0,1){9}}
\put(30,177){\vector(0,-1){9}}
\put(30,183){\vector(0,1){9}}
\put(30,207){\vector(0,-1){9}}
\put(30,213){\vector(0,1){9}}
\put(30,237){\vector(0,-1){9}}
\put(30,243){\vector(0,1){9}}
\put(30,267){\vector(0,-1){9}}
\put(0,102){\vector(0,-1){54}}
\put(0,108){\vector(0,1){54}}
\put(0,222){\vector(0,-1){54}}
\put(27,45){\vector(-1,0){23}}
\put(3,98){\vector(2,-3){24}}
\put(27,77){\vector(-1,1){23}}
\put(3,103){\vector(2,-1){24}}
\put(27,105){\vector(-1,0){23}}
\put(3,107){\vector(2,1){24}}
\put(27,133){\vector(-1,-1){23}}
\put(3,112){\vector(2,3){24}}
\put(27,165){\vector(-1,0){23}}
\put(3,218){\vector(2,-3){24}}
\put(27,197){\vector(-1,1){23}}
\put(3,223){\vector(2,-1){24}}
\put(27,225){\vector(-1,0){23}}
\put(3,227){\vector(2,1){24}}
\put(27,253){\vector(-1,-1){23}}
\put(3,232){\vector(2,3){24}}
\put(3,-2)
{
\put(-17,45){\small $+$}
\put(-17,105){\small $-$}
\put(-17,165){\small $+$}
\put(-17,225){\small $-$}
}
\put(4,-2)
{
\put(30,0){\small $+$}
\put(30,15){\small $-$}
\put(30,30){\small $+$}
\put(30,45){\small $-$}
\put(30,60){\small $+$}
\put(30,75){\small $-$}
\put(30,90){\small $+$}
\put(30,105){\small $-$}
\put(30,120){\small $+$}
\put(30,135){\small $-$}
\put(30,150){\small $+$}
\put(30,165){\small $-$}
\put(30,180){\small $+$}
\put(30,195){\small $-$}
\put(30,210){\small $+$}
\put(30,225){\small $-$}
\put(30,240){\small $+$}
\put(30,255){\small $-$}
\put(30,270){\small $+$}
}
}
\put(10,-20){$Q_1$}
\put(80,-20){$Q_2$}
\put(150,-20){$Q_3$}
\put(220,-20){$Q_4$}
\put(-50,135){${\scriptstyle\ell -1}\left\{ \makebox(0,93){}\right.$}
\put(260,135){$\left. \makebox(0,140){}\right\}{\scriptstyle t\ell -1}$}
\end{picture}
\caption{The quiver $Q_{\ell}(M_t)$ with $t=4$  for even $\ell$
(upper) and for odd $\ell$ (lower),
where we identify the right columns in all the  quivers
$Q_1$, \dots, $Q_4$.}
\label{fig:quiverB}
\end{figure}
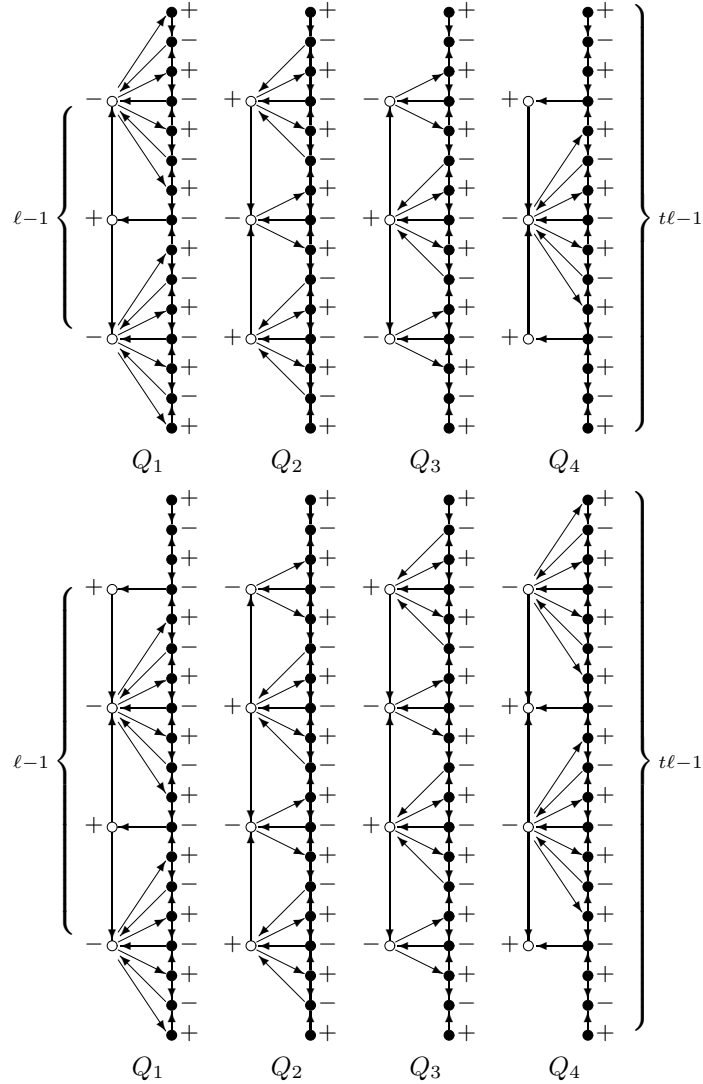

Let us choose  the index set $\mathbf{I}$
of the vertices of $Q_{\ell}(M_t)$
as before so that $\mathbf{i}=(i,i')\in \mathbf{I}$ represents
the vertex 
at the $i'$th row (from the bottom)
of the left column in $Q_i$ for $i=1,\dots,t$,
and the one of the right column in any quiver
for $i=t+1$.
We use the same notations
$\mathbf{I}^{\circ}_{\pm,k}$
$\mathbf{I}^{\bullet}_{\pm}$ as before.
For $k,k'\in \{1,\dots,t\}$, $k\neq k'$,
 let $\mathbf{I}^{\circ}_{\pm,k,k'}=
\mathbf{I}^{\circ}_{\pm,k}\sqcup 
\mathbf{I}^{\circ}_{\pm,k'}$.
We define composite mutations,
\begin{align}
\label{eq:Bmupm2}
\mu^{\circ}_{+,k,k'}=\prod_{\mathbf{i}\in\mathbf{I}^{\circ}_{+,k,k'}}
\mu_{\mathbf{i}},
\quad
\mu^{\circ}_{-,k,k'}=\prod_{\mathbf{i}\in\mathbf{I}^{\circ}_{-,k,k'}}
\mu_{\mathbf{i}},
\quad
\mu^{\bullet}_+=\prod_{\mathbf{i}\in\mathbf{I}^{\bullet}_+}
\mu_{\mathbf{i}},
\quad
\mu^{\bullet}_-=\prod_{\mathbf{i}\in\mathbf{I}^{\bullet}_-}
\mu_{\mathbf{i}}.
\end{align}

\begin{Lemma}
\label{lem:BQmut}
Let $Q(0):=Q_{\ell}(M_t)$.
We have the following periodic sequence of mutations of quivers:
\begin{align}
\label{eq:BB2}
\begin{matrix}
Q(0)
& 
\displaystyle
\mathop{\longleftrightarrow}^{\mu^{\bullet}_+
\mu^{\circ}_{+,1,t}}
& Q(\frac{1}{t})
&
\displaystyle
\mathop{\longleftrightarrow}^{\mu^{\bullet}_-
}
&
Q(\frac{2}{t})
&
\displaystyle
\mathop{\longleftrightarrow}^{\mu^{\bullet}_+
\mu^{\circ}_{+,3,t-2}}&
Q(\frac{3}{t})
&
\displaystyle
\mathop{\longleftrightarrow}^{\mu^{\bullet}_-
}
&
Q(\frac{4}{t})
\\
&
\displaystyle
\mathop{\longleftrightarrow}^{\mu^{\bullet}_+
\mu^{\circ}_{+,5,t-4}}
&
&
\cdots
&&
\displaystyle
\mathop{\longleftrightarrow}^{\mu^{\bullet}_+
\mu^{\circ}_{+,t-1,2}}
&
Q(\frac{t-1}{t})
&
\displaystyle
\mathop{\longleftrightarrow}^{\mu^{\bullet}_-
}
&
Q(1)
&
\\
& 
\displaystyle
\mathop{\longleftrightarrow}^{\mu^{\bullet}_+
\mu^{\circ}_{-,1,t}}
&
Q(\frac{t+1}{t})
&
\displaystyle
\mathop{\longleftrightarrow}^{\mu^{\bullet}_-
}
&
Q(\frac{t+2}{t})
&
\displaystyle
\mathop{\longleftrightarrow}^{\mu^{\bullet}_+
\mu^{\circ}_{-,3,t-2}}
&
Q(\frac{t+3}{t})
&
\displaystyle
\mathop{\longleftrightarrow}^{\mu^{\bullet}_-
}
&
Q(\frac{t+4}{t})
\\
&
\displaystyle
\mathop{\longleftrightarrow}^{\mu^{\bullet}_+
\mu^{\circ}_{-,5,t-4}}
&
&
\cdots
&
&
\displaystyle
\mathop{\longleftrightarrow}^{\mu^{\bullet}_+
\mu^{\circ}_{-,t-1,2}}
&
Q(\frac{2t-1}{t})
&
\displaystyle
\mathop{\longleftrightarrow}^{\mu^{\bullet}_-
}
&
Q(2)=Q(0).
&
\\
\end{matrix}
\end{align}
Here, the quiver $Q(p/t)$ $(p=1,\dots,2t)$ is defined by
\begin{align}
Q(p/t):=
\begin{cases}
\tilde{w}_p(Q)^{\mathrm{op}}&\mbox{$p$: odd}\\
\tilde{w}_p(Q)&\mbox{$p$: even},
\end{cases}
\end{align}
and $w_p$ is a permutation
of $\{1,\dots,t\}$ defined by
\begin{align}
w_p&=
\begin{cases}
 r_+ r_- \cdots r_+ \ \mbox{\em{($p$ terms)}} &\mbox{$p$: odd}\\
 r_+ r_- \cdots r_- \ \mbox{\em{($p$ terms)}}& \mbox{$p$: even},
\end{cases}\\
r_+&=(23)(45)\cdots (r-2,r-1),\quad
r_-=(12)(34)\cdots (r-1,r),
\end{align}
where $(ij)$ is the transposition of $i$ and $j$.
\end{Lemma}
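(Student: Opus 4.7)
The plan is to mirror the strategy used for the odd $t$ case in Lemma \ref{lem:GQmut}: verify the periodicity by tracking the image of each subquiver $Q_i$ ($i=1,\dots,t$) under the successive composite mutations, and then combine this with the easily checked transformation of the shared right column (the $\bullet$-vertices). The permutation $w_p$ records, at the level of the left columns, how the subquivers are reshuffled, while the alternation between $Q(p/t)$ and its opposite records how each mutation step flips the orientations of the arrows incident to the mutated vertices.

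The crucial structural observation, and the one that distinguishes this case from the odd case, is the following: when $t$ is even, every horizontal arrow in $Q_\ell(M_t)$ enters a vertex of sign $-$ in the right column (this is emphasized in Sec.~\ref{subsect:quivereven}). Consequently, the horizontal arrows are only ``active'' at the half-steps where $\mu^\bullet_-$ is applied alone: applying $\mu^\bullet_-$ to $Q(2k/t)$ flips all horizontal arrows and propagates them upward/downward by one row in the right column, thereby correctly implementing the transition to $Q((2k+1)/t)$ without any $\mu^\circ$ contribution. This is why the sequence \eqref{eq:BB2} contains naked $\mu^\bullet_-$ steps, in contrast to the odd case. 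The composite mutations $\mu^\circ_{\pm,k,k'}$ then act at the other half-steps, flipping two left columns simultaneously; the fact that these two columns can be mutated in a single step (in any order) follows because, by induction, at each such step $\mathbf{I}^\circ_{\pm,k}$ and $\mathbf{I}^\circ_{\pm,k'}$ are sets of mutually non-adjacent vertices, so their mutations commute.

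Concretely, I would first verify the initial step $Q(0)\leftrightarrow Q(1/t)$ directly from Fig.~\ref{fig:quiverB} by applying $\mu^\bullet_+\mu^\circ_{+,1,t}$ vertex-by-vertex using the quiver mutation rule recalled in Sec.~\ref{sec:cluster} (add an arrow $i\to j$ for each pair $i\to k\to j$, cancel $2$-cycles, reverse arrows at $k$). Then for a general $p$, I would track, exactly as in the proof of Lemma \ref{lem:GQmut}, how $Q_1$ moves through the positions $Q_t, Q_{t-1},\ldots$ and back, how $Q_2$ moves starting one step later, and so on, reading off the permutation $w_p$ from the index of the position occupied at stage $p$. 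Consistency of the alternation of opposite/normal orientations with $\tilde{w}_p$ at each half-step is then a formal check.

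The main obstacle is handling the two ``boundary'' subquivers $Q_1$ and $Q_t$, which carry the horizontal arrows: I must verify carefully that at each half-step, the action of $\mu^\bullet_\pm$ on the shared right column is compatible with the action of $\mu^\circ_{\pm,k,k'}$ on the relevant left columns, in the sense that no $2$-cycle is created and that the new horizontal arrows land on the correct $-$-vertices (since the parity of the signs in the right column is preserved by the mutation sequence). Beyond this bookkeeping, the rest of the argument is a routine, though lengthy, diagram chase entirely parallel to the odd case, and I would, following the author's convention in Sec.~\ref{sect:teven}, present the salient mutation steps schematically rather than grinding through every case.
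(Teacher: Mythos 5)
Your proposal is correct and matches the paper's intent: the paper in fact omits the proof of this lemma entirely, stating at the start of Sec.~\ref{sect:teven} that the even-$t$ case ``is parallel to the former case and we omit proofs,'' and the proof it points to (that of Lemma \ref{lem:GQmut}) is precisely the subquiver-tracking verification you describe — checking the first step directly from the figure and then recording how each $Q_i$ cycles through the positions to read off $w_p$ and the alternation with the opposite quiver. Your additional observations (the horizontal arrows landing on $-$-vertices of the right column forcing the naked $\mu^{\bullet}_-$ steps, and the commutativity of the mutations within $\mu^{\circ}_{\pm,k,k'}$ since distinct left columns are not adjacent) are correct and supply exactly the bookkeeping the paper leaves implicit.
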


\begin{Example}
The mutation sequence
\eqref{eq:BB2} for $t=4$ is explicitly given 
in Fig. \ref{fig:labelxB1}.
where only a part of each quiver is presented as before.
\end{Example}

\begin{figure}[t]
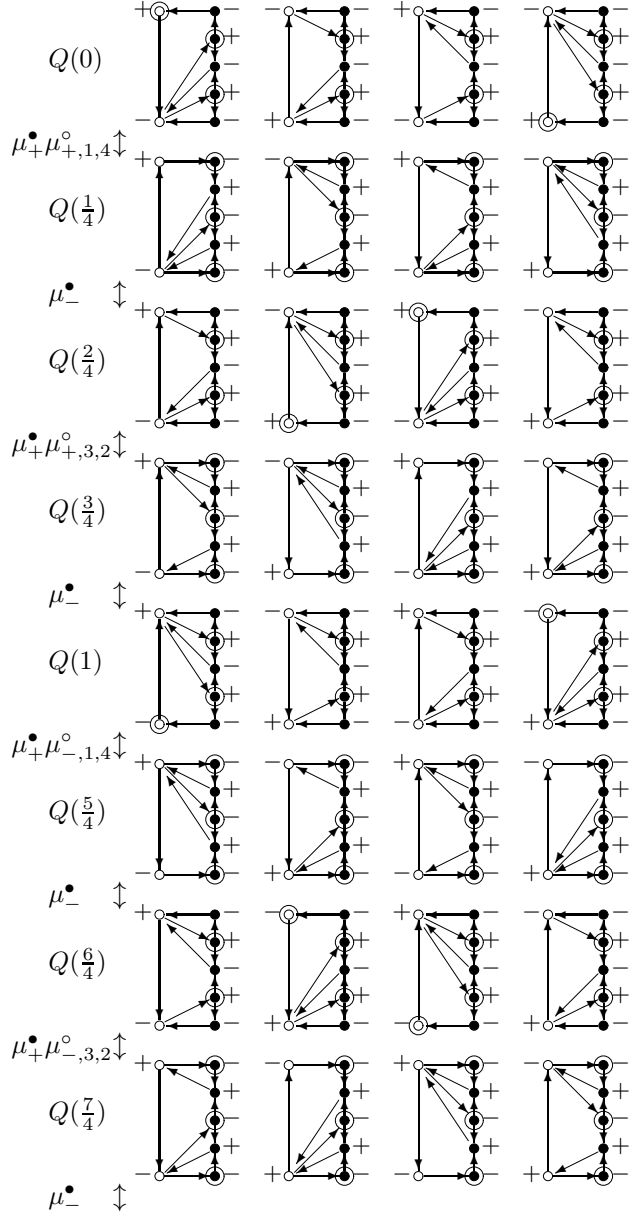

\setlength{\unitlength}{0.7pt}

%
%
\caption{
The mutation sequence of the quiver $Q_{\ell}(M_t)$
in  \eqref{eq:BB2} for $t=4$.
}
\label{fig:labelxB1}
\end{figure}

\subsection{Embedding maps}

Let $B=B_{\ell}(M_t)$
be the corresponding skew-symmetric matrix 
to the quiver $Q_{\ell}(M_t)$ for even $t$.
Let $\mathcal{A}(B,x,y)$ 
be the cluster algebra
 with coefficients
in the universal
semifield, and let $\mathcal{G}(B,y)$
be the coefficient group associated with $\mathcal{A}(B,x,y)$
as before.

In view of Lemma \ref{lem:BQmut}
we set $x(0)=x$, $y(0)=y$ and define 
clusters $x(u)=(x_{\mathbf{i}}(u))_{\mathbf{i}\in \mathbf{I}}$
 ($u\in \frac{1}{t}\mathbb{Z}$)
 and coefficient tuples $y(u)=(y_\mathbf{i}(u))_{\mathbf{i}\in \mathbf{I}}$
 ($u\in \frac{1}{t}\mathbb{Z}$)
by the sequence of mutations
\begin{align}
\label{eq:Bmutseq}
\begin{matrix}
\cdots
&
\displaystyle
\mathop{\longleftrightarrow}^{\mu^{\bullet}_-
}
&
(B(0),x(0),y(0))
& 
\displaystyle
\mathop{\longleftrightarrow}^{\mu^{\bullet}_+
\mu^{\circ}_{+,1,t}}
& (B(\frac{1}{t}),x(\frac{1}{t}),y(\frac{1}{t}))\\
&
\displaystyle
\mathop{\longleftrightarrow}^{\mu^{\bullet}_-
}
&
\cdots
&
\displaystyle
\mathop{\longleftrightarrow}^{\mu^{\bullet}_-
}
&
(B(2),x(2),y(2))
&
\displaystyle
\mathop{\longleftrightarrow}^{\mu^{\bullet}_+
\mu^{\circ}_{+,1,t}}
&
\cdots,
\\
\end{matrix}
\end{align}
where $B(u)$ is the skew-symmetric matrix corresponding to
$Q(u)$.

For  $(\mathbf{i},u)\in
 \mathbf{I}\times \frac{1}{t}\mathbb{Z}$,
we set the parity condition $\mathbf{p}_+$ by
\begin{align}
\mathbf{p}_+:&
\begin{cases}
 \mathbf{i}\in 
\mathbf{I}^{\bullet}_+ \sqcup 
\mathbf{I}^{\circ}_{+,p+1,t-p}
& u\equiv \frac{p}{t}, 0\leq p\leq t-1, \mbox{$p$: even}\\
 \mathbf{i}\in 
\mathbf{I}^{\bullet}_+ \sqcup 
\mathbf{I}^{\circ}_{-,p+1-t,2t-p}
& u\equiv \frac{p}{t}, t\leq p\leq 2t-1, \mbox{$p$: even}\\
 \mathbf{i}\in 
\mathbf{I}^{\bullet}_- 
& u\equiv \frac{p}{t}, 0\leq p\leq 2t-1, \mbox{$p$: odd},\\
\end{cases}
\end{align}
where $\equiv$ is modulo $2\mathbb{Z}$.
Again, each $(\mathbf{i},u):\mathbf{p}_+$
is a mutation point of
\eqref{eq:BB2} in the forward
direction of $u$.

\begin{Lemma}
Below $\equiv$ means the equivalence modulo $2\mathbb{Z}$.
\par
(i)
The map
$g: \mathcal{I}_{\ell+}\rightarrow 
 \{ (\mathbf{i},u): \mathbf{p}_+
\}$
\begin{align}
\textstyle
(a,m,u-\frac{d_a}{t})\mapsto 
\begin{cases}
((2j+1,m),u)& a= 1;
m+u\equiv \frac{2j}{t}\\
&(j=0,1,\dots,t/2-1)\\
((2t-2j,m),u)& a= 1;
m+u\equiv \frac{2j}{t}\\
&(j=t/2,\dots,t-1)\\
((t+1,m),u)& \mbox{\rm  $a=2$}\\
\end{cases}
\end{align}
is a bijection.
\par
(ii)
The map
$g': \mathcal{I}'_{\ell+}\rightarrow 
 \{ (\mathbf{i},u): \mathbf{p}_+
\}$
\begin{align}
(a,m,u)\mapsto 
\begin{cases}
((2j+1,m),u)& a= 1;
m+u\equiv \frac{2j}{t}\\
&(j=0,1,\dots,t/2-1)\\
((2t-2j,m),u)& a= 1;
m+u\equiv \frac{2j}{t}\\
&(j=t/2,\dots,t-1)\\
((t+1,m),u)& \mbox{\rm  $a=2$}\\
\end{cases}
\end{align}
is a bijection.
\end{Lemma}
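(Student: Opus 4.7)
The proof will follow the same pattern as Lemma \ref{lem:gmap} in the odd $t$ case, since the only genuinely new data here are the parity conventions \eqref{eq:BPcond} and \eqref{eq:BPcond2} and the revised quiver $Q_\ell(M_t)$ of Fig. \ref{fig:quiverB}. The plan is to reduce (i) to (ii) via a parity shift, then verify (ii) directly from the parity condition $\mathbf{p}_+$ by a case-by-case check.

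First I would observe that (i) and (ii) are equivalent. Indeed, from \eqref{eq:BPcond} and \eqref{eq:BPcond2} one reads off the same relation as \eqref{eq:PP'}, namely
\[
(a,m,u):\mathbf{P}'_+ \ \Longleftrightarrow\ (a,m,u\pm\tfrac{d_a}{t}):\mathbf{P}_+,
\]
so the map $g$ factors as the shift $(a,m,u-d_a/t)\mapsto (a,m,u)$ (which bijects $\mathcal{I}_{\ell+}$ with $\mathcal{I}'_{\ell+}$) followed by $g'$. Hence it suffices to prove (ii).

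Next, to prove (ii) I would examine the target set $\{(\mathbf{i},u):\mathbf{p}_+\}$ using the even-$t$ analogue of \eqref{eq:GQparity} given in the statement. The key point is that, for $u\equiv p/t\pmod{2}$ with $p$ even, the set of $\mathbf{i}$ with $(\mathbf{i},u):\mathbf{p}_+$ is $\mathbf{I}^{\bullet}_+\sqcup\mathbf{I}^{\circ}_{\varepsilon,k,k'}$ where $k,k'$ are the two indices of the left-column subquivers singled out by the permutation $w_p$ (and $\varepsilon=+$ or $-$ according as $0\leq p\leq t-1$ or $t\leq p\leq 2t-1$); whereas for $p$ odd, it is just $\mathbf{I}^{\bullet}_-$. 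For each $(a,m,u)\in \mathcal{I}'_{\ell+}$ one checks that the prescription \eqref{eq:g'} (adapted to the even-$t$ indexing $j=0,\dots,t/2-1$ and $j=t/2,\dots,t-1$) lands exactly in the correct $\mathbf{I}^{\circ}_{\varepsilon,k,k'}$ when $a=1$, and in $\mathbf{I}^{\bullet}_\pm$ when $a=2$, with the correct sign assignment. This is essentially bookkeeping with the parities $tu\bmod 2$ and $(m+tu)\bmod 2$; once set up it matches the pattern visible for $t=4$ in Fig. \ref{fig:labelxB1}, where the encircled vertices at each $Q(p/4)$ correspond to $\mathbf{p}_+$ and can be labeled by $(a,m,p/4)$ via $g'$.

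Finally, bijectivity follows because both sides have the same cardinality when restricted to a single value of $u\in \frac{1}{t}\mathbb{Z}/2\mathbb{Z}$ (namely $\ell-1$ vertices per relevant $Q_k$ on the left, plus $t\ell-1$ on the right), matching exactly the count $|\{(a,m):a=1,m=1,\dots,\ell-1\}|+|\{(a,m):a=2,m=1,\dots,t\ell-1\}|$ that satisfies $\mathbf{P}'_+$ for a given residue of $u$. I expect the only subtle point—and thus the main obstacle—to be the careful matching between the parity of $p$ (or equivalently $tu$) and the index $j$ labeling the two subquivers $Q_{2j+1}$ and $Q_{2t-2j}$ paired by the permutation $w_p$, because the even-$t$ pattern differs from the odd-$t$ one in that horizontal arrows only connect to vertices with sign $-$ on the right column. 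Once this matching is organized explicitly, no further calculation is needed and the proof of (ii) reduces to inspection.
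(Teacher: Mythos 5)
Your proposal is correct and matches the paper's intended argument: the paper omits the proof of this even-$t$ lemma entirely (stating at the start of Sec.~\ref{sect:teven} that everything is parallel to the odd case), and the odd-$t$ proof of Lemma~\ref{lem:gmap} is precisely your reduction of (i) to (ii) via the shift relation $(a,m,u):\mathbf{P}'_+\Leftrightarrow(a,m,u\pm\frac{d_a}{t}):\mathbf{P}_+$ followed by inspection of the mutation figures. Your added cardinality count and the explicit flagging of the $j\leftrightarrow(Q_{2j+1},Q_{2t-2j})$ matching are harmless refinements of the same bookkeeping the paper leaves to the reader.
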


\subsection{T-system, Y-system, and cluster algebra}

All the properties depending on the parity of $t$ are
now
absorbed in the quiver $Q_{\ell}(M_t)$,
the mutation sequence \eqref{eq:Bmutseq},
and the embedding maps $g$ and $g'$ in Lemma \ref{lem:gmap}.
Lemmas \ref{lem:Gx2},
\ref{lem:Gy2}, and Theorems \ref{thm:GTiso},
\ref{thm:GYiso} are true for even $t$.

\section{Cluster algebraic formulation: Tree case}
\label{sect:tree}

In this section we extend
Theorems \ref{thm:GTiso}
and \ref{thm:GYiso} to any
tamely laced  Cartan matrix $C$
whose Dynkin diagram is a tree,
by patching parity conditions and
 quivers introduced in Secs.~\ref{sect:todd} and \ref{sect:teven}.
This is an intermediate step for treating the most general
case in Sec.~\ref{sect:general}.

\subsection{Parity decompositions 
of T and Y-systems}

\label{subsect:parity}
Throughout this section
we assume that $C$ is a 
tamely laced and indecomposable Cartan matrix
whose Dynkin diagram $X(C)$ is a tree,
i.e., without cycles.

We decompose the index set $I$ of $X(C)$ into
two parts $I=I_+\sqcup I_-$ such that
the following two rules are satisfied:

\begin{itemize}
\item[(I)] If $a$ and $b$ are adjacent in $X(C)$
and both $d_a$ and $d_b$ are odd,
then either $a\in I_+$, $b\in I_-$ or
$a\in I_+$, $b\in I_-$ holds.

\item[(II)] If $a$ and $b$ are adjacent in $X(C)$
and at least one of $d_a$ and $d_b$ is even,
then either $a,b\in I_+$ or
$a,b\in I_-$ holds.
\end{itemize}


To each $a\in I$ with $d_a$ even,
we also attach the `color' $c_{a}= \alpha$ or $\beta$
satisfying the following condition:

\begin{itemize}
\item[(III)] If $a$ and $b$ are adjacent in $X(C)$,
then $c_a\neq c_b$.
\end{itemize}

See Fig.~\ref{fig:parity} for an example.
(The coloring is not used in this subsection.)

\begin{figure}[t]
\begin{picture}(120,31)(-105,-15)
%
%
\put(0,0){\circle{6}}
\put(20,0){\circle{6}}
\put(40,0){\circle{6}}
\put(60,0){\circle{6}}
\put(80,0){\circle{6}}
\put(100,0){\circle{6}}
\put(120,0){\circle{6}}
\drawline(3,0)(17,0)
\drawline(22,-2)(38,-2)
\drawline(22,2)(38,2)
\drawline(23,0)(37,0)
\drawline(27,6)(33,0)
\drawline(27,-6)(33,0)
\drawline(42,-2)(58,-2)
\drawline(42,2)(58,2)
\drawline(47,0)(53,6)
\drawline(47,0)(53,-6)
\drawline(63,0)(77,0)
\drawline(82,-2)(98,-2)
\drawline(82,2)(98,2)
\drawline(87,6)(93,0)
\drawline(87,-6)(93,0)
\drawline(103,0)(117,0)
\put(-22,10){\small $d_a$}
\put(-2,10){\small $3$}
\put(18,10){\small $3$}
\put(38,10){\small $1$}
\put(58,10){\small $2$}
\put(78,10){\small $2$}
\put(98,10){\small $1$}
\put(118,10){\small $1$}
\put(-3,-15){\small $+$}
\put(17,-15){\small $-$}
\put(37,-15){\small $+$}
\put(52,-15){\small $+,\alpha$}
\put(72,-15){\small $+,\beta$}
\put(97,-15){\small $+$}
\put(117,-15){\small $-$}
\end{picture}
\caption{Example of a decomposition 
and a coloring of $I$.}
\label{fig:parity}
\end{figure}
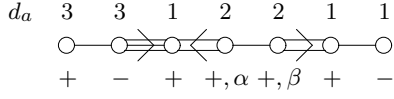

For a triplet $(a,m,u)\in \mathcal{I}_{\ell}$,
we set the parity conditions $\mathbf{Q}_{+}$ 
as follows.
\begin{align}
\mathbf{Q}_+&=
\begin{cases}
\mbox{\rm (o+)  $m+tu$ is even}& \mbox{$d_a$ is odd, $a\in I_+$}\\
\mbox{\rm (o$-$) $m+tu$ is odd}& \mbox{$d_a$ is odd, $a\in I_-$}\\
\mbox{\rm (e+)  $tu$ is odd}& \mbox{$d_a$ is even, $a\in I_+$}\\
\mbox{\rm (e$-$) $tu$ is even}& \mbox{$d_a$ is even, $a\in I_-$}\\
\end{cases}
\end{align}
Let $\mathbf{Q}_{-}$ be the negation of $\mathbf{Q}_{+}$.
Suppose that $a$ and $b$ in $I$ with $d_a\geq d_b$
are adjacent in $X(C)$.
Due to the condition \eqref{eq:Ccond1}, we have 
four possibilities:
(i) $d_a$ is odd and $d_b=1$,
(ii) $d_a=d_b$, and $d_a$ is odd and not 1,
(iii) $d_a$ is even and $d_b=1$,
(iv) $d_a=d_b$, and $d_a$ is even.
For (i), the condition $\mathbf{Q}_+$ is compatible with
$\mathbf{P}_{\pm}$ in \eqref{eq:GPcond} with $t=d_a$ therein.
For (iii),
 the condition $\mathbf{Q}_+$ is compatible with
$\mathbf{P}_{\pm}$ in \eqref{eq:BPcond} with $t=d_a$ therein.
For (ii) and (iv),
one can directly check that
 the condition $\mathbf{Q}_+$ is compatible with
\eqref{eq:T1}.
Therefore, we have the parity decomposition
\begin{align}
\EuScript{T}^{\circ}_{\ell}(C)
\simeq
\EuScript{T}^{\circ}_{\ell}(C)_+
\otimes_{\mathbb{Z}}
\EuScript{T}^{\circ}_{\ell}(C)_-,
\end{align}
where
$\EuScript{T}^{\circ}_{\ell}(C)_{\varepsilon}$
($\varepsilon=\pm$)
is the subring of $\EuScript{T}^{\circ}_{\ell}(C)$
generated by
 $T^{(a)}_m(u)$
$((a,m,u):\mathbf{Q}_{\varepsilon})$.

Similarly,
for a triplet $(a,m,u)\in \mathcal{I}_{\ell}$,
we set the parity conditions $\mathbf{Q}'_{+}$ 
as follows.
\begin{align}
\mathbf{Q}'_+&=
\begin{cases}
\mbox{\rm (o+)  $m+tu$ is odd}& \mbox{$d_a$ is odd, $a\in I_+$}\\
\mbox{\rm (o$-$) $m+tu$ is even}& \mbox{$d_a$ is odd, $a\in I_-$}\\
\mbox{\rm (e+)  $tu$ is odd}& \mbox{$d_a$ is even, $a\in I_+$}\\
\mbox{\rm (e$-$) $tu$ is even}& \mbox{$d_a$ is even, $a\in I_-$}\\
\end{cases}
\end{align}
We have
\begin{align}
(a,m,u):\mathbf{Q}'_+
\ \Longleftrightarrow \
\textstyle (a,m,u\pm\frac{d_a}{t}):\mathbf{Q}_+.
\end{align}
Let $\mathbf{Q}'_{-}$ be the negation of $\mathbf{Q}'_{+}$.
Then, we have the parity decomposition
\begin{align}
\EuScript{Y}^{\circ}_{\ell}(C)
\simeq
\EuScript{Y}^{\circ}_{\ell}(C)_+
\times
\EuScript{Y}^{\circ}_{\ell}(C)_-,
\end{align}
where
$\EuScript{Y}^{\circ}_{\ell}(C)_{\varepsilon}$
($\varepsilon=\pm$)
is the subring of $\EuScript{Y}^{\circ}_{\ell}(C)$
generated by
 $Y^{(a)}_m(u)$,  $1+Y^{(a)}_m(u)$
$((a,m,u):\mathbf{Q}'_{\varepsilon})$.

\subsection{Construction of quiver $Q_{\ell}(C)$}
\label{subsect:const}

Let us construct a quiver $Q_{\ell}(C)$ for $C$
and $\ell$.
We do it in two steps.
In Step 1, to each adjacent pair
$(a,b)$ of the Dynkin diagram $X(C)$
 we attach a certain quiver $Q(a,b)$.
In Step 2, these quivers are `patched' at each vertex.

{\bf Step 1.} $Q(a,b)$.

Recall that $t$ is the one in \eqref{eq:t1}.
Below suppose that $a$ and $b$ are adjacent in $X(C)$
and $d_a\geq d_b$.

{\bf Case (i). $d_a$ is odd  and $d_b=1$.}
(a) The case $a\in I_-$.
We set the quiver $Q(a,b)$ by the quiver $Q_{\ell'}(M_{t'})$
in Sec.~\ref{subsect:quiverodd}
with $t'=d_a$ and $\ell'=t\ell/d_a$.
We assign $+$/$-$ as in Sec.~\ref{subsect:quiverodd}.
(We do not need to assign $\bullet$/$\circ$ here.)
\par
(b) The case $a\in I_+$.
We set the quiver $Q(a,b)$ by the quiver
$Q(1)$ obtained from $Q(0)=Q_{\ell'}(M_{t'})$
in Sec.~\ref{subsect:quiverodd}
with $t'=d_a$ and $\ell'=t\ell/d_a$.
We assign $+$/$-$ in the opposite way to  Sec.~\ref{subsect:quiverodd}.

{\bf Case (ii). $d_a=d_b$, and $d_a$ is odd  and not 1.}
We can assume that $a\in I_-$ and $b\in I_+$.
We set the quiver $Q(a,b)$ as a disjoint union
of quivers $Q_1$,\dots,$Q_{d_a}$ specified as follows.
The quivers $Q_{1}$, $Q_{3}$,\dots, $Q_{d_a}$
are the quiver $Q_{\ell'}(M_{t'})$
in Sec.~\ref{subsect:quiverodd}
with $t'=1$ and $\ell'=t\ell/d_a$.
We assign $+$/$-$ as in Sec.~\ref{subsect:quiverodd}.
The quivers $Q_{2}$, $Q_{4}$,\dots, $Q_{d_a-1}$
are the opposite quiver of $Q_{\ell'}(M_{t'})$
in Sec.~\ref{subsect:quiverodd}
with $t'=1$ and $\ell'=t\ell/d_a$.
We assign $+$/$-$ in the opposite way to Sec.~\ref{subsect:quiverodd}.

{\bf Case (iii). $d_a$ is even  and $d_b=1$.}
(a) The case $a\in I_+$ and $c_a=\alpha$.
We set the quiver $Q(a,b)$ by the quiver $Q_{\ell'}(M_{t'})$
in Sec.~\ref{subsect:quivereven}
with $t'=d_a$ and $\ell'=t\ell/d_a$.
We assign $+$/$-$ as in Sec.~\ref{subsect:quivereven}.
\par
(b) The case $a\in I_+$ and $c_a=\beta$.
We set the quiver $Q(a,b)$ by the quiver $Q(1)$ 
obtained from
 $Q(0)=Q_{\ell'}(M_{t'})$
in Sec.~\ref{subsect:quivereven}
with $t'=d_a$ and $\ell'=t\ell/d_a$.
For  $\bullet$/$\circ$  in Sec.~\ref{subsect:quivereven},
we assign $+$/$-$ to vertices with $\bullet$ 
as in Sec.~\ref{subsect:quivereven},
while we assign $+$/$-$ to vertices with $\circ$
in the opposite way to Sec.~\ref{subsect:quivereven}.
\par
(c) The case $a\in I_-$ and $c_a=\alpha$.
We set the quiver $Q(a,b)$ by the quiver $Q(-1/t')$ obtained from
 $Q(0)=Q_{\ell'}(M_{t'})$
in Sec.~\ref{subsect:quivereven}
with $t'=d_a$ and $\ell'=t\ell/d_a$.
For  $\bullet$/$\circ$  in Sec.~\ref{subsect:quivereven}.
we assign $+$/$-$ to vertices with $\circ$ 
as in Sec.~\ref{subsect:quivereven},
while we assign $+$/$-$ to vertices with $\bullet$
in the opposite way to Sec.~\ref{subsect:quivereven}.

\par
(d) The case $a\in I_-$ and $c_a=\beta$.
We set the quiver $Q(a,b)$ by the quiver $Q((t'-1)/t')$ obtained from
 $Q(0)=Q_{\ell'}(M_{t'})$
in Sec.~\ref{subsect:quivereven}
with $t'=d_a$ and $\ell'=t\ell/d_a$.
We assign $+$/$-$ in the opposite way to Sec.~\ref{subsect:quivereven}.

{\bf Case (iv). $d_a=d_b$, and $d_a$ is even.}
 We can assume that $c_a=\alpha$ and $c_b=\beta$.
We set the quiver $Q(a,b)$ as a disjoint union
of quivers $Q_1$,\dots,$Q_{d_a}$ specified as follows.
The quivers $Q_{1}$, $Q_{3}$,\dots, $Q_{d_a-1}$
are the quiver $Q_{\ell'}(M_{t'})$
in Sec.~\ref{subsect:quiverodd}
with $t'=1$ and $\ell'=t\ell/d_a$.
We assign $+$/$-$ as in Sec.~\ref{subsect:quiverodd}.
The quivers $Q_{2}$, $Q_{4}$,\dots, $Q_{d_a}$
are the opposite quiver of $Q_{\ell'}(M_{t'})$
in Sec.~\ref{subsect:quiverodd}
with $t'=1$ and $\ell'=t\ell/d_a$.
We assign $+$/$-$ in the opposite way to Sec.~\ref{subsect:quiverodd}.

Throughout Step 1,
we regard the left column(s) of $Q(a,b)$
(with length $t\ell/d_a-1$) as {\em attached
to $a$} and the right column(s) of $Q(a,b)$
(with length $t\ell/d_b-1$) as {\em attached to $b$}.

{\bf Step 2.} $Q_{\ell}(C)$.

 The quiver $Q_{\ell}(C)$ is defined by
patching the above quivers $Q(a,b)$ at each vertex.
Namely, fix $a\in I$,
and take all  $b$'s which are adjacent to $a$.
If $d_a=1$,
we identify the  columns  attached to $a$ in $Q(b,a)$
for all $b$.
If $d_a>1$,
for each  $i=1,\dots,d_a$,
we identify the columns  attached to $a$ in 
the $i$th quivers $Q_i$ of $Q(a,b)$ or $Q(b,a)$
(depending on the sign and  color of $a$) for all $b$.
(For Cases (i) and (ii), $Q_i$ appears
in the construction of $Q_{\ell}(M_t)$.)

Some basic examples are given below.

\begin{Example}
\label{ex:1}
The two examples below mostly clarify the situation
involving Cases (i) and (ii).
\par
(1) Let $C$ be the Cartan matrix with the following 
Dynkin diagram.
\begin{align*}

\end{align*}
\end{Example}

\subsection{Mutation sequence}

We set $Q(0)=Q_{\ell}(C)$
and define a periodic sequence of mutations
of quivers
\begin{align}
\label{eq:Qmuttree}
Q(0)\
\displaystyle
\mathop{\longleftrightarrow}^{\mu(0)}
\
\textstyle
 Q(\frac{1}{t})
\
\displaystyle
\mathop{\longleftrightarrow}^{\mu(\frac{1}{t})}
\
\textstyle
 Q(\frac{2}{t})
\
\displaystyle
\mathop{\longleftrightarrow}^{\mu(\frac{2}{t})}
\
\cdots
\
\displaystyle
\mathop{\longleftrightarrow}^{\mu(\frac{2t-1}{t})}
\
 Q(2)=Q(0)
\end{align}
by patching the ones in \eqref{eq:GB2} and \eqref{eq:BB2}.
Let $M_a(k/t)$ ($a\in I$, $k=0,\dots,2t-1$)
be the set of the mutation points of $\mu(k/t)$
 in the columns attached to $a$.
It is defined as follows.
(Below we  use the assignment of $+/-$
specified in Sec.~\ref{subsect:const}.
Also we use the similar notations in
the ones in \eqref{eq:GB2} and \eqref{eq:BB2},
e.g., $\mathbf{I}^{a}_{+,i}$ denotes 
the set of vertices
in the column attached to $a$ 
  of $i$th quiver $Q_i$ 
with property $+$.
)

(i) $d_a$: odd. (cf.~\eqref{eq:GB2})
\begin{align}
\begin{split}
&M_a(0)=\mathbf{I}^{a}_{+,1},\
\textstyle
M_a(\frac{1}{t})=\mathbf{I}^{a}_{+,d_a-1},\
M_a(\frac{2}{t})=\mathbf{I}^{a}_{+,3},
\dots,\\
&
\textstyle
M_a(\frac{2d_a-2}{t})=\mathbf{I}^{a}_{-,2},\
M_a(\frac{2d_a-1}{t})=\mathbf{I}^{a}_{-,d_a},\
M_a(\frac{2d_a}{t})=M_a(0), \dots
\end{split}
\end{align}
In particular, for $d_a=1$,
\begin{align}
\textstyle
M_a(0)=\mathbf{I}^{a}_{+,1},\
M_a(\frac{1}{t})=\mathbf{I}^{a}_{-,1},\
M_a(\frac{2}{t})=M_0(0), \dots
\end{align}

(ii) $d_a$: even, $a\in I_+$ (cf.~\eqref{eq:BB2})
\begin{align}
\label{eq:mut11}
\begin{split}
&
\textstyle
M_a(0)=\mathbf{I}^{a}_{+,1,d_a},\
M_a(\frac{1}{t})=\emptyset,\
M_a(\frac{2}{t})=\mathbf{I}^{a}_{+,3,d_a-2},
\dots,\\
&
\textstyle
M_a(\frac{2d_a-2}{t})=\mathbf{I}^{a}_{-,d_a-1,2},\
M_a(\frac{2d_a-1}{t})=\emptyset,\
M_a(\frac{2d_a}{t})=M_0(0), \dots
\end{split}
\end{align}

(iii) $d_a$: even, $a\in I_-$ (cf.~\eqref{eq:BB2})
\begin{align}
\label{eq:mut12}
\begin{split}
&
\textstyle
M_a(0)=\emptyset,\
M_a(\frac{1}{t})=\mathbf{I}^{a}_{+,1,d_a},\
M_a(\frac{2}{t})=\emptyset,\
M_a(\frac{3}{t})=\mathbf{I}^{a}_{+,3,d_a-2},
\dots,\\
&
\textstyle
M_a(\frac{2d_a-2}{t})=\emptyset,\
M_a(\frac{2d_a-1}{t})=\mathbf{I}^{a}_{-,d_a-1,2},\
M_a(\frac{2d_a}{t})=M_0(0), \dots
\end{split}
\end{align}

\subsection{T-system, Y-system, and cluster algebra}
\label{subsect:treeTY}

Now it is straightforward to repeat the formulation
in Secs.~\ref{sect:todd} and \ref{sect:teven}.
The compatibility of mutations is the only issue,
but it has been already taken care of in the construction
of $Q_{\ell}(C)$ as self-explained in
Examples \ref{ex:1} and \ref{ex:2}.

Let $\mathbf{I}$ be the index set of the quiver
$Q_{\ell}(C)$.
Let $B$ the skew-symmetric matrix corresponding
to $Q_{\ell}(C)$.
Using the sequence \eqref{eq:Qmuttree},
we define cluster variables $x_{\mathbf{i}}(u)$
and coefficients $y_{\mathbf{i}}(u)$ ($\mathbf{i}\in \mathbf{I},
u\in \frac{1}{t}\mathbb{Z}$) as before.
Define the T-subalgebra $\mathcal{A}_T(B,x)$
and Y-subgroup $\mathcal{A}_T(B,y)$
as parallel to
Definitions \ref{def:T-sub} and \ref{def:Y-sub}.

Repeating the same argument as before,
 we obtain the conclusion of this section.
\begin{Theorem}
\label{thm:GYisotree}
Let $C$ be any tamely laced and indecomposable
Cartan matrix whose Dynkin diagram is a tree.
Then, the ring $\EuScript{T}^{\circ}_{\ell}(C)_+$ is isomorphic to
$\mathcal{A}_T(B,x)$. 
The group $\EuScript{Y}^{\circ}_{\ell}(C)_+$ is isomorphic to
$\mathcal{G}_Y(B,y)$. 
\end{Theorem}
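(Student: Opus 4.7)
The plan is to reduce Theorem \ref{thm:GYisotree} to the $|I|=2$ analysis of Sections \ref{sect:todd} and \ref{sect:teven} by exploiting the local nature of the T-system and Y-system relations. Each relation in $\mathbb{T}_\ell(C)$ or $\mathbb{Y}_\ell(C)$ at index $(a,m,u)$ involves only $a$ and the vertices $b$ adjacent to $a$ in $X(C)$, so the verification at each mutation step is a purely local matter attached to an edge of the Dynkin diagram.

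First, I would define bijections $g : \mathcal{I}_{\ell+} \to \{(\mathbf{i}, u) : \mathbf{p}_+\}$ and $g' : \mathcal{I}'_{\ell+} \to \{(\mathbf{i}, u) : \mathbf{p}_+\}$ by patching the bijections of Lemma \ref{lem:gmap} (and its even-$t$ counterpart) along each edge of $X(C)$. The parity conditions $\mathbf{Q}_\pm$ and $\mathbf{Q}'_\pm$ of Sec.~\ref{subsect:parity} were designed so that restricting $\mathbf{Q}_+$ to the subdiagram $\{a,b\}$ of any edge coincides with $\mathbf{P}_+$ for the relevant $M_{\max(d_a,d_b)}$ (up to the color/sign shifts prescribed in Cases (i)--(iv) of Sec.~\ref{subsect:const}). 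This guarantees that the local bijections glue consistently on the columns of $Q_\ell(C)$ shared between edge-quivers $Q(a,b)$ and $Q(a,b')$.

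Next, I would establish the analogues of Lemmas \ref{lem:Gx2} and \ref{lem:Gy2}: namely, for each $(a,m,u)\in \mathcal{I}'_{\ell+}$, the mutation of $x^{(a)}_m(u - d_a/t)$ (resp.\ $y^{(a)}_m(u)$) in the sequence \eqref{eq:Qmuttree} at the corresponding point $(\mathbf{i},u):\mathbf{p}_+$ is given by the T-system \eqref{eq:T1}--\eqref{eq:T2} (resp.\ Y-system \eqref{eq:Y1}--\eqref{eq:Y2}). The exchange relations \eqref{eq:coef} and \eqref{eq:clust} at a vertex $\mathbf{i}$ only involve arrows incident to $\mathbf{i}$, and these arrows live entirely inside the union of the column attached to $a$ and the columns attached to its neighbors $b \sim a$. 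For each such neighbor $b$, the contribution to the exchange was computed in Sec.~\ref{sect:todd} or Sec.~\ref{sect:teven} (depending on the parities of $d_a,d_b$) when applied to the $|I|=2$ subdiagram $\{a,b\}$. Multiplying over all neighbors reproduces $\prod_{b\sim a} T^{(b)}_{d_a m/d_b}(u)$ or $\prod_{b\sim a} S^{(b)}_m(u)$ on the T-side, and analogously on the Y-side, matching \eqref{eq:T1}--\eqref{eq:T2} and \eqref{eq:Y1}--\eqref{eq:Y2} exactly.

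With these lemmas in hand, the proofs of Theorems \ref{thm:GTiso} and \ref{thm:GYiso} carry over verbatim: the maps $\rho : T^{(a)}_m(u) \mapsto [x^{(a)}_m(u)]_{\mathbf{1}}$ and $\rho : Y^{(a)}_m(u) \mapsto y^{(a)}_m(u),\ 1+Y^{(a)}_m(u) \mapsto 1+y^{(a)}_m(u)$ are well-defined homomorphisms, and their inverses $\varphi$ are constructed by assigning initial values at $u_{\mathbf{i}}$ via $g$ (resp.\ $g'$) and extending by induction along the forward and backward mutations in \eqref{eq:Qmuttree}, using the T-system (resp.\ Y-system) on both sides at each step. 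The hard part will be the bookkeeping for the inverse $\varphi$ on the Y-side at non-initial points, which must reproduce the formula from Theorem \ref{thm:GYiso} in the presence of several incident edge-quivers at a single vertex. The main obstacle is verifying the compatibility of the mutation sequence \eqref{eq:Qmuttree} at a vertex $a$ of high degree in $X(C)$: the mutation sets $M_a(p/t)$ described after \eqref{eq:Qmuttree} must agree as subsets of the single column attached to $a$ regardless of which incident edge they are read from, and the combined arrows from distinct neighbors must assemble into the correct product in \eqref{eq:T1}--\eqref{eq:T2}. This reduces to a finite case analysis on the parity of $d_a$, the sign $a \in I_\pm$, and (for even $d_a$) the color $c_a \in \{\alpha,\beta\}$, which the sign/color assignments of Sec.~\ref{subsect:const} were engineered precisely to handle; the four cases are illustrated in Examples \ref{ex:1}--\ref{ex:2}.
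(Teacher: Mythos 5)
Your proposal is correct and follows essentially the same route as the paper: Section~\ref{subsect:treeTY} proves the theorem precisely by repeating the $|I|=2$ formulation of Secs.~\ref{sect:todd} and \ref{sect:teven} edge by edge, with the compatibility of the patched mutation sequence at each vertex being the only genuine issue, handled by the sign/color assignments of Sec.~\ref{subsect:const}. You have correctly identified both the reduction and the key compatibility check, so nothing further is needed.
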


\section{Cluster algebraic formulation: General case}
\label{sect:general}

It is easy to extend Theorem 
\ref{thm:GYisotree} to any 
tamely laced  Cartan matrix
$C$ with suitable modification.
Due to the lack of the space,
we concentrate on describing the construction
of the quiver $Q_{\ell}(C)$.
Throughout the section we assume that
$C$ is a tamely laced and indecomposable
Cartan matrix.

Before starting, we introduce some preliminary definitions.
We call a subdiagram $Y$  of $X(C)$ an {\em even block},
if $Y$ is an maximal indecomposable subdiagram of $X(C)$
such that $d_a$ of each vertex $a$ of $Y$ is {\em even}.
Due to the condition \eqref{eq:Ccond1}, $d_a$ is constant
for any vertex $a$ of $Y$.
Below we suppose that $X(C)$ has
$n$ even blocks $Y_1$,\dots, $Y_n$. ($n$ may be zero.)
Let $X'(C)$ be the diagram obtained from $X(C)$
by shrinking each even block $Y_i$
into a vertex `$\otimes $' while keeping any
line  from $Y_i$ to its outside.
For example, for the following $X(C)$
\begin{align}
\label{eq:diagram1}
\begin{picture}(100,10)(0,0)
\put(0,0){\circle{6}}
\put(20,0){\circle{6}}
\put(40,0){\circle{6}}
\put(60,0){\circle{6}}
\put(80,0){\circle{6}}
\put(100,0){\circle{6}}
\drawline(2,-2)(18,-2)
\drawline(2,2)(18,2)
\drawline(7,0)(13,6)
\drawline(7,0)(13,-6)
\drawline(23,0)(37,0)
\drawline(43,0)(57,0)
\drawline(82,-2)(98,-2)
\drawline(82,2)(98,2)
\drawline(87,6)(93,0)
\drawline(87,-6)(93,0)
\drawline(63,0)(77,0)
\end{picture}
\end{align}
$X'(C)$ is given by
\begin{align}
\label{eq:diagram2}
\begin{picture}(40,10)(0,0)
\put(0,0){\circle{6}}
\put(20,0){\circle{6}}
\put(40,0){\circle{6}}
\drawline(2,-2)(18,-2)
\drawline(2,2)(18,2)
\drawline(7,0)(13,6)
\drawline(7,0)(13,-6)
\drawline(22,2)(18,-2)
\drawline(22,-2)(18,2)
\drawline(22,-2)(38,-2)
\drawline(22,2)(38,2)
\drawline(27,6)(33,0)
\drawline(27,-6)(33,0)
\end{picture}
\end{align}

\subsection{The case $X'(C)$ is bipartite}
\label{subsect:bipartite}

Let us assume that $X'(C)$ is {\em bipartite},
i.e., it contains  no odd cycle.

First, consider the case when all the even blocks $Y_1$, \dots, $Y_n$
are also bipartite.
Then, $X(C)$ admits a decomposition
and a coloring of  $I$ satisfying
Conditions (I)--(III) in Sec.~\ref{subsect:parity},
and one can construct $Q_{\ell}(C)$ as in Sec.~\ref{subsect:const}.

Next, consider the case when
some of the even blocks, say,
 $Y_1$, \dots, $Y_k$ are nonbipartite.
Then, 
$X(C)$ does not admit a coloring of $I$
satisfying
Condition (III) in Sec.~\ref{subsect:parity}.
Following Ref.~\refcite{Kuniba09},
we define the  {\em bipartite double} $Y^{\#}$
of any tamely laced Dynkin diagram $Y$ as follows.
Let $J$ be the vertex set of $Y$.
The vertex set  $J^{\#}$ of $Y^{\#}$ is the disjoint union
$J^{\#}=J_+\sqcup J_-$, where $J_+=\{j_+\mid j\in J\}$
and $J_-=\{j_-\mid j\in J\}$;
furthermore, we write a line (or multiple line with arrow)  in $Y^{\#}$ from
$i_+$ to $j_-$ and also from
$i_-$ to $j_+$ if and only if there is a line (or multiple line with arrow) 
from $i$ to $j$ in $Y$.
Let $\tilde{X}(C)$ be the diagram obtained from 
$X(C)$ by replacing each nonbipartite even block $Y_i$
($i=1,\dots,k$) with its bipartite double $Y_i^{\#}$,
while connecting $i_{\pm}$ in $Y_i$ to any vertex $j$ outside $Y_i$ 
by a line (or  multiple line with arrow) if and only if $i$ and $j$ are
connected in $X(C)$ by a line (or multiple line with arrow).
The diagram $\tilde{X}(C)$ now admits a decomposition
and coloring  satisfying
Conditions (I)--(III) in Sec.~\ref{subsect:parity}.
See Fig.~\ref{fig:double1} for an example.
Then, we repeat the construction of 
the quiver $Q_{\ell}(C)$ in Sec.~\ref{subsect:const}
for the diagram $\tilde{X}(C)$ with the following modification:
{\em In Step 1 of Sec.~\ref{subsect:const},
in Cases (iii) and (iv),
we only take the $d_a/2$ subquivers $Q_1$, $Q_3$,\dots, $Q_{d_a-1}$
for those $Q(a,b)$ involving the vertices of
$Y_1^{\#}$, \dots, $Y_k^{\#}$.}
We write the resulted quiver as $Q_{\ell}(C)$.
Accordingly, we also replace 
$\mathbf{I}^{a}_{+,1,d_a}$, $\mathbf{I}^{a}_{+,3,d_a-2}$,
\dots in \eqref{eq:mut11} and \eqref{eq:mut12}
 with 
$\mathbf{I}^{a}_{+,1}$, $\mathbf{I}^{a}_{+,3}$, \dots.
The rest are defined in the same way as in
Sect.~\ref{subsect:treeTY}.

\begin{figure}[t]
\begin{picture}(180,73)(-70,-34)
%
%
\put(0,0)
{
\put(0,0){\circle{6}}
\put(20,0){\circle{6}}
\put(40,0){\circle{6}}
\put(0,-20){\circle{6}}
\put(40,-20){\circle{6}}
\drawline(27,6)(33,0)
\drawline(27,-6)(33,0)
\drawline(2,-2)(18,-2)
\drawline(2,2)(18,2)
\drawline(7,0)(13,6)
\drawline(7,0)(13,-6)
\drawline(22,-2)(38,-2)
\drawline(22,2)(38,2)
\drawline(3,-20)(37,-20)
\drawline(2,-18)(18,-2)
\drawline(38,-18)(22,-2)
\put(-2,12){\small $1$}
\put(18,12){\small $2$}
\put(38,12){\small $3$}
\put(-2,-33){\small $5$}
\put(38,-33){\small $4$}
%
}
\put(100,0)
{
\put(0,20){\circle{6}}
\put(20,0){\circle{6}}
\put(40,20){\circle{6}}
\put(60,0){\circle{6}}
\put(80,20){\circle{6}}
\put(0,-20){\circle{6}}
\put(40,-20){\circle{6}}
\put(80,-20){\circle{6}}
\drawline(3,20)(37,20)
\drawline(23,1)(39,17)
\drawline(21,3)(37,19)
\drawline(27,7)(27,13)
\drawline(27,7)(33,7)
\drawline(57,1)(41,17)
\drawline(59,3)(43,19)
\drawline(53,7)(53,13)
\drawline(53,7)(47,7)
\drawline(43,-19)(59,-3)
\drawline(41,-17)(57,-1)
\drawline(53,-7)(53,-13)
\drawline(53,-7)(47,-7)
\drawline(37,-19)(21,-3)
\drawline(39,-17)(23,-1)
\drawline(27,-7)(27,-13)
\drawline(27,-7)(33,-7)
\drawline(43,20)(77,20)
\drawline(3,-20)(37,-20)
\drawline(43,-20)(77,-20)
\drawline(0,17)(0,-17)
\drawline(80,17)(80,-17)
\put(-2,34){\small $4_-$}
\put(38,34){\small $2_+$}
\put(78,34){\small $5_-$}
\put(-2,-39){\small $5_+$}
\put(38,-39){\small $2_-$}
\put(78,-39){\small $4_+$}
\put(10,4){\small $1$}
\put(66,4){\small $3$}
\put(9,-5){\small $+$}
\put(65,-5){\small $+$}
\put(-8,25){\small $+,\beta$}
\put(32,25){\small $+,\alpha$}
\put(72,25){\small $+,\beta$}
\put(-8,-29){\small $+,\alpha$}
\put(32,-29){\small $+,\beta$}
\put(72,-29){\small $+,\alpha$}
%
}
\end{picture}
\caption{Example of Dynkin diagram $X(C)$ (left)
and $\tilde{X}(C)$ (right).}
\label{fig:double1}
\end{figure}

Now we have the first main result of the paper.
\begin{Theorem}
\label{thm:GYisobi}
Let $C$ be any tamely laced and indecomposable Cartan matrix
such that $X'(C)$ is bipartite.
Let $B$ the skew-symmetric matrix corresponding
to the quiver $Q_{\ell}(C)$ defined above.
Then, the ring $\EuScript{T}^{\circ}_{\ell}(C)_+$ is isomorphic to
$\mathcal{A}_T(B,x)$. 
The group $\EuScript{Y}^{\circ}_{\ell}(C)_+$ is isomorphic to
$\mathcal{G}_Y(B,y)$. 
\end{Theorem}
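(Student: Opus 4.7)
The plan is to adapt the argument of Section~\ref{sect:tree} essentially verbatim to the quiver $Q_{\ell}(C)$ constructed in Section~\ref{subsect:bipartite}. First I would set up a periodic sequence of mutations on $Q_{\ell}(C)$ analogous to \eqref{eq:Qmuttree}, where the composite mutation $\mu(k/t)$ is the product of local mutations $\mu_{\mathbf{i}}$ for $\mathbf{i}$ ranging over $M_a(k/t)$ for all $a \in I$, with $M_a(k/t)$ defined exactly as in Section~\ref{subsect:treeTY} but with $\mathbf{I}^{a}_{+,1}$, $\mathbf{I}^{a}_{+,3},\ldots$ replacing $\mathbf{I}^{a}_{+,1,d_a}$, $\mathbf{I}^{a}_{+,3,d_a-2},\ldots$ for any $a$ belonging to a nonbipartite even block. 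From this sequence I would define cluster variables $x_{\mathbf{i}}(u)$ and coefficients $y_{\mathbf{i}}(u)$, together with embedding bijections $g : \mathcal{I}_{\ell+} \to \{(\mathbf{i},u):\mathbf{p}_+\}$ and $g' : \mathcal{I}'_{\ell+} \to \{(\mathbf{i},u):\mathbf{p}_+\}$ following the pattern of Lemma~\ref{lem:gmap}; for a vertex $a$ in a nonbipartite even block, the image vertex will lie in the column of $Q_{\ell}(C)$ attached to the appropriate one of $a_+, a_-$ in $\tilde{X}(C)$ dictated by the parity of $tu$.

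Next I would prove the analogues of Lemmas~\ref{lem:Gx2} and~\ref{lem:Gy2}, showing that $x^{(a)}_m(u)$ after trivial evaluation of coefficients reproduces $\mathbb{T}_{\ell}(C)$ and that $y^{(a)}_m(u)$ satisfies $\mathbb{Y}_{\ell}(C)$. Since $Q_{\ell}(C)$ is built by patching the local quivers $Q(a,b)$ at each vertex $a$, the verification of each T- or Y-system relation centered at $a$ reduces to inspecting the local structure of the columns attached to $a$. By construction this local structure coincides with the corresponding row of one of the verified configurations in Sections~\ref{sect:todd}, \ref{sect:teven}, or~\ref{sect:tree}, so the exchange relations \eqref{eq:clust} and \eqref{eq:coef} immediately translate into \eqref{eq:T1}--\eqref{eq:T2} and \eqref{eq:Y1}--\eqref{eq:Y2}, respectively.

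The main obstacle is justifying the bipartite-double trick: when $a$ lies in a nonbipartite even block $Y_i$, the replacement of $Y_i$ by $Y_i^{\#}$ in $\tilde{X}(C)$ doubles the vertex $a$ into $a_+, a_-$, yet the T- and Y-variables remain indexed by the original $a$. I would argue that taking only the $d_a/2$ subquivers $Q_1, Q_3,\ldots,Q_{d_a-1}$ in Cases~(iii)--(iv) of Section~\ref{subsect:const} selects exactly those columns whose $(\circ/\bullet,\pm)$ labels are compatible with the restriction of $\mathbf{Q}_+$ to $a$, while the omitted columns $Q_2, Q_4,\ldots,Q_{d_a}$ would carry the complementary $\mathbf{Q}_-$ half and are redundant for the $+$-subring. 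The delicate verification is that, for a neighbor $b$ of $a$ also lying in the same nonbipartite block, the cluster variable labeled by $b$ in the T-system is consistently recovered from the column of $b_+$ alone; this follows because the arrow structure of $Y_i^{\#}$ together with the halving is precisely designed so that the product over neighbors $b$ in \eqref{eq:T1} picks up one contribution per original neighbor in $Y_i$.

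Once these compatibilities are established, the inverse homomorphisms are constructed exactly as in Theorems~\ref{thm:GTiso} and~\ref{thm:GYiso}: for each $\mathbf{i} \in \mathbf{I}$ I choose the smallest nonnegative (respectively largest nonpositive) $u_{\mathbf{i}}$ with $(\mathbf{i},u_{\mathbf{i}}):\mathbf{p}_+$, use the bijections $g, g'$ to identify the corresponding T- or Y-variable, define $\tilde\varphi$ on the generators $x_{\mathbf{i}}^{\pm 1}$ and $y_{\mathbf{i}}$ accordingly, and propagate by induction on forward and backward mutations using the matching T- and Y-systems on both sides. Restricting $\tilde\varphi$ to the T-subalgebra $\mathcal{A}_T(B,x)$ and the Y-subgroup $\mathcal{G}_Y(B,y)$ then yields the two-sided inverses of the natural maps $\rho$, completing both isomorphisms.
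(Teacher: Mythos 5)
Your proposal follows essentially the same route as the paper: the paper's own argument for this theorem is precisely to repeat the formulation of Sections 4--5 on the patched quiver $Q_{\ell}(C)$, verify the local exchange relations against the already-checked two-vertex configurations, and build the inverse homomorphisms by induction on forward and backward mutations as in Theorems \ref{thm:GTiso} and \ref{thm:GYiso}. Your additional discussion of why taking only the $d_a/2$ subquivers $Q_1,Q_3,\dots,Q_{d_a-1}$ for nonbipartite even blocks correctly halves the bipartite double is consistent with the paper's construction (note only that in $Y_i^{\#}$ the neighbors of $a_+$ are the $b_-$'s, so the single contribution per original neighbor comes from the cross-pairing rather than from $b_+$ alone), and does not change the substance of the argument.
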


\subsection{The case $X'(C)$ is nonbipartite}

\begin{figure}[t]
\begin{picture}(200,62)(-60,-40)
%
%
\put(0,0)
{
\put(0,0){\circle{6}}
\put(20,0){\circle{6}}
\put(40,0){\circle{6}}
\put(60,0){\circle{6}}
\put(0,-20){\circle{6}}
\put(60,-20){\circle{6}}
\drawline(47,6)(53,0)
\drawline(47,-6)(53,0)
\drawline(2,-2)(18,-2)
\drawline(2,2)(18,2)
\drawline(7,0)(13,6)
\drawline(7,0)(13,-6)
\drawline(23,0)(37,0)
\drawline(42,-2)(58,-2)
\drawline(42,2)(58,2)
\drawline(3,-20)(57,-20)
\drawline(0,-3)(0,-17)
\drawline(60,-3)(60,-17)
\put(-2,12){\small $1$}
\put(18,12){\small $2$}
\put(38,12){\small $3$}
\put(58,12){\small $4$}
\put(-2,-33){\small $6$}
\put(58,-33){\small $5$}
%
}
\put(100,0)
{
\put(0,0){\circle{6}}
\put(20,0){\circle{6}}
\put(40,0){\circle{6}}
\put(60,0){\circle{6}}
\put(80,0){\circle{6}}
\put(100,0){\circle{6}}
\put(0,-20){\circle{6}}
\put(20,-20){\circle{6}}
\put(40,-20){\circle{6}}
\put(60,-20){\circle{6}}
\put(80,-20){\circle{6}}
\put(100,-20){\circle{6}}
\drawline(3,0)(17,0)
\drawline(67,6)(73,0)
\drawline(67,-6)(73,0)
\drawline(22,-2)(38,-2)
\drawline(22,2)(38,2)
\drawline(27,0)(33,6)
\drawline(27,0)(33,-6)
\drawline(43,0)(57,0)
\drawline(62,-2)(78,-2)
\drawline(62,2)(78,2)
\drawline(83,0)(97,0)
\drawline(3,-20)(17,-20)
\drawline(67,-14)(73,-20)
\drawline(67,-26)(73,-20)
\drawline(22,-22)(38,-22)
\drawline(22,-18)(38,-18)
\drawline(27,-20)(33,-14)
\drawline(27,-20)(33,-26)
\drawline(43,-20)(57,-20)
\drawline(62,-22)(78,-22)
\drawline(62,-18)(78,-18)
\drawline(83,-20)(97,-20)
\drawline(0,-3)(0,-17)
\drawline(100,-3)(100,-17)
\put(-2,17){\small $6_-$}
\put(18,17){\small $1_+$}
\put(38,17){\small $2$}
\put(58,17){\small $3$}
\put(78,17){\small $4+$}
\put(98,17){\small $5_-$}
\put(-2,-43){\small $5_+$}
\put(18,-43){\small $4_-$}
\put(38,-43){\small $3$}
\put(58,-43){\small $2$}
\put(78,-43){\small $1_-$}
\put(98,-43){\small $6_+$}
\put(-3,9){\small $-$}
\put(17,9){\small $+$}
\put(32,9){\small $+,\alpha$}
\put(52,9){\small $+,\beta$}
\put(77,9){\small $+$}
\put(97,9){\small $-$}
\put(-3,-33){\small $+$}
\put(17,-33){\small $-$}
\put(32,-33){\small $-,\alpha$}
\put(52,-33){\small $-,\beta$}
\put(77,-33){\small $-$}
\put(97,-33){\small $+$}
%
}
\end{picture}
\caption{Example of Dynkin diagram $X(C)$ (left)
and $\tilde{X}(C)$ (right).}
\label{fig:double}
\end{figure}

Let us assume that  $X'(C)$ is {\em nonbipartite}.
Then,  $X(C)$ does not admit a decomposition of
$I$ satisfying
Conditions (I) and (II) in Sec.~\ref{subsect:parity};
consequently,
neither $\EuScript{T}^{\circ}_{\ell}(C)$
nor $\EuScript{Y}^{\circ}_{\ell}(C)$
admits the parity decomposition.

First, consider the case when all the even blocks $Y_1$, \dots, $Y_n$
of $X(C)$ are bipartite.
We take the bipartite double  $X'(C)^{\#}$ 
of $X'(C)$.
Then, in  $X'(C)^{\#}$,
 restore each even block of $X(C)$, which appears
twice in $X'(C)^{\#}$,  in place of $\otimes$.
The resulted diagram $\tilde{X}(C)$ now admits a decomposition
and a coloring satisfying
Conditions (I)---(III) in Sec.~\ref{subsect:parity}.
See Fig.~\ref{fig:double} for an example.
Now we repeat the construction of the quiver
$Q_{\ell}(C)$ in Sec.~\ref{subsect:const}
for the diagram $\tilde{X}(C)$.
We write the resulted quiver as $Q_{\ell}(C)$.

Next, consider the case when some of the even blocks
of $X(C)$,
say, $Y_1$, \dots, $Y_k$ are nonbipartite.
Then, in the above construction of 
$\tilde{X}(C)$, we further replace
each nonbipartite even block $Y_i$ ($i=1,\dots,k$)
with its bipartite double $Y_i^{\#}$
as in Sec.~\ref{subsect:bipartite}.
We write the resulted diagram as $\tilde{X}(C)$.
Then, repeat the construction of the quiver $Q_{\ell}({C})$
in Sec.~\ref{subsect:bipartite}
for the diagram $\tilde{X}(C)$.
We write the resulted quiver as $Q_{\ell}(C)$.

The rest are defined in the same way as before.
Then, as in the simply laced case \cite{Kuniba09},
we have the counterpart of Theorem \ref{thm:GYisobi},
which is the second main result of the paper.

\begin{Theorem}
\label{thm:GYisononbi}
Let $C$ be any tamely laced and indecomposable Cartan matrix
such that $X'(C)$ is nonbipartite.
Let $B$ the skew-symmetric matrix corresponding
to the quiver $Q_{\ell}(C)$ defined above.
Then, the ring $\EuScript{T}^{\circ}_{\ell}(C)$ is isomorphic to
$\mathcal{A}_T(B,x)$. 
The group $\EuScript{Y}^{\circ}_{\ell}(C)$ is isomorphic to
$\mathcal{G}_Y(B,y)$. 
\end{Theorem}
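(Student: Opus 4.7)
The plan is to reduce the nonbipartite case to the bipartite one via the bipartite double $\tilde{X}(C)$, exactly as was done for the simply laced case in Ref.~\refcite{Kuniba09}. Since $X'(C)$ is nonbipartite, the decomposition $I=I_+\sqcup I_-$ of Sec.~\ref{subsect:parity} does not exist, so there is no parity decomposition of $\EuScript{T}^{\circ}_{\ell}(C)$ and $\EuScript{Y}^{\circ}_{\ell}(C)$. However, the vertex set $\tilde{I}$ of $\tilde{X}(C)$ does admit such a decomposition (together with the $\alpha/\beta$ coloring on even blocks), so the construction of Sec.~\ref{sect:tree}, as modified in Sec.~\ref{subsect:bipartite}, applies verbatim to produce the quiver $Q_\ell(C)$ together with a periodic mutation sequence of the form \eqref{eq:Qmuttree}.

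The key observation will be that the two copies $a_+, a_-\in \tilde{I}$ of each vertex $a\in I$ (appearing in the bipartite double $X'(C)^{\#}$) together account for \emph{all} triples $(a,m,u)\in \mathcal{I}_\ell$, not just those satisfying one parity condition. More precisely, I will define an assignment map $g$ from $\mathcal{I}_\ell$ (resp. $\mathcal{I}'_\ell$ for the Y case) onto the set of forward (resp. specified) mutation points of \eqref{eq:Qmuttree}, by sending $(a,m,u)$ to a vertex in the column attached to $a_+$ or $a_-$ according to the parity of $m+tu$ (in the odd block case) or $tu$ (in the even block case). When $X'(C)$ is bipartite this map would decompose into two separate bijections onto the $\mathbf{p}_+$ and $\mathbf{p}_-$ subsets, but in the nonbipartite case the doubling is essential for $g$ to be well-defined as a single bijection.

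Given this setup, I will then check, by direct inspection of the exchange relations \eqref{eq:clust} and \eqref{eq:coef} at each mutation in the sequence, that the cluster variables $x_{\mathbf{i}}(u)$ and coefficients $y_{\mathbf{i}}(u)$ satisfy precisely the T-system $\mathbb{T}_\ell(C)$ and Y-system $\mathbb{Y}_\ell(C)$ after relabeling via $g$ and $g'$. This is the exact analogue of Lemmas \ref{lem:Gx2} and \ref{lem:Gy2}, and works because the local structure of $Q_\ell(C)$ near each column attached to a vertex $a$ is, by construction, the same as one of the building-block quivers $Q_\ell(M_t)$ of Secs.~\ref{sect:todd}--\ref{sect:teven} patched at its boundaries, and the exchange relations depend only on this local structure. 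The definition of the T-subalgebra $\mathcal{A}_T(B,x)$ and Y-subgroup $\mathcal{G}_Y(B,y)$ associated with \eqref{eq:Qmuttree} is as in Definitions \ref{def:T-sub} and \ref{def:Y-sub}. To finish, I construct the inverse homomorphisms exactly as in the proofs of Theorems \ref{thm:GTiso} and \ref{thm:GYiso}: define $\tilde\varphi$ on the initial cluster/coefficient variables using the bijection $g$ at the smallest admissible $u_{\mathbf{i}}$, then extend by induction on forward and backward mutations, applying the T- and Y-system relations on both sides.

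The main obstacle is verifying that the patching of building-block quivers at the duplicated vertices $a_\pm$ of the bipartite double is consistent with the mutation sequence, i.e., that the local mutation patterns \eqref{eq:GB2} and \eqref{eq:BB2} glue correctly around a nonbipartite cycle of $X'(C)$. When $X'(C)$ is bipartite, a single traversal of each column reproduces the original triple $(a,m,u)$ after a full period; in the nonbipartite case, after one period of \eqref{eq:Qmuttree} a triple associated to $a_+$ must match, under the T/Y-system evolution, one associated to $a_-$, and the whole data closes up only after taking the bipartite double into account. Checking this gluing case by case — in particular when a nonbipartite cycle of $X'(C)$ passes through vertices with different $d_a$ — will be the only nontrivial combinatorial point, but it proceeds in parallel to the simply laced argument in Ref.~\refcite{Kuniba09} and introduces no genuinely new difficulty beyond that already handled in Sec.~\ref{subsect:bipartite}.
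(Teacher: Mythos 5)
Your proposal follows essentially the same route as the paper: construct $Q_{\ell}(C)$ from the bipartite double $\tilde{X}(C)$, observe that the two copies $a_{\pm}$ of each vertex absorb both parity classes so that the full ring $\EuScript{T}^{\circ}_{\ell}(C)$ (rather than a parity component) is realized, and then transplant the arguments of Lemmas \ref{lem:Gx2}, \ref{lem:Gy2} and Theorems \ref{thm:GTiso}, \ref{thm:GYiso}, exactly as the paper does by reference to the simply laced case of Ref.~\refcite{Kuniba09}. Your outline is if anything more explicit than the paper's own one-line justification, and correctly identifies the gluing of local mutation patterns around odd cycles of $X'(C)$ as the only point requiring verification.
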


\section*{Acknowledgments}
It is my great pleasure to thank
Professor Tetsuji Miwa
on the occasion of his sixtieth birthday
 for his generous
support 
 and continuous interest in my works
through many years.
I thank Rei Inoue, Osamu Iyama, Bernhard Keller,
Atsuo Kuniba, and Junji Suzuki
for sharing their insights in the preceding joint works.


\end{document}